\newlist{steps}{enumerate}{1}
\setlist[steps, 1]{label = Step \arabic*:}
\theoremstyle{plain}
\newtheorem*{theorem*}{Theorem}
\newtheorem{theorem}{Theorem}[subsection]
\newtheorem{definition}[theorem]{Definition}
\newtheorem{proposition}[theorem]{Proposition}
\newtheorem{remark}[theorem]{Remark}
\newtheorem{corollary}[theorem]{Corollary}
\newtheorem{exmp}[theorem]{Example}
\newtheorem{lemma}[theorem]{Lemma}
\newtheorem{conjecture}[theorem]{Conjecture}
\numberwithin{equation}{subsection}
\title{A Groupoid Construction of Functional Integrals: Brownian Motion and Some TQFTs}
\author{Joshua Lackman\footnote{josh@pku.edu.cn} }
\date{}
\begin{document}

\maketitle
\begin{abstract}
\noindent We formalize Feynman's construction of the quantum mechanical path integral. To do this, we shift the emphasis in differential geometry from the tangent bundle onto the pair groupoid. This allows us to use the van Est map and the piecewise linear structure of manifolds to develop a coordinate-free, partition of unity-free approach to integration of differential forms, etc. This framework makes sense for any sigma model valued in a Lie algebroid. We apply it to define the Wiener measure, stochastic integrals and other observables in a coordinate-free way. We use it to reconstruct Chern-Simons with finite gauge group and to obtain some non-perturbative deformation quantizations via the Poisson sigma model on a disk.
\end{abstract}
\tableofcontents
\section{Introduction and Overview}
\subsection{Feynman's Construction of Path Integrals}
Given a Lagrangian $\mathcal{L}(\mathbf{x},\dot{\mathbf{x}})=\frac{1}{2}m\dot{\mathbf{x}}^2-V(\mathbf{x}),$ the quantum mechanical amplitude of a particle initially located at position $x_i$ at time $t_i$ to be measured at position $x_f$ at time $t_f$ is given by
\begin{equation}
    \langle x_f,t_f| x_i,t_i\rangle=\int_{\begin{subarray}{l}\{\mathbf{x}:[t_i,t_f]\to \mathbb{R}:\,\mathbf{x}(t_i)=x_i,\mathbf{x}(t_f)=x_f\end{subarray}\}}\mathcal{D}\mathbf{x}\,e^{\frac{i}{\hbar}\int_{t_i}^{t_f}\mathcal{L}\,d\mathbf{t}}\,\;.
\end{equation}
Given an initial wave function $\psi_i=\psi_i(x)$ at time $0,$ we can use this amplitude to determine that the wave function at time $t$ is given by
\begin{equation}\label{wave}
  \psi(x,t)=\int_{\begin{subarray}{l}\{\mathbf{x}:[0,t]\to \mathbb{R}:\,\mathbf{x}(0)=x\end{subarray}\}}\mathcal{D}\mathbf{x}\,e^{\frac{i}{\hbar}\int_{0}^{t}\mathcal{L}\,d\mathbf{t}}\psi_i(\mathbf{x}(t))\;\,.
\end{equation}
A measure theoretic approach to rigorously defining this formula is given by formally replacing $t$ with $it,$ in which case we obtain an expectation value of a random variable with respect to the Wiener measure. We then analytically continue in time to obtain the original path integral (ie. we perform a Wick rotation).
\\\\Feynman, on the other hand, constructed \cref{wave} as a limit of a sequence of approximations involving sums which are similar to Riemann sums, as follows (we simplify to the case that $t=1$)\footnote{Up to a finite normalization constant depending on $V,$ this is the correct formula.}:
\begin{align}\label{approx}
& \nonumber\psi(x,1)\\&=\lim\limits_{N\to\infty}\sqrt{\frac{mN}{2\pi i\hbar}}\int_{-\infty}^{\infty}\prod_{n=1}^{N}dx_n\,\exp{\bigg[\frac{1}{N}\frac{i}{\hbar}\sum_{n=1}^N \frac{m}{2}N^2(x_n-x_{n-1})^2-V(x_n)\bigg]}\psi_i(x_N)\;\,,
\end{align}
where $x_0=x$ and all integration variables are integrated over $(-\infty,\infty).$ 
\\\\In this paper we will formalize this approach, in particular we will provide an answer to the question: what is 
\begin{equation}\label{sum?}
    \frac{1}{N}\sum_{n=1}^N N^2(x_n-x_{n-1})^2-V(x_n)\;?
\end{equation}
This sum is similar to (though distinct from) a Riemann sum, but a priori we don't even have Riemann sums on manifolds, they are defined in a local and coordinate dependent way. What can we replace Riemann sums and \ref{sum?} by when the target is a Riemannian manifold $(M,g)$ or a more general sigma model?   The situation is even more confusing when the target space is more generally a Lie algebroid, eg. the Poisson sigma model. 
\\\\Of course, constructions analogous to Feynman's have been made in more general contexts (lattice QFT, etc.), but we want to be precise.
\\\\To resolve this, we define a notion of \textit{generalized} Riemann sums, which are a coordinate-free notion of Riemann sums. These will be an important part of our construction.
\subsection{The Idea}
Let's briefly explain our approach to understanding \ref{sum?}. To obtain \ref{approx} from \ref{wave} we triangulate $[0,1]$ with $0=t_0<t_1<\ldots<t_N=1$ and let $x_n=\mathbf{x}(t_n).$ We now make the following observation: we have an identification 
\begin{equation}\label{homs}
    \{(x_0,x_1,\ldots,x_{N}): x_0,x_1,\ldots,x_{N}\in\mathbb{R}\}\cong \textup{Hom}(\Delta_{[0,1]},\textup{Pair}\,\mathbb{R})\;.
    \end{equation}
   On the left side is the domain of integration in \ref{approx} and on the right side is the set of morphisms between our triangulation of $[0,1]$ and the pair groupoid of $\mathbb{R}$ (more precisely, $\Delta_{[0,1]}$ is the simplicial set associated to the triangulation).  
   \\\\Traditionally, when doing calculus on manifolds attention is given exclusively to the tangent bundle. Motivated by \ref{homs}, we are going to shift emphasis to the (local) pair groupoid, which integrates the tangent bundle. A large part of this paper is devoted to formalizing calculus on manifolds from the perspective of the pair groupoid, as is needed for our approach to the functional integral.
   \\\\Our main tool used to relate these two perspectives on calculus is a graded version of the van Est map. Originally discovered in the context of Lie groups, one of its first applications was a simpler proof of Lie's third theorem (see \cite{van est}). It was later extended to Lie groupoids by Weinstein–Xu in the context of geometric quantization (\cite{weinstein1}). We develop a graded version to relate cochain data on the pair groupoid (more generally on a Lie groupoid) to data on the tangent bundle (more generally on a Lie algebroid), in a more precise way.\footnote{Actually, there should be a far more general version of the van Est map that applies to higher groupoids, aspects of this are discussed in \cite{Lackman2}.}
   \\\\The pair groupoid is a much simpler geometric structure than the tangent bundle. In particular, the groupoid version of the exterior derivative and differential forms, which we call completely antisymmetric cochains, are much easier to define. There are also completely symmetric cochains, these are groupoid versions of measures and densities — though we will see that they are general enough to even describe the Euler characteristic.
   \subsubsection{The Pair Groupoid's Role in Integration}
   The pair groupoid is a structure which bridges the smooth and simplicial worlds of manifolds:\footnote{In fact, the pair groupoid is a finite dimensional quotient of the singular simpicial set of the manifold.}
   \begin{equation}
\centering\begin{tikzcd}
     & \textup{Pair}\, M \arrow[rd, "VE_{\bullet}"] \arrow[ld, "i^*"'] &   \\
\Delta_M &                                         & TM
\end{tikzcd}
\end{equation}
Here, $\Delta_M$ is the abstract simplicial complex of a triangulation $|\Delta_M|$ of $M,$ and $i:\Delta_M\to\textup{Pair}\,\textup{M}$ is the inclusion map\footnote{We get this map after picking a total ordering of the vertices, though we really only need to pick an orientation.} We can use this map to pull back cochain data defined on $\textup{Pair}\,\textup{M}$ to cochain data on $\Delta_M.$
\\\\The map $VE_{\bullet}$ is a graded version of the van Est map, it computes the Taylor expansion\footnote{More correctly, it computes the jet.} of cochain data on $\textup{Pair}\,\textup{M}$ at the identity bisection, along the source fibers; the result is cochain data on the tangent bundle $TM.$ In degree $0$ it agrees with the map defined by Weinstein–Xu, commonly denoted $VE$ (up to antisymmetrization).
\\\\We obtain a generalized Riemann sum of a top form $\omega$ on $M$ by lifting it to an $A_{n+1}$-invariant cochain $\Omega$ on $\textup{Pair}\,\textup{M}$ and computing 
\begin{equation}\label{exr}
    \sum_{\Delta\in \Delta_M}\iota^*\Omega(\Delta)\;.
\end{equation}
We then take a limit over triangulations to get $\int_M\omega\,.$ If $\Omega$ is a cocycle then $\ref{exr}$ is the cap product of $\iota^*\Omega$ with the fundamental class of $M,$ in which case \ref{exr} is exactly equal to $\int_M\omega\,,$ for any triangulation. This is what occurs in the fundamental theorem of calculus.
\\\\As a simple example of this, let $\omega=f\,dx$ on $\mathbb{R}.$ The map
\begin{equation}\label{coch1}
    (x,y)\mapsto f(x)(y-x)
\end{equation}
defines a $1$-cochain $\Omega$ on $\textup{Pair}\,[0,1]$ such that $VE_0(\Omega)=f\,dx\,.$ The abstract simplicial complex $\Delta_{[0,1]}$ is determined by points $0=x_0<x_1<\ldots<x_{n}=1.$ The generalized Riemann sum \ref{exr} is then the left hand Riemann sum
\begin{equation}
    \sum_{i=0}^{n-1}f(x_i)(x_{i+1}-x_i)\;.
\end{equation}
As an example, from our perspective the cochain
\begin{equation}
    (x,y)\mapsto f(x)(y-x)+g(y)(y-x)^2
\end{equation}
is just as good as \ref{coch1}, for any smooth $g$ (since they have the same first order Taylor expansions at $y=x$) and therefore may be used to give the generalized Riemann sum
\begin{equation}
    \sum_{i=0}^{n-1}f(x_i)(x_{i+1}-x_i)+g(x_{i+1})(x_{i+1}-x_i)^2\;.
\end{equation}
These both equal $\int_0^1f\,dx$ in the limit over triangulations of $[0,1].$ On the other hand, if $dF=f\,dx,$ then using the cocycle 
\begin{equation}
    (x,y)\mapsto F(y)-F(x)
\end{equation}
in the generalized Riemann sum produces the exact value of the integral.
\\\\It is easy to see how the Riemann-Stieltjes integral $\int_0^1 f\,dg$ fits into this: $(x,y)\mapsto f(x)(g(y)-g(x))$ is a $1$-cochain on the pair groupoid. We will use this fact when discussing stochastic integrals and to generalize the Riemann-Stieltjes integral to manifolds.
\subsubsection{The Fundamental Theorem of Calculus on Manifolds}
We will use these concepts to state a generalization of the fundamental theorem of calculus on manifolds, for which part 1 says the following:
\begin{theorem*}
Let $M$ be an $n$-dimensional manifold and let $(\Omega_M,\Omega_{\partial M})\in H^{n}\textup{Pair}\,(M,\partial M)_{\textup{loc}}\,.$ Then
\begin{equation}\label{fundi}
[M]\frown (\Omega_M,\Omega_{\partial M})=\int_M VE(\Omega_M)-\int_{\partial M}VE(\Omega_{\partial M})\;.
\end{equation}
\end{theorem*}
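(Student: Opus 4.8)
The plan is to express the cap product as a generalized Riemann sum over a triangulation, observe that this sum is unchanged under refinement because we are pairing a cocycle with the fundamental cycle, and then compute it in a fine limit, where it becomes the right-hand side of \eqref{fundi}. First I would fix a cocycle representative $(\Omega_M,\Omega_{\partial M})$ of the given class --- using locality, one defined on a neighborhood of the diagonal, and compactly supported if $M$ is noncompact so the integrals converge --- together with an ordered triangulation $\Delta_M$ of $M$ whose restriction to $\partial M$ is an ordered triangulation $\Delta_{\partial M}$, the vertex orderings chosen to agree so that $\iota$ is a simplicial map of pairs $(\Delta_M,\Delta_{\partial M})\to(\textup{Pair}\,M,\textup{Pair}\,\partial M)$. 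Since the pullback $\iota^{*}$ is a chain map, $(\iota^{*}\Omega_M,\iota^{*}\Omega_{\partial M})$ is a relative simplicial cocycle, and in the relative (mapping cone) complex its evaluation on the relative fundamental cycle $c_M$ --- whose boundary $\partial c_M$ is the fundamental cycle $c_{\partial M}$ of $\Delta_{\partial M}$ with the induced orientation --- equals
\begin{equation}
[M]\frown(\Omega_M,\Omega_{\partial M})=\sum_{\substack{\Delta\in\Delta_M\\ \dim\Delta=n}}\iota^{*}\Omega_M(\Delta)\;-\;\sum_{\substack{\sigma\in\Delta_{\partial M}\\ \dim\sigma=n-1}}\iota^{*}\Omega_{\partial M}(\sigma)\,,
\end{equation}
that is, the generalized Riemann sum \eqref{exr} for $\Omega_M$ corrected by the analogous one for $\Omega_{\partial M}$ along $\partial M$.

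Next I would show the right-hand side does not depend on the triangulation, by checking invariance under elementary refinements that add a single new vertex. For a vertex $w$ in the interior of a top simplex $\Delta=\langle v_0,\dots,v_n\rangle$ one has, as chains in the simplicial set of $\textup{Pair}\,M$, $\Delta-C_w(\partial\Delta)=\partial\langle w,v_0,\dots,v_n\rangle$, where $C_w(\partial\Delta)=\sum_i(-1)^i\langle w,v_0,\dots,\widehat{v_i},\dots,v_n\rangle$ is the (oriented) subdivision of $\Delta$ and the filling $(n+1)$-simplex has all vertices near the diagonal. Since $\Omega_M$ is a cocycle, $\iota^{*}\Omega_M$ annihilates this boundary, so $\iota^{*}\Omega_M(\Delta)=\sum_i(-1)^i\iota^{*}\Omega_M(\langle w,v_0,\dots,\widehat{v_i},\dots,v_n\rangle)$ and the Riemann sum is unchanged; the lower-dimensional analogue handles vertices added on $\partial M$ (with $\Omega_{\partial M}$) and on interior faces. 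As any two triangulations have a common refinement, the expression above is constant along refining sequences of uniformly fat triangulations --- equivalently it is the honest cap-product pairing, which is the mechanism behind the statement in the text that a cocycle yields the exact value of the integral for any triangulation.

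Then I would identify this constant with the limit over fine triangulations. Since $VE$ returns the leading antisymmetric $n$-jet of $\Omega_M$ along the diagonal, a Taylor expansion --- matched against the volume $1/n!$ of the standard $n$-simplex --- gives $\iota^{*}\Omega_M(\Delta)=\int_{\Delta}VE(\Omega_M)+O(\varepsilon^{n+1})$ uniformly over a fat triangulation of mesh $\varepsilon$, with the constant controlled by a $C^{n+1}$-bound on $\Omega_M$ near the diagonal. A fat triangulation has $O(\varepsilon^{-n})$ top simplices, so $\sum_{\Delta}\iota^{*}\Omega_M(\Delta)=\int_M VE(\Omega_M)+O(\varepsilon)\to\int_M VE(\Omega_M)$, and likewise $\sum_{\sigma}\iota^{*}\Omega_{\partial M}(\sigma)\to\int_{\partial M}VE(\Omega_{\partial M})$. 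Combined with the previous step, the left-hand side of \eqref{fundi} equals this limit, which is its right-hand side.

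I expect the main obstacle to be this last estimate: the uniform $C^{n+1}$ Taylor control together with the combinatorics of fat triangulations that force the accumulated error to be $o(1)$ --- this is exactly the convergence of generalized Riemann sums to $\int_M VE(\,\cdot\,)$, and it is where the admissibility hypotheses on triangulations are really used. A secondary point requiring care is the orientation bookkeeping in the first step, so that the boundary term enters with a minus sign (i.e. $\partial c_M=c_{\partial M}$ for the induced orientation on $\partial M$); granting that, independence of the right-hand side of \eqref{fundi} from the chosen representative is automatic, being equivalent --- via Stokes' theorem and the fact that $VE$ is a chain map --- to the well-definedness of the cap product on the left.
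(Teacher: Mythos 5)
Your proposal is correct and shares the paper's overall skeleton --- first establish that the relative generalized Riemann sum is independent of the triangulation, then invoke the convergence result (Part 0, \cref{int}) to identify the constant value with $\int_M VE(\Omega_M)-\int_{\partial M}VE(\Omega_{\partial M})$ --- but you carry out the invariance step by a genuinely different mechanism. You check invariance under elementary stellar subdivisions directly: the starred simplex differs from its subdivision by the boundary of the filling $(n+1)$-simplex $\langle w,v_0,\ldots,v_n\rangle$ in the nerve of $\textup{Pair}\,\textup{M}_{\textup{loc}}$, which the cocycle annihilates. The paper instead pulls $(\Omega_M,\Omega_{\partial M})$ back to $\textup{Pair}\,\textup{M}_{\textup{loc}}\times\textup{Pair}\,[0,1]$, extends the two given triangulations of $M$ to a triangulation of the cylinder $M\times[0,1]$, and applies the triangulated Stokes' theorem (\cref{stok}). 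Your route is more elementary and makes the role of the cocycle condition transparent; the cylinder argument buys slightly more, since it compares two arbitrary triangulations without requiring them to be connected by a chain of stellar moves or to admit a common refinement. This matters because the paper is careful (see \cref{equiv}) that two smooth triangulations need not have a common linear subdivision, so your clause ``any two triangulations have a common refinement'' should be read as holding only within an equivalence class; your argument then pins down the constant on each equivalence class, and the independence-of-equivalence-class assertion already built into \cref{int} closes the remaining gap. The Taylor estimate you flag as the main obstacle is exactly the content of \cref{int} and \cref{asymptoticsn}, so nothing further is needed there.
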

Here, $ H^{n}\textup{Pair}\,(M,\partial M)_{\textup{loc}}$ is the cohomology of the local pair groupoid, relative to its boundary; $(\Omega_M,\Omega_{\partial M})$ pulls back to a class in $H^n(M,{\partial M};\mathbb{R})$ via the projection of a simplex onto its set of vertices – on the left side of \ref{fundi} we have its cap product with the fundamental class. This result contains Stokes' theorem as a special case.
\\\\As a very simple example of the above, consider the cocycle $\Omega(e^{i\theta_0},e^{i\theta_1})=\theta_1-\theta_0$ on the local pair groupoid of $S^1.$ We have that $VE(\Omega)=d\theta,$ and of course both sides of \cref{fundi} equal $2\pi.$
\\\\Our generalization will be used to prove \cref{funco}, which is the main result we have to justify our approach to functional integration. In particular, we will use this corollary to show that our construction of functional integrals reproduces Dijkgraaf–Witten's and Freed–Quinn's partition function for Chern-Simons for finite gauge groups (\cite{witten}, \cite{freed}), in which case the target is the classifying space $BG.$
\subsubsection{Graded van Est Map}
To briefly explain why a graded version of the van Est map is necessary: it is because of regularity issues. If the paths in $C([0,1],\mathbb{R})$ were generically Lipschitz continuous then using a graded version wouldn't be necessary. However, in Brownian motion and Feynman's path integral, the paths have H\"{o}lder exponent in $(1/3,1/2)$ on a set of full measure. Thus, we have to compute higher order Taylor expansions. 
\\\\The difference in Taylor expansions at second order is what leads to the different stochastic integrals, eg. the It\^{o} and Stratonovich integrals, which classically are the same but differ with respect to Wiener space (ie. the It\^{o} integral uses a left hand Riemann sum and the Stratonovich integral uses the average of the left and right hand Riemann sums). This leads to ambiguities when defining the path integral.
\subsubsection{Approximating Functional Integrals and Examples}
The framework we develop can be applied to functional integrals arising from any classical field theory valued in a tangent bundle (or Lie algebroid), though we won't discuss general convergence results or renormalization.  Our main justification for this approach is \cref{funco}. The idea is to
\begin{enumerate}
    \item triangulate the domain, 
    \item integrate the tangent bundle (or Lie algebroid) to the local pair groupoid (or Lie groupoid),
    \item approximate the maps between the domain and target by morphisms of simplicial sets,
    \item integrate the cochain data via $VE_{\bullet}\,,$
    \item form the generalized Riemann sums,
    \item define a measure on the hom spaces by using available data (eg. a Riemannian metric, symplectic form, Haar measure),
    \item compute the approximations of the functional integral,
    \item take a limit over triangulations.\footnote{We will discuss viewing the limit as a kind of improper integral.}
\end{enumerate}    
In particular, we will discuss examples obtained from the Poisson sigma model construction of Kontsevich's deformation quantization formula (\cite{catt}, \cite{kontsevich}). In this case, there is overlap with the data required to construct a geometric quantization in the sense of Hawkins and Weinstein (\cite{eli}, \cite{weinstein}); both require a symplectic groupoid and a cocycle. However, we don't need polarizations or connections, or any ad hoc quantization maps. We will also discuss some other topological quantum field theories.
\\\\We expect that, in general, our construction of the path integral will converge to the correct quantity when the domain is a one dimensional manifold and the target is a compact Riemannian manifold — finite dimensional approximations to Wiener's path integral in this context were obtained by Andersson and Driver in \cite{lars} (also see \cite{samp}), but we won't go into this level of generality in this paper.
\begin{comment}
\\\\We are going to show that we can mathematically formalize this construction using groupoids and the van Est map. In fact, we can construct the usual integral of an $n$-form $\omega$ on an $n$-dimensional manifold $M$ using this more general framework for constructing functional integrals, since for example 
\begin{equation}
    \int_M \omega=\int_{{{\begin{subarray}{l}\{f\in C(M,M)\end{subarray}\}}}}\Big[\int_Mf^*\omega\Big]\,\delta_{\{f=\text{identity}\}}\;.\end{equation}
Here the functional is
\begin{equation}
    f\in  C(M,M)\mapsto\int_{M}f^*\omega
\end{equation}
and the measure on the space of paths $ C(M,M)$ is the Dirac measure supported at the identity map.
\end{comment}
\subsection{Outline of Paper}
In the introduction we gave an overview of the paper. In the appendix we have included most of the basic Lie groupoid theory which is relevant to this paper. The examples relevant to Brownian motion are the simplest ones: the pair groupoid and the tangent bundle. 
\\\\We will begin by stating the main result and outlining the idea.
In next several sections we will be formulating basic differential geometry using the pair groupoid and comparing it with standard treatment of differential geometry on the tangent bundle. We will begin by defining the analogue of differential forms and measures, followed by a generalization of the van Est map (but still not the fully general one). We will then define generalized Riemann sums and state the fundamental theorem of calculus on manifolds. After this, we will discuss observables, followed by the graded van Est map. We will then define a  Riemann-Stieltjes integral on manifolds and prove a convergence theorem. \\\\Finally, we will further discuss the groupoid construction of some functional integrals in addition to Feynman's and Wiener's. In particular, these include Chern-Simons for finite groups and the Poisson sigma model on the disk. For the latter, we are mostly concerned with its relation to Kontsevich's solution of the formal deformation quantization problem of Poisson manifolds and obtaining non-perturbative deformation quantizations.
\\\\Sections marked with $*$ contain important information but are not mandatory on a first reading for a basic understanding of the framework. We have included an index of notation at the end, see \ref{a4}.
\begin{comment}The rest of the paper will be dedicated to understanding the concepts used in the main result and developing the ideas. We will begin by defining cochains and the generalized van Est map, followed by generalized Riemann sums. We will simultaneously be stating what the constructions reduce to in the case of Brownian motion. At the end of the paper we will discuss Kontsevich's star product and the Poisson sigma model.
\end{comment}
\subsubsection{Convention for Wedge Products}\label{warning}
The convention we use for wedge products is that we don't multiply by factorials after antisymmetrizing, eg. we define
\begin{equation}
    dx\wedge dy=\frac{1}{2}(dx\otimes dy-dy\otimes dx)\;.
\end{equation}
We do this because we partition domains of integration into simplices, not parallelpipeds, and with this definition $dx^1\wedge\cdots\wedge dx^n$ gives the volume of the standard $n$-simplex when evaluated on $(\partial_{x^1},\ldots,\partial_{x^n}).$ With respect to integration, it's still true that
\begin{equation}
    \int_M dx^1\wedge\cdots\wedge dx^n=\int_M dx^1\cdots dx^n\;,
\end{equation}
so we have 
\begin{equation}
     \int_{\Delta^n} dx^1\wedge\cdots\wedge dx^n=\frac{1}{n!}\;.
\end{equation}
\subsection*{Acknowledgements}
I'd like to thank David Pechersky for discussions about Brownian motion.
\section{Main Result}
Here we state and prove the main result, We will go on to explain how we can interpret the Wiener space as a direct limit of probability spaces.
\subsection{Statement and Proof}
We will be assuming standard facts about Brownian motion, for a textbook treatment see \cite{rick}, \cite{peter}. We will state a simple version of the result, we aren't optimizing. In addition, we expect it to generalize to the case where a magnetic potential is added to the action (or any stochastic integral), and to the case where the target is a compact Riemannian manifold.
\\\\In the following, $VE{_\bullet}$ is a generalization of the van Est map whose image contains cochains defined on a Lie algebroid, but not just antisymmetric ones. In addition, $(\mathbb{R},dx^2)$ is standard Euclidean space; $dt^2$ is the standard metric on $[0,1];$ $\,\textup{Pair}^{(k)}(\mathbb{R})=\mathbb{R}^{k+1}$ is the manifold in degree $k$ of the nerve of the pair groupoid.
\begin{theorem}\label{main}
Let $V:\mathbb{R}\to\mathbb{R}$ be a smooth function which is bounded from below, let $f$ be a smooth, bounded $k$-cochain on $\textup{Pair}\,\mathbb{R}$ and let $t_{i_0}\le t_{i_1}\le\cdots \le t_{i_k}\in [0,1].$ Define an observable by
\begin{equation}
      f_{t_{i_0},\ldots,t_{i_k}}:C([0,1],\,\mathbb{R})\to\mathbb{R},\, \;\;f_{t_{i_0},\ldots,t_{i_k}}(\mathbf{x})=f(\mathbf{x}(t_{i_0}),\ldots,\mathbf{x}(t_{i_k}))\,.
\end{equation}
Then
\begin{align}\label{maint}
   &\nonumber\mathbb{E}_{\mu_W}[e^{-\int_{0}^1 V(\mathbf{x}(t))\,dt}\,f_{t_{i_0},\ldots,t_{i_k}}] 
   \\&=\lim\limits_{\Delta_{[0,1]}\in\mathcal{T}_{[0,1]}}\int_{\begin{subarray}{l}\{\mathbf{x}\in\textup{hom}(\Delta_{[0,1]},\, U_{\Delta }):\, \mathbf{x}(0)=0\end{subarray}\}}\mathcal{D}\mathbf{x}\exp^{-S[\mathbf{x}]}f_{t_{i_0},\ldots,t_{i_k}}(\mathbf{x})\,\;,
\end{align}
where $\mu_W$ is the Wiener measure. 
\\\\The limit is taken in the sense of nets, over all triangulations $\Delta_{[0,1]}\in\mathcal{T}_{[0,1]}$ which have $t_{i_0},\ldots,t_{i_k}$ as vertices. Here, $U_{\Delta }$ depends on the triangulation. Letting $0=t_0<t_1<\ldots t_n=1$ denote its vertices and letting $\Delta t=\textup{sup}_i|t_{i+1}-t_i|\,,$ we have that
\begin{equation}
    U_{\Delta }= \big\{(x_0,x_1)\in \textup{Pair}\,\mathbb{R}:\;|x_{1}-x_0|<2\sqrt{\Delta t|\log{\Delta t}|}\,\big\}\;.\footnote{See remark \ref{loca}.}
\end{equation}
\end{theorem}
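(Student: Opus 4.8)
The plan is to reduce the right-hand side of \eqref{maint} to a sequence of finite-dimensional integrals indexed by triangulations, recognize each such integral as an expectation with respect to a Gaussian (pinned-random-walk) measure, and then invoke standard Brownian-motion convergence together with the Feynman–Kac formula. First I would unwind the definitions: a morphism of simplicial sets $\mathbf{x}\in\textup{hom}(\Delta_{[0,1]},U_\Delta)$ with $\mathbf{x}(0)=0$ is, by the identification \eqref{homs}, nothing but a tuple $(x_0=0,x_1,\ldots,x_n)$ with consecutive increments constrained to $|x_{i+1}-x_i|<2\sqrt{\Delta t|\log\Delta t|}$; the cochain data defining $\mathcal{D}\mathbf{x}$ and $S[\mathbf{x}]$ come from lifting the Euclidean structure on $(\mathbb{R},dx^2)$ and $dt^2$ to $A_{\bullet}$-invariant cochains on $\textup{Pair}\,\mathbb{R}$ and applying $VE_{\bullet}$, as in the generalized-Riemann-sum recipe \eqref{exr}. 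I would check that this produces exactly the Feynman-type discretization: the kinetic cochain $VE$-lifts to $(x,y)\mapsto \tfrac{m}{2}\,\frac{(y-x)^2}{\Delta t}$ on each edge, so $\exp(-S[\mathbf{x}])$ contributes $\prod_i \exp(-\tfrac{1}{2\Delta t}(x_{i+1}-x_i)^2)$ (after normalizing $m=\hbar=1$), the potential cochain contributes $\exp(-\sum_i V(x_i)\Delta t)$ as a generalized Riemann sum for $\int_0^1 V(\mathbf{x})\,dt$, and $\mathcal{D}\mathbf{x}=\prod_i (2\pi\Delta t)^{-1/2}dx_i$ is the density making the free part a probability measure. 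Thus each term in the net equals $\mathbb{E}_{\nu_\Delta}\big[e^{-\sum_i V(x_i)\Delta t}\,f(x_{i_0},\ldots,x_{i_k})\big]$, where $\nu_\Delta$ is the law of the Gaussian random walk with variance $\Delta t$ per step started at $0$ — equivalently, the law of Brownian motion sampled at the vertices of $\Delta_{[0,1]}$.

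Next I would pass to the limit over the net $\mathcal{T}_{[0,1]}$. The key point is that, for a fixed triangulation, $\nu_\Delta$ is the finite-dimensional marginal of Wiener measure $\mu_W$ at the times $t_0<\cdots<t_n$; the cutoff set $U_\Delta$ is where the truncation $|x_{i+1}-x_i|<2\sqrt{\Delta t|\log\Delta t|}$ matters. Since the modulus of continuity of Brownian motion satisfies Lévy's theorem — almost surely $\limsup_{\Delta t\to 0}\sup_{|s-t|\le \Delta t}|\mathbf{x}(s)-\mathbf{x}(t)|/\sqrt{2\Delta t\log(1/\Delta t)}=1$ — the probability that a Brownian path violates the constraint on some edge of $\Delta_{[0,1]}$ tends to $0$ along the net (this is where the precise constant $2$ in $U_\Delta$ is used: it is safely above the Lévy constant $\sqrt 2$). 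Hence replacing $\nu_\Delta$ by the unconstrained Gaussian marginal changes each expectation by an error that vanishes in the limit, using boundedness of $f$ and of $e^{-\int V}$ (here $V$ bounded below is exactly what makes $e^{-\int_0^1 V(\mathbf{x})\,dt}$ a bounded observable, with uniform bound independent of the path). It remains to show $\mathbb{E}_{\mu_W|_{t_0,\ldots,t_n}}\big[e^{-\sum_i V(x_i)\Delta t} f(x_{i_0},\ldots,x_{i_k})\big]\to\mathbb{E}_{\mu_W}\big[e^{-\int_0^1 V(\mathbf{x})\,dt}f_{t_{i_0},\ldots,t_{i_k}}\big]$: the Riemann sum $\sum_i V(\mathbf{x}(t_i))\Delta t\to\int_0^1 V(\mathbf{x}(t))\,dt$ pointwise on the (full-measure) set of continuous paths along the net, since $V$ is continuous and the mesh goes to $0$; dominated convergence (domination by a constant, using the lower bound on $V$ and boundedness of $f$) then gives the claim, with $f_{t_{i_0},\ldots,t_{i_k}}$ already a fixed cylinder function because $t_{i_0},\ldots,t_{i_k}$ are required to be vertices of every triangulation in the net.

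The main obstacle, and the step I would spend the most care on, is the interchange of the cutoff removal with the limit: one must make the estimate on $\mathbb{P}(\exists\, \text{edge}: |x_{i+1}-x_i|\ge 2\sqrt{\Delta t|\log\Delta t|})$ quantitative and uniform enough to survive the net limit, not merely the sequential limit. Concretely I would bound this probability by $n\cdot\mathbb{P}(|Z|\ge 2\sqrt{|\log\Delta t|})$ for a standard Gaussian $Z$ with $n\le 1/\Delta t_{\min}$; the Gaussian tail gives $\mathbb{P}(|Z|\ge 2\sqrt{|\log\Delta t|})\lesssim \Delta t^{2}$, so the union bound is $O(n\Delta t^2)=O(\Delta t)$ when the triangulation is quasi-uniform, and for general triangulations in the net one argues edge-by-edge with the per-edge mesh, or restricts (cofinally) to the sub-net of quasi-uniform triangulations, which suffices to compute a net limit when it exists. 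A secondary technical point is verifying that $VE_{\bullet}$ applied to the lifted kinetic and potential cochains yields exactly the stated discretized action with no anomalous higher-order corrections surviving the limit — by the graded van Est discussion in the excerpt, second-order Taylor terms are precisely what distinguishes Itô from Stratonovich, but since the kinetic cochain $(x,y)\mapsto\tfrac12(y-x)^2/\Delta t$ is already exactly quadratic and $V$ enters only through a $0$-cochain evaluated at vertices, no such ambiguity arises here; I would state this as a lemma and defer the general discussion to the later sections on the graded van Est map.
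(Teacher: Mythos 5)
Your skeleton --- identify each approximant with a Wiener expectation of a cylinder function via the finite-dimensional distributions, use L\'{e}vy's modulus of continuity to dispose of the cutoff $U_{\Delta}$, and finish with dominated convergence --- is the same as the paper's. But there is a genuine gap: you have silently replaced the theorem's hypotheses by the special Feynman choice of cochain data. The action $S[\mathbf{x}]$ in \ref{actions} is built from \emph{arbitrary} cochains $\mathcal{G}_{dx},\mathcal{G}_{dt},\mathcal{V}$ subject only to $VE_3(\mathcal{G}_{dx})=dx^2,$ $VE_1(\mathcal{G}_{dt})=dt,$ $VE_{-1}(\mathcal{V})=V$ and boundedness of the fifth, third and first derivatives along the source fibers; it is \emph{not} in general equal to $\sum_i\tfrac12(x_{i+1}-x_i)^2/\Delta t_i+V(x_i)\Delta t_i.$ Your closing remark that the kinetic cochain ``is already exactly quadratic \ldots so no such ambiguity arises'' is exactly the point at which the content of the theorem is lost: the assertion is that the limit is insensitive to the choice of lift, and the analytic core of the paper's proof is the Taylor expansion \ref{mydat} of the general data along the source fibers together with the verification that the remainder terms (of the schematic form $h_{dx}\,\Delta x_i^3/\Delta t_i,$ $h_{V}\,\Delta x_i\,\Delta t_i,$ etc.) contribute nothing in the limit. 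This is precisely why $U_{\Delta}$ is calibrated to the L\'{e}vy modulus $2\sqrt{\Delta t|\log\Delta t|}$: on the cutoff set one has $|\Delta x_i|^3/\Delta t_i=O\big(\Delta t_i^{1/2}|\log\Delta t_i|^{3/2}\big),$ so the correction factor $e^{-U}$ (with $U$ the difference between the generalized action and the exact Gaussian energy) is bounded there and converges almost surely to $e^{-\int_0^1 V(\mathbf{x}(t))\,dt}.$ Without this step you have proved the classical Feynman--Kac statement, not Theorem \ref{main}.

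A secondary issue: your removal of the cutoff by a union bound over edges, followed by restriction to a ``cofinal sub-net of quasi-uniform triangulations,'' is circular as stated --- a net limit may be computed along a cofinal subnet only once its existence is known, which is part of what is to be proved. The paper avoids this by keeping the indicator $\mathbbm{1}_{A}$ inside the Wiener expectation as part of a single cylinder random variable $F(\mathbf{x}(t_1),\ldots,\mathbf{x}(t_n))$ and observing that, by L\'{e}vy's modulus of continuity, this random variable converges almost surely to $e^{-\int_0^1 V(\mathbf{x}(t))\,dt}f_{t_{i_0},\ldots,t_{i_k}}(\mathbf{x})$ along the net, so that one application of dominated convergence handles the cutoff, the Riemann sum for $\int_0^1 V,$ and the Taylor remainders simultaneously.
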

$\,$\\We describe the meaning of \ref{maint} below: consider the smooth data $\mathcal{G}_{dx},\mathcal{G}_{dt},\mathcal{V},$ where
\begin{enumerate}\label{datas}
    \item $\mathcal{G}_{dx},\, \mathcal{V}$ are $1$-cochains on $\textup{Pair}\,\mathbb{R}\,,$
    \item $\mathcal{G}_{dt}$ is a $1$-cochain on $\textup{Pair}\,[0,1]\,,$
\end{enumerate}
such that
\begin{enumerate}
    \item $VE{_3}(\mathcal{G}_{dx})=dx^2,$\footnote{ie. $dx$ defines a pointwise linear map, and $dx^2=(dx)^2$ is its pointwise square.}
    \item $VE{_1}(\mathcal{G}_{dt})=dt\,,$
    \item $VE_{-1}(\mathcal{V})=V$ (ie. $\mathcal{V}\vert_{\mathbb{R}}=V$),

\end{enumerate}
such that the fifth, third, and first (covariant) derivatives of $\mathcal{G}_{dx},\, \mathcal{G}_{dt},\,\mathcal{V}$ along the source fibers are bounded, respectively. 
\\\\Let $\mathcal{T}_{[0,1]}$ be the set of triangulations of $[0,1],$ partially ordered by common refinement; denote its elements by $\Delta_{[0,1]}$ and denote by $\Delta^1$ the non-degenerate $1$-simplices in $\Delta_{[0,1]}.$ Then, for \begin{equation*}
    \mathbf{x}\in\textup{hom}(\Delta_{[0,1]}, \textup{Pair}\,\mathbb{R})\,,
    \end{equation*}
    we define
\begin{equation}\label{actions}
    S[\mathbf{x}]= \sum_{\Delta^1\in [0,1]_{\Delta} }\frac{1}{2}\frac{\mathcal{G}_{dx}(\mathbf{x}(\Delta^1))}{\mathcal{G}_{dt}(\Delta^1)}+\mathcal{V}(\mathbf{x}(\Delta^1))\mathcal{G}_{dt}(\Delta^1)\,\;.
\end{equation}
\\\\Here, $\mathcal{D}\mathbf{x}$ is a measure which depends on $\Delta_{[0,1]}$ and is defined as follows: let $\Delta_{[0,1]}$ be a triangulation with $m+1$ vertices. Then 
\begin{align}
    &\nonumber\{\mathbf{x}\in\textup{hom}(\Delta_{[0,1]},\textup{Pair}\,\mathbb{R}):\,\mathbf{x}(0)=0\}
    \\&=\{(x_0,\ldots,x_m)\in\textup{Pair}^{(m)}(\mathbb{R}):x_0=0\}=\mathbb{R}^{m}
\end{align}
and $\mathcal{D}\mathbf{x}$ is then the product measure induced by the metric $dx^2.$
\\\\Finally, $f_{t_{i_0},\ldots,t_{i_k}}(\mathbf{x})=f(\mathbf{x}(t_{i_0}),\ldots,\mathbf{x}(t_{i_k})).$
\\
\begin{remark}\label{loca}
This is a local pair groupoid. In a sense, we're doing the opposite of what is traditionally done: normally the entire pair groupoid is used as the target, whereas we are really only using its germ near the identity bisection.\footnote{Of course, the germ is much more similar to the tangent bundle than the whole groupoid is.} Of course, we can always absorb this dependency into the measure on the corresponding hom space.
\\\\There are other prescriptions for the target we can use. However, shrinking the neighborhoods with the triangulation in some appropriate way is the most natural thing to do, especially when considering that Feynman's construction contains discontinuous maps, see chapter 7.1 of \cite{rick}. We need to at least use the local groupoid so that the approximations converge.
\end{remark}
\begin{proof}(\cref{main})
\begin{comment}
From the standard formula for the finite dimensional approximations, we have that
\begin{align}\label{finieapp}
    &\nonumber\mathbb{E}_{\mu_W}[e^{-\int_{0}^1 V(\mathbf{x}(t))\,dt }\,f_{t_1,\ldots,t_k}]
    \\&=\lim\limits_{n\to\infty}\int_{\mathbb{R}^n}\prod_{i=0}^{n-1}\frac{d\mathbf{x}_i}{\sqrt{2\pi\Delta t_i}}\,e^{-\sum_{i=0}^{n-1} \mathcal{L}(\mathbf{x}_i,\mathbf{x}_{i+1})}\,f(\mathbf{x}(t_{i_0}),\ldots,\mathbf{x}(t_{i_k}))\;\,,
\end{align}
where
\begin{equation}
    \mathcal{L}(\mathbf{x}_{i},\mathbf{x}_{i+1})=\frac{1}{2}\frac{(\mathbf{x}_{i+1}-\mathbf{x}_{i})^2}{\Delta t_i}+V(\mathbf{x}_{i})\Delta t_i\;.
\end{equation}
\end{comment}
Let $F:\mathbb{R}^n\to\mathbb{R}$ be bounded and Borel measurable, let $0< t_{1}\le \cdots\le t_{n}\le 1$ and consider the map 
\begin{equation}
    \mathbf{x}\mapsto F_{t_{1},\ldots,{t_{n}}}(\mathbf{x})= F(\mathbf{x}(t_{1}),\ldots,\mathbf{x}(t_{n}))\;.
\end{equation}
Using the finite dimensional distributions of the Wiener process,\footnote{See \cite{lars}, \cite{peter}.}we have that 
\begin{align}\label{find}
&\nonumber\mathbb{E}_{\mu_W}[ F_{t_{1},\ldots,{t_{n}}}]=
\\&\frac{1}{\sqrt{2\pi(t_{{j+1}}-t_{{j}}})}\int_{-\infty}^{\infty}\prod_{j=1}^{n}dx_j\,\exp\bigg[-\frac{1}{2}\sum_{j=1}^{n-1}\frac{(x_{j+1}-x_j)^2}{(t_{{j+1}}-t_{{j}})}\bigg]F(x_1,\ldots,x_n)\;.
\end{align}
$\,$\\Now, let $0=t_0<t_1<\cdots<t_n=1$ be a triangulation of $[0,1]$ containing $i_0,\ldots,i_k$ as vertices and let 
\begin{equation}
    \Delta x_i=x_{i+1}-x_i,\;\; \;\Delta t_i=t_{i+1}-t_i\;.
    \end{equation}
Computing \ref{actions} with the cochain data \ref{datas} gives 
\begin{align}\label{mydat}
    &\sum_{i=0}^{n-1}\frac{1}{2}\frac{\Delta x_i^2}{\Delta t_i}\frac{1+h_{dx}(x_i,x_{i+1})\Delta x_i^3}{1+h_{dt}(t_i,t_{i+1})\Delta t_i^2} 
    \\&\nonumber +V(x_{i})(\Delta t_i+h_{dt}(t_i,t_{i+1})\Delta t_i^3)+h_{V}(x_i,x_{i+1})\Delta x_i(\Delta t_i+h_{dt}(t_i,t_{i+1})\Delta t_i^3)
\end{align}
where $h_{dx}(x_i,x_{i+1}),\, h_{dt}(t_i,t_{i+1}),\, h_{V}(x_i,x_{i+1})$ are the remainders derived from using Taylor's theorem on $\mathcal{G}_{dx}, \mathcal{G}_{dt}, \mathcal{V}$ along the source fibers (and are bounded by assumption). 
\\\\Now consider \ref{find} with the random variable
\begin{equation}\label{rand}
    F(x_1,\ldots,x_n)= \mathbbm{1}_{A}(x_1,\ldots,x_n)\textbf{}e^{-U(x_1,\ldots,x_n)} f(x_{i_0},\ldots,x_{i_k})\;,
\end{equation}
where
\begin{align}
& A=\left\{ |x_{j+1}-x_j|<2\sqrt{(t_{j+1}-t_j)|\log{(t_{j+1}-t_j)|}}\,,\;0\le j\le n-1\right\}\;,
     \\\nonumber\\ &\nonumber U(x_1,\ldots,x_n)= \sum_{j=0}^{n-1}\frac{1}{2}\frac{\mathcal{G}_{dx}(x_j,x_{j+1})}{\mathcal{G}_{dt}(t_j,t_{j+1})}+\mathcal{V}(x_j,x_{j+1})\mathcal{G}_{dt}(t_j,t_{j+1})- \sum_{j=0}^{n-1}\frac{1}{2}\frac{(x_{j+1}-x_{j})^2}{t_{j+1}-t_j}\;.
\end{align}
Using \ref{mydat} together with L\'{e}vy's modulus of continuity, ie. Wiener paths almost surely satisfy 
\begin{equation}
   |\mathbf{x}(t')-\mathbf{x}(t)|<2\sqrt{(t'-t)|\log{(t'-t)|}}
    \end{equation}
     for all small enough $t'-t>0,$\footnote{See \cite{peter}.} we see that 
     \begin{equation}
         \mathbf{x}\mapsto F(\mathbf{x}(t_1),\ldots,\mathbf{x}(t_n))
         \end{equation}
    converges almost surely to
\begin{equation*}
    \mathbf{x}\mapsto e^{-\int_{0}^1 V(\mathbf{x}(t))\,dt}\,f_{t_{i_0},\ldots,t_{i_k}}(\mathbf{x})\;.
\end{equation*}
The result now follows by applying the dominated convergence theorem. 
     \end{proof}
     
\subsection{*Sigma Algebras and Approximations}\label{impropb}
Here we will discuss $\sigma$-algebras and the idea of approximating the space of continuous maps by a hom space between simplicial spaces. This isn't necessary for understanding the main theorem (as the $*$ indicates), but it does motivate the idea. We will discuss this more generally in \cref{imrop}.
\\\\The $\sigma$-algebra associated to the Wiener measure is the set of continuous maps $C([0,1],\mathbb{R})$ equipped with the Borel $\sigma$-algebra $\mathcal{B}$ generated by the compact-open topology, ie. the topology generated by the sup norm.
\\\\The set of triangulations $\mathcal{T}_{[0,1]}$ of $[0,1]$ form a directed set ordered by refinement. For each triangulation $\Delta_{[0,1]}\in \mathcal{T}_{[0,1]}$  we have a sub-$\sigma$-algebra $\mathcal{B}_{\Delta_{[0,1]}}\subset\mathcal{B}$ defined as the finest sub-$\sigma$-algebra of $\mathcal{B}$ for which homotopy classes of maps relative to the vertices of $\Delta_{[0,1]}$ are atoms\footnote{A measurable set in a $\sigma$-algebra is an atom if it contains no proper nonempty measurable sets.} 
\\\\Explicitly, $B\in \mathcal{B}_{\Delta_{[0,1]}}$ if and only if $B\in\mathcal{B}$ and $f, g$ being homotopic relative to the vertices of $\Delta_{[0,1]}$ implies that they are either both in $B$ or neither are. This is the $\sigma$-algbera generated by pointwise evaluation at the vertices of $\Delta_{[0,1]}.$ This $\sigma$-algebra is precisely the Borel $\sigma$-algebra of continuous maps 
\begin{equation}
    \textup{Hom}(\Delta_{[0,1]},\textup{Pair}\,\mathbb{R})\;.
    \end{equation}
This is similar to a filtration of $\sigma$-algebras, except that we don't have a total order.
\\\\The $\sigma$-algebras $\{\mathcal{B}_{\Delta_{[0,1]}}\}_{\Delta_{[0,1]}\in \mathcal{T}_{[0,1]}}$ approximate $\mathcal{B}.$ Indeed, 
\begin{equation}  
\Delta_{[0,1]}\le\Delta_{[0,1]}' \implies \mathcal{B}_{\Delta_{[0,1]}}\subset \mathcal{B}_{\Delta_{[0,1]}'}
\end{equation}
and we have a direct system in the category of $\sigma$-algebras on $C([0,1],\mathbb{R}),$ with morphisms the inclusions. The direct limit\footnote{This is equivalent to an inverse system and inverse limit in the category of $\sigma$-algebras on $C([0,1],\mathbb{R}),$ with the only allowable morphism the identity map.} is  $\mathcal{B}.$
\\\\The idea is to now equip each $\sigma$-algebra $\mathcal{B}_{\Delta_{[0,1]}}$ with a measure $\mu_{\Delta_{[0,1]}}$ which approximates the Wiener measure $\mu_W$ on $\mathcal{B}_{\Delta_{[0,1]}}.$ In particular, we want the following: if $B\in \mathcal{B}_{\Delta_{[0,1]}}$ then
\begin{equation}\label{cond}
       \lim\limits_{\Delta_{[0,1]}'\ge \Delta_{[0,1]}}  \mu_{\Delta_{[0,1]}'}(B)=\mu_W(B)\;.
\end{equation}
where the limit is taken in the sense of nets. 
\subsubsection{*Connection With Feynman's Construction}
We are going to discuss \ref{main} with Feynman's choice of cochain data. For brevity we assume $V=0.$ 
\\\\We can choose $\mathcal{G}_{dt}, \mathcal{G}_{dx}$ in \cref{main} to be given by  
\begin{equation}\label{feynd}
    \mathcal{G}_{dt}(t_0,t_1)=(t_1-t_0)^2,\, \;\;\mathcal{G}_{dx}(x,y)=(y-x)^2\;.
\end{equation}
Now, choose a triangulation $\Delta_{[0,1]}$ of $[0,1]$ with vertices $0=t_0<t_1<\ldots<t_n=1.$ Then given
\begin{equation}
    \mathbf{x}\in \text{hom}(\Delta_{[0,1]},\textup{Pair}\,\mathbb{R}),\,\;\;\mathbf{x}(t_i,t_{i+1})=(x_i,x_{i+1}),\;0\le i\le n-1,
\end{equation}\cref{actions} is equal to
\begin{equation}
S[\mathbf{x}]=\sum_{0\le i\le n-1 }\frac{1}{2}\frac{(x_{i+1}-x_i)^2}{t_{i+1}-t_i}\;,
\end{equation}
which should remind the reader of 
\begin{equation}
    S[\mathbf{x}]=\int_0^1 \frac{1}{2}\Big(\frac{d\mathbf{x}}{dt}\Big)^2\,dt\;.
\end{equation}
\subsubsection{*The Wiener Measure as a Direct Limit of Approximations}
Using the choices in \ref{feynd}, the approximation (referred to in \cref{cond}) to the probability density of a Brownian particle to travel from point $x$ at time $t_0$ to point $y$ at time $t_1$ is given by
\begin{equation}\label{densi}
   p(x,t_0;y,t_1)= \frac{1}{\sqrt{2\pi(t_1-t_0)}}e^{-\frac{1}{2}\frac{(y-x)^2}{t_1-t_0}}.
\end{equation}
This approximation is actually exact. This quantity has the special property that 
\begin{equation}\label{conv}
     p(x,t_0;z,t_2)=\int_{-\infty}^{\infty}p(x,t_0;y,t_1)p(y,t_1;z,t_2)\,dy\,.
\end{equation}
The integral above is the convolution of $p(\cdot, t_0;\cdot, t_1)$ with $p(\cdot, t_1;\cdot, t_2),$ with respect to $\text{Pair}\,\mathbb{R}.$ 
\\\\For each $x,t_0,t_1$ we have a measure $\mu_{[t_0,t_1]}$ obtained by integrating $y$ in \ref{densi} over subsets of $\mathbb{R}.$ Given a triangulation $\Delta_{[0,1]}$ of $[0,1]$ this induces a measure $\mu_{\Delta_[0,1]}$ on $\mathcal{B}_{\Delta_{[0,1]}},$ referred to in \cref{impropb}. This is the usual measure in the finite dimensional approximations of the path integral.
\\\\We can formalize taking the limit as follows: consider the subcategory 
\begin{equation}
    \text{Pair}_{\le}\,[0,1]\xhookrightarrow{} \text{Pair}\,[0,1]
    \end{equation}
consisting only of oriented arrows, ie. $(t_0,t_1)$ with $t_0\le t_1.$ For a Lie groupoid $G\rightrightarrows X,$ let $\mathcal{M}_s(G)$ be the space of Borel probability measures along the source fibers. There is a projection map
\begin{equation}
    \mathcal{M}_s(G)\to X\;,
\end{equation}
and sections $\Gamma(\mathcal{M}_s(G))$ of this map form a monoid with respect to convolution, ie. convolution is associative and has identity.
\\\\\Cref{conv} then implies that, for these choices of $\mathcal{G}_{dt},\,\mathcal{G}_{dx},$ the induced map
\begin{equation}
    \mu_{[\cdot,\cdot]}:\text{Pair}_{\le}([0,1])\to\Gamma(\mathcal{M}_s(\text{Pair}(\mathbb{R})))
\end{equation}
is a $\Gamma(\mathcal{M}_s(\text{Pair}(\mathbb{R})))$-valued cocycle, ie.
\begin{equation}\label{coc}
    \mu_{[t_0,t_2]}=\mu_{[t_0,t_1]}\ast \mu_{[t_1,t_2]}\,.
\end{equation}
It is this property\footnote{This property is closely related to the semigroup property of the heat kernel.} that makes the probabilities determined by the approximation exact. That is, letting $\mu_{\Delta_{[0,1]}}$ denote the measure on $\mathcal{B}_{\Delta_{[0,1]}}$ induced by $\mu_{[\cdot,\cdot]},$ we have that
\begin{equation}
  \mu_{\Delta_{[0,1]}}=\mu_W
\end{equation}
on $\mathcal{B}_{\Delta_{[0,1]}}$ (see \ref{cond}). Therefore, with these choices the limit in \ref{cond} can be taken in a categorical sense, ie. we have the following:
\begin{proposition}\label{direct}
Let $\mathcal{G}_{dx},\mathcal{G}_{dt}$ be as in \ref{datas}. Suppose that the induced map $\mu_{[\cdot,\cdot]}$ is a cocycle in the sense of \ref{coc}.
Then the Wiener space is the direct limit of the probability spaces \begin{equation}
\{(C([0,1],\mathbb{R}),\mathcal{B}_{\Delta_{[0,1]}},\mu_{\Delta_{[0,1]}})\}_{\Delta_{[0,1]}\in\mathcal{T}_{[0,1]}}\,\;.
\end{equation}
\end{proposition}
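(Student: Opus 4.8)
The plan is to unwind what the direct limit of a directed system of probability spaces means in this context, and then check the two things that such a limit requires: that the underlying measurable spaces form a direct system whose colimit is $(C([0,1],\mathbb{R}),\mathcal{B})$, and that the measures $\mu_{\Delta_{[0,1]}}$ are compatible in the sense needed for them to glue to a measure on the colimit. The first part is essentially already done in \cref{impropb}: for $\Delta_{[0,1]}\le \Delta_{[0,1]}'$ the inclusion $\mathcal{B}_{\Delta_{[0,1]}}\subset\mathcal{B}_{\Delta_{[0,1]}'}$ exhibits a direct system in the category of sub-$\sigma$-algebras of $\mathcal{B}$ on the fixed set $C([0,1],\mathbb{R})$, and the union $\bigcup_{\Delta_{[0,1]}}\mathcal{B}_{\Delta_{[0,1]}}$ generates $\mathcal{B}$ because finite-dimensional evaluation cylinders generate the Borel $\sigma$-algebra of the compact-open topology — so the direct limit of the measurable spaces is $(C([0,1],\mathbb{R}),\mathcal{B})$. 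I would state this cleanly first.

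Next I would use the cocycle hypothesis to show the measures are consistent. The point is that by \cref{conv}/\eqref{coc}, for $\Delta_{[0,1]}\le\Delta_{[0,1]}'$ the measure $\mu_{\Delta_{[0,1]}'}$ restricted to the sub-$\sigma$-algebra $\mathcal{B}_{\Delta_{[0,1]}}$ equals $\mu_{\Delta_{[0,1]}}$: inserting extra vertices $t_j < s < t_{j+1}$ and then integrating out the intermediate coordinate $x(s)$ against $p(x(t_j),t_j;\cdot,s)\,p(\cdot,s;x(t_{j+1}),t_{j+1})$ returns $p(x(t_j),t_j;x(t_{j+1}),t_{j+1})$ by \eqref{coc}, which is exactly the statement that pushing forward $\mu_{\Delta_{[0,1]}'}$ along the coordinate-forgetting map to $\mathbb{R}^{(\text{vertices of }\Delta_{[0,1]})}$ gives $\mu_{\Delta_{[0,1]}}$. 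This is Kolmogorov consistency phrased as a groupoid cocycle condition. Combined with the identification in the previous paragraph, each $\mu_{\Delta_{[0,1]}}$ is the restriction to $\mathcal{B}_{\Delta_{[0,1]}}$ of the Wiener measure $\mu_W$ (this is exactly what the text observes just before the statement), so condition \eqref{cond} holds trivially — indeed with the net eventually constant.

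To finish, I would invoke the universal property: given any probability space $(Y,\mathcal{A},\nu)$ with measure-preserving maps from every $(C([0,1],\mathbb{R}),\mathcal{B}_{\Delta_{[0,1]}},\mu_{\Delta_{[0,1]}})$ compatible with the system, one gets a measurable map $Y\to C([0,1],\mathbb{R})$ to the colimit measurable space; since $\mathcal{B}$ is generated by $\bigcup\mathcal{B}_{\Delta_{[0,1]}}$ and $\mu_W$ agrees with each $\mu_{\Delta_{[0,1]}}$ on the generating algebra, a $\pi$-$\lambda$ (monotone class) argument shows the pushforward of $\nu$ is $\mu_W$, and uniqueness of the factoring map follows from the same generation statement. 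Hence $(C([0,1],\mathbb{R}),\mathcal{B},\mu_W)$ satisfies the universal property of the direct limit, i.e.\ it \emph{is} the Wiener space, as claimed.

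The main obstacle — really the only nontrivial point — is making precise in which category the direct limit is being taken and checking that $(C([0,1],\mathbb{R}),\mathcal{B})$ genuinely is the colimit at the level of measurable spaces: one must be careful that the colimit of an increasing union of sub-$\sigma$-algebras on a fixed set is the generated $\sigma$-algebra and not something larger, and that "measure-preserving map out of the system" is the right notion of morphism so that the universal property bites. Everything else (Kolmogorov consistency via the cocycle, the $\pi$-$\lambda$ uniqueness argument) is standard once that categorical setup is fixed.
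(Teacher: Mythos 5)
Your proposal is correct and follows essentially the same route as the paper, which offers no formal proof beyond the surrounding discussion: the $\sigma$-algebras form a direct system with limit $\mathcal{B}$ (as set up in the subsection on sigma algebras and approximations), the cocycle condition \ref{coc} is exactly Kolmogorov consistency so that $\mu_{\Delta'_{[0,1]}}$ restricts to $\mu_{\Delta_{[0,1]}}$ on $\mathcal{B}_{\Delta_{[0,1]}}$ whenever $\Delta_{[0,1]}\le\Delta'_{[0,1]}$, and combining this with the convergence \ref{cond} (the net being constant forces each $\mu_{\Delta_{[0,1]}}$ to equal $\mu_W$ on $\mathcal{B}_{\Delta_{[0,1]}}$) identifies the direct limit with the Wiener space. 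Your added care about the category in which the colimit is taken and the $\pi$-$\lambda$ argument for uniqueness only makes explicit what the paper leaves implicit.
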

$\,$\\Evidently, these choices of data are special. In general we won't have this, we will only get the Wiener measure in the limit in the sense of analysis.
\subsubsection{*The Lebesgue Measure as a Direct Limit of Finite Sigma Algebras}\label{lebe}
This construction of the Wiener measure in the previous subsection is analogous to the following construction of the Lebesgue measure as a direct limit: consider the interval $[0,1]$ with the Borel $\sigma$-algebra $\mathcal{B},$ and for each triangulation $\Delta_{[0,1]}$ let $\mathcal{B}_{\Delta_{[0,1]}}$ be the finest sub-$\sigma$-algebra such that the open faces are atoms, ie. the vertices and corresponding open intervals are measurable and don't contain any proper nonempty measurable subsets. Consider the map 
\begin{equation}
    \Omega:\text{Pair}\,[0,1]\to\mathbb{R},\;\Omega(x,y)= y-x\,.
\end{equation}
This is a cocycle such that $VE_0(\Omega)=dx.$ Using the orientation on $[0,1]$ gives us a direct system of measures spaces 
\begin{equation}
   \{ ([0,1],\mathcal{B}_{\Delta_{[0,1]}},\Omega_{\Delta_{[0,1]}})\}_{\Delta_{[0,1]}\in \mathcal{T}_{[0,1]}}
\end{equation}
whose limit is the Lebesgue measure, eg. if $(a,b)$ is a face of $\Delta_{[0,1]}$ then $\Omega_{\Delta_{[0,1]}}(a,b)=b-a.$ To make the analogy even tighter, in \cref{direct} we can (informally) say that \begin{equation}
    VE_0(\mu_{[\cdot,\cdot]})=\frac{d}{dx^2},
\end{equation} as a distribution.
\\\\If we instead choose a $1$-cochain $\Omega$ which isn't a cocycle, but such that $VE_0(\Omega)=dx,$ we could still derive the Lebesgue measure by using approximate measures, as in \ref{cond}.
\section{Cochains}
In this section we will define the nerve of a groupoid, followed by completely symmetric/antisymmetric cochains on groupoids and algebroids, and finally local groupoids. Our description of the nerve of a groupoid will differ from the traditional one, eg. in \cite{Crainic}.\footnote{In the context of the theory of rough paths, germs of cochains on the pair groupoid were also used in \cite{step} (without using this terminlogy).}
\\\\We have included an index of notation at the end of this paper, see \ref{a4}.
\subsection{The Nerve of a Groupoid and Cochains}
We are going to give a definition of the nerve of a groupoid that is slightly different, but equivalent to the usual definition. Our definition makes analogies between Lie groupoids and Lie algebroids clearer, and as a result it makes the van Est map easier to define. 
\begin{definition}
The nerve of a Lie groupoid $G\rightrightarrows X$, denoted $G^{(\bullet)},$ is a simplicial manifold\footnote{We describe the face and degeneracy maps in the next section.} which in degree $n\ge 1$ is given by the following fiber product:
\begin{equation}\label{second}
    G^{(n)}=\underbrace{G\sideset{_s}{_{s}}{\mathop{\times}} G \sideset{_s}{_{s}}{\mathop{\times}} \cdots\sideset{_s}{_{s}}{\mathop{\times}} G}_{n \text{ times}}\,.
    \end{equation}
We set $G^{(0)}=X.$ In the context of $n$-cochains (soon to be defined), we will frequently identify $x\in X$ with $(\textup{id}(x),\ldots,\textup{id}(x))\in G^{(n)}$; we call the image of $X$ in $G^{(n)}$ the identity bisection.
\end{definition}
\begin{comment}\begin{remark}In plain language, the nerve of $G$ is a sequence of manifolds, which in degree $n$ consists of $n$ arrows in $G$ with a common source. There are maps between each of the manifolds in the sequence which we are not mentioning.
\end{remark}
\end{comment}
$\,$\\The traditional definition of the nerve sets
\begin{equation}
    G^{(n)}=\underbrace{G\sideset{_t}{_{s}}{\mathop{\times}} G \sideset{_t}{_{s}}{\mathop{\times}} \cdots\sideset{_t}{_{s}}{\mathop{\times}} G}_{n \text{ times}}\,,
\end{equation}
which is the space consisting of $n$ arrows which are sequentially composable. The two definitions are equivalent, with isomorphism
\begin{equation}
    (g_1,g_2,\ldots,g_n)\in {G\sideset{_t}{_{s}}{\mathop{\times}} \cdots\sideset{_t}{_{s}}{\mathop{\times}} G}_{}\mapsto (g_1,g_1g_2,\ldots,g_1g_2\cdots g_n)\in {G\sideset{_s}{_{s}}{\mathop{\times}}\cdots\sideset{_s}{_{s}}{\mathop{\times}} G}_{}\,.
\end{equation}
Our definition of $G^{\bullet}$ makes it clear that the symmetric group $S_n$ acts on $G^{(n)}$ by permutations. A little less obvious is that $S_{n+1}$ acts on $G^{(n)}.$ We define $S_n$ to be permutations of the set $\{0,1,\ldots,n-1\}.$
\begin{definition}
For $\sigma\in S_{n+1}$ and for $(g_1,\ldots,g_n)\in G^{(n)},$ we let
\begin{equation}
    \sigma\cdot(g_1,\ldots,g_n):=(g^{-1}_{\sigma^{-1}(0)}g_{\sigma(1)},\ldots,g^{-1}_{\sigma^{-1}(0)}g_{\sigma(n)})\,,
\end{equation}
where $g_0:=\textup{id}({s(g_1)}).$ The result is a point in $G^{(n)}$ whose common source is $t(g_{\sigma^{-1}(0)}).$
\end{definition}
With this definition, if $\sigma$ fixes $0$ then we get the obvious permutation action of $S_n$ (the action of $S_3$ was used in \cite{hoyo} and an alternative construction on groups is done in the appendix of \cite{yag}).
\begin{exmp}
Consider the pair groupoid $\textup{Pair}\,\textup{X}\rightrightarrows X$ (defined in \ref{pair}). We have that 
\begin{equation}
    X^{(n)}\cong X^{n+1}=\{(x_0,x_1,\ldots,x_n): x_i\in X\}\;.\end{equation}
The $n$ arrows are $(x_0,x_1),(x_0,x_2),\ldots,(x_0,x_n)$; $S_n$ acts by permuting the coordinates $x_1,\ldots ,x_n$ while $S_{n+1}$ acts by permuting all coordinates. In the context of $n$-cochains we will identify $x\in X$ with $(x,\ldots,x)\in X^{n+1}.$
\end{exmp}
The following definition is standard, eg. see \cite{weinstein}:
\begin{definition}\label{normc}
A (smooth) $n$-cochain is a (smooth) function $\Omega:G^{(n)}\to\mathbb{R}$ on a groupoid $G\rightrightarrows X.$ It is said to be normalized if $g_i$ being an identity for some $1\le i\le n$ implies that $\Omega(g_1,\ldots,g_n)=0.$ By convention we consider $0$-cochains to be normalized without further condition. We denote the sets of $n$-cochains and normalized $n$-cochains by 
\begin{align*}
   & C^n(G)=\text{ space of smooth n-cochains}\;,
    \\& C_0^n(G)=\text{ space of normalized smooth n-cochains}\;.
\end{align*}
\end{definition}
Now since we have an action of $S_{n+1}$ on $G^{(n)},$ we get an action of $S_{n+1}$ on $n$-cochains $\Omega$ by duality, ie.
\\\begin{equation}
(\sigma\cdot \Omega)(g_1,\ldots,g_n)=\Omega(\sigma^{-1}\cdot(g_1,\ldots,g_n))\,.
\end{equation}
\begin{definition}
A $n$-cochain $\Omega$ antisymmetric (symmetric) if it is antisymmetric (symmetric) with respect to the action of $S_n.$ 
\end{definition}
Normalized, antisymmetric cochains behave much like $n$-forms on the corresponding Lie algebroid (soon to be defined), however the following cochains are really the right analogue due to their behaviour with respect to the groupoid differential and their connection to Stokes' theorem. In particular, they are automatically normalized:
\begin{definition}\label{antic}
A cochain $\Omega:G^{(n)}\to\mathbb{R}$ is completely antisymmetric if it is antisymmetric with respect to the action of $S_{n+1}.$ We use the notation
\begin{align*}
  \Lambda^nG^*=\text{ subspace of completely antisymmetric n-cochains}\;.
\end{align*}
\end{definition}
There is a natural graded product on antisymmetric cochains:
\begin{definition}\label{wedge}
Let $\Omega\in   \Lambda^iG^*,\;\Omega'\in \Lambda^jG^*.$ Then we get a cochain $\Omega\wedge\Omega'\in   \Lambda^{i+j}G^*$ defined by completely antisymmetrizing the map
\begin{equation}
    (g_1,\ldots,g_i,g_{i+1},g_{i+j})\mapsto \Omega(g_1,\ldots,g_i)\Omega'(g_{i+1},\ldots,g_{i+j})\;.
\end{equation}
\end{definition}
$\,$\\Completely antisymmetric cochains vanish on degenerate points in $G^{(n)},$ ie. points $(g_1,\ldots,g_n)$ where either there is a $g_i$ which is an identity or if there are $g_i, g_j$ such that $i\ne j$ but $g_i=g_j.$ This is also true for normalized cochains which are symmetric (instead of antisymmetric) under $S_{n+1},$ but for cochains which aren't normalized we will add an additional condition:
\begin{definition}\label{well}
A cochain $\Omega:G^{(n)}\to\mathbb{R}$ is completely symmetric if it is symmetric with respect to the action of $S_{n+1}$ and if it is well-defined on degeneracies in the following sense: if $(g_1,\ldots, g_n),\,(g'_1,\ldots,g'_n)\in G^{(n)}$ and $\{s(g_1),g_1,\ldots,g_n\}=\{s(g'_1),g_1',\ldots,g_n'\},$ then 
\begin{equation}
    \Omega(g_1,\ldots,g_n)=\Omega(g_1',\ldots,g_n').
    \end{equation}\
We use the notation
\begin{align*}
    &S^nG^*=\text{ subspace of completely symmetric n-cochains}\;,
    \\& S^n_0G^*=\text{ subspace of normalized completely symmetric n-cochains}\;.
\end{align*}
\end{definition}
$\,$\\The point of the degeneracy condition in the previous definition is that an $n$-cochain $\Omega$ satisfying it naturally defines an $S_{k+1}$-invariant $k$-cochain for any $k\le n,$ eg. on $(g_1,\ldots, g_k),$ $\Omega$ equals $\Omega(g_1,\ldots,g_k,s(g_1),\ldots, s(g_1)),$ where we have repeated $s(g_1)$ $(n-k)$ times. 
\begin{exmp}
Let $\mu$ be a locally finite Borel measure on $\mathbb{R}^2.$ For $x,y,z\in\mathbb{R}^2$, let $C_{(x,y,z)}$ be the relative interior of the convex hull containing $x,y,z$ (ie. the interior of the convex hull as a manifold with boundary). Then 
\begin{equation}
    \Omega(x,y,z)=\mu(C_{(x,y,z)})
    \end{equation}
is a completely symmetric $2$-cochain on $\textup{Pair}\,\mathbb{R}^2$ (which isn't necessarily continuous). 
\end{exmp}
Most of the paper will emphasize the following cochains:
\begin{definition}\label{even}
Let $\mathcal{A}^n_0G^*$ denote smooth, normalized $n$-cochains which are invariant under $A_{n+1}$ (even permutations).
\end{definition}
\begin{lemma}
We have the following decomposition:
\begin{equation}
    \mathcal{A}^n_0G^*=S^{n}_0G^*\oplus\Lambda^{n} G^*\;.
    \end{equation}    
\end{lemma}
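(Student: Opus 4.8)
The plan is to run the standard symmetrization/antisymmetrization argument, adapted to the index‑two inclusion of $A_{n+1}$ in $S_{n+1}$, and then separately account for the two bookkeeping subtleties built into Definitions \ref{well} and \ref{even}: normalization and well-definedness on degeneracies. Throughout, $n\ge 1$, so that $S_{n+1}$ has transpositions (for $n=0$ the statement is degenerate and there is nothing to do). First I would record the two easy inclusions. A completely antisymmetric $n$-cochain is fixed by every even permutation and, as noted after Definition \ref{wedge}, vanishes on all degenerate points — in particular it is normalized — so $\Lambda^n G^*\subseteq\mathcal{A}^n_0G^*$. A completely symmetric, normalized $n$-cochain is $S_{n+1}$-invariant, hence $A_{n+1}$-invariant, and normalized, so $S^n_0G^*\subseteq\mathcal{A}^n_0G^*$. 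The sum is direct: if $\Omega$ is simultaneously $S_{n+1}$-invariant and completely antisymmetric, then for any transposition $\tau$ we get $\Omega=\tau\cdot\Omega=-\Omega$, so $\Omega=0$ (we work over $\mathbb{R}$).

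For the nontrivial inclusion $\mathcal{A}^n_0G^*\subseteq S^n_0G^*+\Lambda^nG^*$, fix a transposition $\tau\in S_{n+1}$ and, given $\Omega\in\mathcal{A}^n_0G^*$, set $\Omega_-=\tfrac12(\Omega-\tau\cdot\Omega)$ and $\Omega_+=\Omega-\Omega_-=\tfrac12(\Omega+\tau\cdot\Omega)$. The one fact to record is that $\rho\cdot\Omega=\Omega$ for every even $\rho$ (by hypothesis) and $\rho\cdot\Omega=\tau\cdot\Omega$ for every odd $\rho$: indeed $\rho=\tau\,(\tau^{-1}\rho)$ with $\tau^{-1}\rho$ even, so $\rho\cdot\Omega=\tau\cdot\big((\tau^{-1}\rho)\cdot\Omega\big)=\tau\cdot\Omega$. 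From this, writing $\rho\cdot(\tau\cdot\Omega)=(\rho\tau)\cdot\Omega$ and distinguishing the parity of $\rho$, one reads off directly that $\rho\cdot\Omega_+=\Omega_+$ and $\rho\cdot\Omega_-=\mathrm{sgn}(\rho)\,\Omega_-$ for all $\rho\in S_{n+1}$. Hence $\Omega_-$ is completely antisymmetric, so automatically normalized, hence $\Omega_-\in\Lambda^nG^*$; and $\Omega_+=\Omega-\Omega_-$ is $S_{n+1}$-invariant and normalized (a difference of normalized cochains).

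It remains to check that $\Omega_+$ satisfies the well-definedness condition of Definition \ref{well}, and this is the step I expect to carry the actual content. I would argue that $\Omega_+$ vanishes on every degenerate point of $G^{(n)}$: if $g_i=g_j$ for some $i\ne j$ in $\{0,1,\dots,n\}$ (with $g_0:=\mathrm{id}(s(g_1))$, so the case of an identity arrow is included), choose $\sigma\in S_{n+1}$ with $\sigma(i)=0$; then the $\sigma^{-1}(j)$-th component of $\sigma\cdot(g_1,\dots,g_n)$ is $g_{\sigma^{-1}(0)}^{-1}g_{\sigma(\sigma^{-1}(j))}=g_i^{-1}g_j=\mathrm{id}$ (for the pair groupoid this is just: permute the repeated coordinate into the $0$-slot), so the normalized cochain $\Omega_+$ vanishes there, and by $S_{n+1}$-invariance it vanishes on the original point. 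Consequently, if two points of $G^{(n)}$ determine the same underlying set $\{s(g_1),g_1,\dots,g_n\}$, then either that set has fewer than $n+1$ elements — in which case both points are degenerate and $\Omega_+$ is zero on each — or it has exactly $n+1$ elements, in which case the two points lie in one $S_{n+1}$-orbit and $\Omega_+$ takes the same value on both. Thus $\Omega_+\in S^n_0G^*$, and $\Omega=\Omega_++\Omega_-$ exhibits the desired decomposition. The only genuine subtlety is precisely this last point, that $A_{n+1}$-invariance together with normalization suffices to upgrade $S_{n+1}$-invariance of $\Omega_+$ to complete symmetry in the sense of Definition \ref{well}; the rest is the routine averaging computation.
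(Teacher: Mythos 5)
The paper states this lemma without proof, so there is no argument of the author's to compare yours against; judged on its own, your proof is correct and complete. The averaging over the index-two subgroup $A_{n+1}\le S_{n+1}$ is the standard isotypic decomposition, and you have correctly isolated the one substantive point, namely that the invariant part $\Omega_+$ satisfies the degeneracy condition of Definition \ref{well}: your observation that normalization together with $S_{n+1}$-invariance forces $\Omega_+$ to vanish on every degenerate tuple (by permuting a repeated arrow into the $0$-slot so that some component becomes an identity) is exactly what is needed, since two tuples with the same underlying set are either both degenerate or differ by a permutation of the non-identity slots. Two of your side remarks are also worth keeping on record: that $\Omega_-$ is automatically normalized because complete antisymmetry forces vanishing on degeneracies, so $\Omega_+=\Omega-\Omega_-$ is normalized without one having to check that $\tau\cdot\Omega$ itself is normalized (it need not be); and that the statement should be read with $n\ge 1$, since for $n=0$ both summands coincide with the space of $0$-cochains.
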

$\,$
\\\\The direct sum 
\begin{equation}
    \mathcal{A}^nG^*=S^{n}G^*\oplus\Lambda^{n} G^*
    \end{equation}
will also play an important role. These are the cochains which are invariant under even permutations $A_{n+1}$ and are well-defined on degeneracies. These are, in great generality, the things we can integrate over an $n$-dimensional manifold $X,$ when $G=\textup{Pair}\,\textup{X}.$ The first summand is like the space of signed measures (normalized non-differentiable ones are like densities), they don't need an orientation to be integrated. The second summand is like the space of top forms, they require an orientation to be integrated. The ones in lower degree can be integrated over continuous maps via the pullback. We'll discuss this in detail later.
\begin{exmp}\label{first}
Let $f:[0,1]\to\mathbb{R}$ be a smooth function and let $F'=f.$ We have some $1$-cochains on $\textup{Pair}\,[0,1]$ given by 
\begin{align}
   & (x,y)\mapsto f(x)(y-x)\,,
   \\&(x,y)\mapsto f\Big(\frac{x+y}{2}\Big)(y-x)\,,
   \\&(x,y)\mapsto f(y)(y-x)\,,
   \\&(x,y)\mapsto F(y)-F(x)\,.
\end{align}
These are all in $\mathcal{A}^n_0\textup{Pair}\,[0,1]^*$ and the second and fourth cochains are in $\Lambda^n\textup{Pair}\,[0,1]^*$. The first three correspond to the left hand, midpoint and right hand Riemann sums, respectively.
\\\\Let's compare them by fixing $x$ and computing their Taylor expansions up to order $2,$ centered at $y=x.$ Stated differently, let's compute their second order jets along the source fibers. We get, respectively,
\begin{align}
&f(x)(y-x)\,,
\\&f(x)(y-x)+\frac{1}{2}f'(x)(y-x)^2\,,
\\& f(x)(y-x)+f'(x)(y-x)^2\,,
\\& f(x)(y-x)+\frac{1}{2}f'(x)(y-x)^2\,.
\end{align}
We suggestively rewrite these Taylor expansions as 
\begin{align}
&f(x)\,dx\,,
\\&f(x)\,dx+\frac{1}{2}f'(x)\,dx^2\,,
\\& f(x)\,dx+f'(x)\,dx^2\,,
\\& f(x)\,dx+\frac{1}{2}f'(x)\,dx^2\,.
\end{align}These all agree to first order but to second order only the second and fourth cochains agree. As a consequence, these cochains are all equivalent from the perspective of smooth maps, but from the perspective of Brownian paths only the second and fourth are equivalent. That is, if we pull back these cochains by a smooth map and integrate we get the same value for all of them, but if we pull them back by Brownian paths we only get the same values for the second and fourth cochains. We will discuss this more in \cref{brown}. 
\\\\We haven't defined $VE_{\bullet}$ yet, but up to first order these Taylor expansions will be obtained by applying $VE_0$ to the cochains, and the Taylor expansions up to second order will be obtained by applying $VE_1.$
\end{exmp}
\subsubsection{Cochains on Lie Algebroids}
Some things here might be described in a slightly unconventional way in order to display relationships with cochains on groupoids.
\\\\Associated to every vector bundle $\pi:V\to M$ is a groupoid with $s=t=\pi,$ where the multiplication structure is given by fiberwise addition. Therefore, we naturally have multilinear antisymmetric (symmetric) cochains on vector bundles. Since every Lie algebroid has an underlying vector bundle,  we get the following:
\begin{definition}\label{multil}
We denote smooth, pointwise multilinear maps $\mathfrak{g}^{\oplus n}\to\mathbb{R}$ by $C^n(\mathfrak{g}).$
\end{definition}
Here, given a Lie algebroid $\mathfrak{g}\to X,$ $\mathfrak{g}^{\oplus n}\to X$ is the $n$-fold direct sum of the underlying vector bundle. We denote by $\mathfrak{g}^{\otimes n}\to X$ the $n$-fold tensor product. 
\begin{definition}
A map $\omega\in C^n(\mathfrak{g})$ is an $n$-form if it is antisymmetric under the action of $S_n.$ We denote these by $\Gamma(\Lambda^n\mathfrak{g}^*).$
\end{definition}
\begin{exmp}
Let $\mathfrak{g}=TX.$ Then an $n$-form on $\mathfrak{g}$ is an $n$-form on $X.$
\end{exmp}
Therefore, unless stated otherwise (as in the following definition), an $n$-form is antisymmetric by default.
\begin{comment}
    \begin{definition}
A map $\omega\in C^n(\mathfrak{g})$ is a (completely) symmetric $n$-form if it is (completely) symmetric with respect to the action of $(S_{n+1})\,\;S_n.$ We denote completely antisymmetric $n$-forms by $S^n\mathfrak{g}^*.$
\end{definition}
\end{comment}
Due to example \ref{first}, we will want to consider functions on $\mathfrak{g}^{\oplus n}$ which pointwise are polynomial functions. Recall:
\begin{definition}
Let $V$ be a vector space. A homogeneous map of degree $i$ on $V$ is a map $f:V\to\mathbb{R}$ which satisfies $f(\lambda v)=\lambda^if(v),$ for all $\lambda\in\mathbb{R}$ and $v\in V;$ it is positive homogeneous if this condition is satisfied for $\lambda > 0.$ A polynomial on $V$ is a smooth function $f:V\to\mathbb{R}$ which is a finite sum of homogeneous functions with degrees $i\in\{0,1,2,\ldots\}.$
\end{definition}
\begin{definition}\label{polyg}
We denote by $\Gamma(\mathcal{A}^n_{k}\mathfrak{g}^*)$ the space of smooth maps $\mathfrak{g}^{\oplus n}\to\mathbb{R}$ which, pointwise, are polynomials of degree $k$ and are invariant under $A_{n+1}.$
\end{definition}
\begin{exmp}
In example \ref{first}, $dx^2$ is a pointwise homogeneous map of degree $2$ on $T\mathbb{R}$; it is also completely symmetric. These higher order terms can't be discarded when integrating over Brownian paths.
\end{exmp}
\begin{exmp}
Let $g:TX\otimes TX\to\mathbb{R}$ be a Riemannian metric on $X.$ Then the map 
\begin{equation}
    TX\to\mathbb{R},\;\xi\mapsto\sqrt{g(\xi,\xi)}
    \end{equation}
is completely symmetric and positive homogeneous of degree $1.$ Of course, we can integrate it over curves to compute their arclength. 
\end{exmp}
\begin{exmp}
A $2$-form on a manifold $X$ is in $\Gamma(\mathcal{A}^2_{2}TX^*).$
\end{exmp}
An interesting implication is that given a vector space $V,$ there is a ``secret" action of $S_{n+1}$ on the $n$-fold product $V^{\oplus n}.$ The non-permutation part of the action is given by 
    \begin{equation}
        (v_1,v_2,\ldots,v_n)\mapsto (-v_1,-v_1+v_2,\ldots,-v_1+v_n)\,.
    \end{equation}
Note that, this action doesn't descend to the tensor product. One reason we don't hear about this action is because of the following:
    \begin{proposition}\label{same}
    Let $V$ be a finite dimensional vector space and let $f:V^{\oplus n}\to\mathbb{R}$ be a smooth map that is positive homogeneous of degree $1$ in each component. Then $f$ is multilinear, and it is invariant under $A_{n+1}$ if and only if it is antisymmetric under $S_{n+1}$ if and only if it is antisymmetric under $S_n.$
    \end{proposition}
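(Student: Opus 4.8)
The plan is to establish, in order: (i) that such an $f$ is automatically multilinear; (ii) that for multilinear $f$, antisymmetry under $S_n$ already forces antisymmetry under all of $S_{n+1}$ (hence also $A_{n+1}$-invariance); and (iii) that, conversely, $A_{n+1}$-invariance forces antisymmetry under $S_n$. Since $S_{n+1}$-antisymmetry trivially implies both $S_n$-antisymmetry and $A_{n+1}$-invariance, (ii) and (iii) close the loop. Throughout I use that the obvious $S_n$-action (permuting the $n$ slots) together with the transposition $\tau_0$ — which acts on $V^{\oplus n}$ by $(v_1,\dots,v_n)\mapsto(-v_1,\,v_2-v_1,\dots,v_n-v_1)$, as recorded just before the proposition — generates $S_{n+1}$, and that $A_n$ together with a single slot-transposition generates $S_n$. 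I also use the elementary remark that if $f\circ\sigma=c_\sigma f$ with $c_\sigma\in\mathbb R^\times$ for every $\sigma$ in a generating set, then, because the action is a group action, $c_{\sigma\tau}=c_\sigma c_\tau$ for all $\sigma,\tau$; so to verify "$f$ is (anti)symmetric'' it suffices to verify it on generators. Finally, for (i): fix a slot $i$ and freeze the other arguments; the resulting $g\colon V\to\mathbb R$ is smooth and positive-homogeneous of degree $1$. From $g(2\cdot 0)=2g(0)$ we get $g(0)=0$, and smoothness gives $g(w)=L(w)+o(\lVert w\rVert)$ near the origin with $L=dg_0$ linear; evaluating at $w=\lambda v$, dividing by $\lambda$, and letting $\lambda\to 0^+$, the homogeneity $g(\lambda v)=\lambda g(v)$ forces $g(v)=L(v)$ for every $v$. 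Hence $f$ is linear in each slot, i.e. multilinear.

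For (ii), assume $f$ multilinear and $S_n$-antisymmetric, so $f$ is alternating: it vanishes whenever two of its arguments coincide. Then $f(\tau_0\cdot\mathbf v)=f(-v_1,\,v_2-v_1,\dots,v_n-v_1)$; pull $-1$ out of the first slot and expand multilinearly according to which of the slots $2,\dots,n$ receive the $-v_1$ summand. Any term in which some slot $j\ge 2$ receives $v_1$ then has $v_1$ in two slots and vanishes, so only the "all original'' term survives, giving $f(\tau_0\cdot\mathbf v)=-f(\mathbf v)=\operatorname{sgn}(\tau_0)f(\mathbf v)$. Since $\operatorname{sgn}$ already governs the $S_n$-action and $S_{n+1}=\langle S_n,\tau_0\rangle$, the remark above yields $f\circ\sigma=\operatorname{sgn}(\sigma)f$ for all $\sigma\in S_{n+1}$; restricting this to $A_{n+1}\subset S_{n+1}$ gives $A_{n+1}$-invariance, and restricting it to $S_n\subset S_{n+1}$ gives back $S_n$-antisymmetry.

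For (iii), assume $f$ multilinear and $A_{n+1}$-invariant. Consider the $3$-cycle $\rho=(0\,2\,1)\in A_{n+1}$; writing it as "apply $\tau_0$, then transpose slots $1$ and $2$'', its action on $V^{\oplus n}$ is $(v_1,v_2,v_3,\dots,v_n)\mapsto(v_2-v_1,\,-v_1,\,v_3-v_1,\dots,v_n-v_1)$. If $v_1=v_2$, the first slot of $\rho\cdot\mathbf v$ is $0$, so multilinearity gives $f(\rho\cdot\mathbf v)=0$, and $A_{n+1}$-invariance of $f$ gives $f(v_1,v_1,v_3,\dots,v_n)=0$. Thus $f$ vanishes whenever its first two arguments agree; expanding $0=f(v_1+v_2,v_1+v_2,v_3,\dots,v_n)$ by bilinearity in the first two slots then gives $f\circ(1\,2)=-f$. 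On the other hand, restricting $A_{n+1}$-invariance to $A_n\subset A_{n+1}$ gives $f\circ\alpha=f=\operatorname{sgn}(\alpha)f$ for $\alpha\in A_n$; since $S_n=\langle A_n,(1\,2)\rangle$, the remark above upgrades this to $f\circ\sigma=\operatorname{sgn}(\sigma)f$ for all $\sigma\in S_n$, i.e. $f$ is $S_n$-antisymmetric. Combining (ii) and (iii), $A_{n+1}$-invariant $\Rightarrow$ $S_n$-antisymmetric $\Rightarrow$ $S_{n+1}$-antisymmetric $\Rightarrow$ $A_{n+1}$-invariant, so the three conditions are equivalent; the case $n=1$ is degenerate, with all three holding automatically for linear $f$.

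I expect the only genuinely non-formal point to be step (iii): producing an explicit even permutation (here $\rho=(0\,2\,1)$) that converts the coincidence $v_1=v_2$ into an honest zero argument of $f$, so that $A_{n+1}$-invariance can be played against multilinearity; once one handles the pair $(1,2)$ this way, combining with the already-available $A_n$-invariance disposes of every other pair without further cleverness. The remaining ingredients — multilinearity from smoothness plus homogeneity, the one-line $\tau_0$ computation in (ii), and the bookkeeping that (anti)symmetry on a generating set propagates to the whole group — are routine.
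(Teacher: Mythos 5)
Your proof is correct. Note that the paper states this proposition without supplying any proof at all, so there is nothing of the author's to compare your argument against; your write-up fills that gap. All three ingredients check out: (i) multilinearity from smoothness plus degree-one positive homogeneity in each slot, via $g(0)=0$ and the first-order Taylor expansion at the origin; (ii) the computation that an alternating multilinear $f$ picks up exactly a sign under the extra transposition $(0\,1)$, since every cross term in the multilinear expansion of $f(-v_1,\,v_2-v_1,\dots,v_n-v_1)$ repeats $v_1$ and therefore vanishes; and (iii) --- the only step requiring a genuine idea --- the use of the even permutation $(0\,2\,1)$ to place a literal $0$ in the first slot when $v_1=v_2$, which converts $A_{n+1}$-invariance into the alternating property and hence, by polarization, into $S_n$-antisymmetry. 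Together with the trivial implications and the observation that the sign rule need only be verified on generators of $S_{n+1}$ (respectively $S_n$), this closes the cycle of equivalences, and the degenerate case $n=1$ is handled correctly. One cosmetic point: your preliminary remark that $c_{\sigma\tau}=c_\sigma c_\tau$ tacitly presumes $f\neq 0$ for the constants to be well defined, but what you actually use --- that a relation $f\circ\sigma=\operatorname{sgn}(\sigma)\,f$ verified on a generating set propagates to the whole group by induction on word length, $\operatorname{sgn}$ being a homomorphism --- is valid without that assumption.
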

\subsubsection{*Densities}
Lastly, we are going to define densities. Specialized to $TX,$ these are the objects which one integrates over $X,$ where $n=\text{dim}(X).$ Often we integrate $n$-forms, but to do so requires a choice of orientation, and an $n$-form together with an orientation determines a density on $TX$ (of course, we can integrate measures, but these aren't defined using $TX$). These have a standard definition on $TX.$
\\\\First, we can think of an orientation of an $n$-dimensional vector space $V$ as a nontrivial map $\mathcal{O}:V^{\oplus n}\to \mathbb{R}$ which satisfies 
\begin{equation}
    \mathcal{O}(Av_1,\ldots,Av_n)=\text{sgn}(\det{A})\mathcal{O}(v_1,\ldots,v_n)\,.
\end{equation}
Equivalently, we can think of it as a nontrivial completely antisymmetric map which is positive homogeneous of degree $0$ in each component. Given a trivial vector bundle $\mathbb{R}^n\to X,$ a local orientation is given by an orientation of $\mathbb{R}^n.$
\begin{definition}
A smooth density on a rank $n$ Lie algebroid $\mathfrak{g}\to X$ is a map $\mu:\mathfrak{g}^{\oplus n}\to\mathbb{R}$ such that, given a local trivialization and a local orientation $\mathcal{O},$ the product $\mathcal{O}\mu$ is a smooth $n$-form (where we restrict $\mu$ to the local trivialization).
\end{definition}
Pointwise, a density satisfies $\mu(A\xi_1,\ldots,A\xi_n)=|\det{A}|\mu(\xi_1,\ldots,\xi_n)$ and we only need local orientations to talk about smoothness. In the same vein as \ref{same}, we have:
\begin{proposition}
A map $\mathfrak{g}^{\oplus n}\to\mathbb{R}$ is a density if and only if it is completely symmetric and positive homogeneous of degree $1$ in each component. It is smooth if locally it is smooth after multiplying by an orientation $\mathcal{O}.$
\end{proposition}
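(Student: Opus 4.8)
The plan is to reduce the whole statement to a pointwise assertion in linear algebra and then to exploit the explicit form of the ``secret'' $S_{n+1}$-action on $V^{\oplus n}$. Being a density, being completely symmetric, and being positively homogeneous of degree $1$ in each slot are all conditions that are checked fibrewise over $X$ (and the last clause of the proposition merely unpacks the definition of a \emph{smooth} density), so it is enough to fix a fibre $V:=\mathfrak{g}_x\cong\mathbb{R}^n$ and show: a map $\mu\colon V^{\oplus n}\to\mathbb{R}$ for which $\mathcal{O}\mu$ is a smooth $n$-form — for some, equivalently any, local orientation $\mathcal{O}$ — is precisely one which is completely symmetric and positively homogeneous of degree $1$ in each slot. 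The bridge between the two pictures is the classical transformation law valid on the open dense set of bases,
\begin{equation*}
\mu(Av_1,\dots,Av_n)=|\det A|\,\mu(v_1,\dots,v_n),\qquad A\in GL(V),
\end{equation*}
which I would derive by writing $\mu=\omega/\mathcal{O}$ on bases, with $\omega:=\mathcal{O}\mu$, and combining $\omega(Av_1,\dots,Av_n)=\det(A)\,\omega(v_1,\dots,v_n)$ with $\mathcal{O}(Av_1,\dots,Av_n)=\text{sgn}(\det A)\,\mathcal{O}(v_1,\dots,v_n)$.

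For the direction ``density $\Rightarrow$ completely symmetric and positively homogeneous'' I would work from this transformation law. Positive homogeneity of degree $1$ in the $i$-th slot is the instance where $A$ scales $v_i$ by $\lambda>0$ and fixes the remaining vectors of a basis, so $\det A=\lambda$; on a tuple that is not a basis $\mu$ vanishes, since as we approach it $\omega\to 0$ while $\mathcal{O}=\pm 1$ is bounded away from $0$, and scaling a slot preserves that locus. Complete symmetry has two parts: invariance under $S_n\subset S_{n+1}$ is the transformation law for permutation matrices, whose determinant has absolute value $1$; invariance under the additional transposition — which, by the formula recalled in the excerpt, sends $(v_1,\dots,v_n)$ to $(-v_1,-v_1+v_2,\dots,-v_1+v_n)$ — follows because, on any basis, this map has matrix with first column $(-1,0,\dots,0)^{\top}$ and lower-right $(n-1)\times(n-1)$ block equal to the identity, hence determinant $-1$. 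Since $S_{n+1}$ is generated by $S_n$ together with this transposition, $\mu$ is $S_{n+1}$-invariant on bases, and the degeneracy condition of \cref{well} holds automatically because $\mu$ vanishes off the basis locus.

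For the converse I would run everything through $\omega:=\mathcal{O}\mu$. It is antisymmetric under $S_n$ (since $\mathcal{O}$ is and $\mu$ is symmetric) and, being the product of the degree-$0$ map $\mathcal{O}$ with the degree-$1$ map $\mu$, it is positively homogeneous of degree $1$ in each slot; by hypothesis it is also smooth. Then \cref{same}, applied to $\omega$, promotes it to a genuinely multilinear map, so $\omega$ is a smooth alternating multilinear form, i.e.\ a smooth $n$-form, and $\mu$ is a smooth density. A pleasant byproduct, mirroring the antisymmetric case of \cref{same}, is that mere $S_n$-symmetry of $\mu$ already forces full $S_{n+1}$-symmetry here, since an $n$-form automatically satisfies $\omega(Av_1,\dots,Av_n)=\det(A)\,\omega(v_1,\dots,v_n)$.

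The step I expect to cause the most trouble is the behaviour on the degenerate locus, where $v_1,\dots,v_n$ fail to span $V$ and $\mathcal{O}$ vanishes: there the matrix description of the $S_{n+1}$-action breaks down and the product $\mathcal{O}\mu$ carries no information about $\mu$ itself. This is handled by a short continuity/homogeneity argument (scaling a vanishing slot, or applying a transposition from $S_{n+1}\setminus S_n$ when two slots coincide, forces $\mu=0$, in line with \cref{well}), but one must be careful to state the proposition for densities that are at least continuous, and to keep the notions of density and \emph{smooth} density apart: without the smoothness hypothesis \cref{same} is unavailable and the equivalence has to be proved on the dense set of bases and then extended by continuity. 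Everything else is routine bookkeeping.
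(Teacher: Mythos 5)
The paper states this proposition without proof (it is offered ``in the same vein as'' Proposition \ref{same}), so there is no argument of the author's to compare yours against; I can only assess your proposal on its own terms. Your forward direction is correct and complete: once you have the pointwise law $\mu(Av_1,\dots,Av_n)=|\det A|\,\mu(v_1,\dots,v_n)$, realizing a slot permutation as a permutation matrix and the extra transposition of $S_{n+1}$ as the matrix with first column $(-1,0,\dots,0)^{\top}$ and determinant $-1$ settles $S_{n+1}$-invariance on bases, and separate positive homogeneity is the diagonal case. One small improvement: you do not need continuity to kill $\mu$ on the degenerate locus — if $v_1,\dots,v_n$ span a proper subspace $W$, take $A$ equal to the identity on $W$ and scaling a complement by $\lambda$; then $Av_i=v_i$ while $\det A=\lambda$, so $\mu(v)=|\lambda|\mu(v)$ for all $\lambda>0$ forces $\mu(v)=0$. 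This also disposes of the degeneracy clause of Definition \ref{well} without any regularity assumption.

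The genuine gap is in the converse. Your route through $\omega:=\mathcal{O}\mu$ and Proposition \ref{same} requires $\omega$ to be smooth, which is exactly the hypothesis of the \emph{second} sentence of the proposition; the first sentence asserts the equivalence for arbitrary maps $\mathfrak{g}^{\oplus n}\to\mathbb{R}$, with no smoothness or even continuity, so ``extend by continuity from the dense set of bases'' is not available either. What is missing is a direct derivation of the $|\det A|$ transformation law from $S_{n+1}$-invariance together with separate positive homogeneity. Concretely, you must show invariance under the remaining generators of $GL(V)$ acting on a basis tuple — shears and a single sign flip — and these are not individual elements of your group of symmetries; they have to be manufactured as compositions of the transpositions $(0\,i)$ (whose effect $(v_1,\dots,v_n)\mapsto(v_1-v_i,\dots,-v_i,\dots,v_n-v_i)$ mixes slots) with positive scalings and slot permutations. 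Equivalently, setting $\tilde N(a_0,\dots,a_n):=\mu(a_1-a_0,\dots,a_n-a_0)$, complete symmetry says $\tilde N$ is a symmetric, translation-invariant function of the $n+1$ vertices, and separate homogeneity says $\tilde N$ scales linearly when one vertex slides along a ray emanating from another; one then argues that these vertex-slides act transitively enough on nondegenerate simplices, and rescale the Euclidean volume by the same factor at each step, to conclude $\mu=c\,|\det|$ in any trivialization. That argument (or some substitute for it) is the actual content of the unqualified ``if'' direction, and your proposal identifies the difficulty but does not supply it.
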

\begin{exmp}
Let $dx$ be the standard $1$-form on $\mathbb{R},$ then $|dx|$ is a density. With the standard orientation on $T\mathbb{R},$ $\mathcal{O}|dx|=dx.$
\end{exmp}
\subsection{*Simplicial Maps and the Groupoid Differential}\label{diff}
We complete the construction of the nerve by writing out the face and degeneracy maps, and we explain a relationship between symmetrization and the groupoid differential.
\\\\Let $G\rightrightarrows X$ be a Lie groupoid. There are $n+2$ face maps 
\begin{equation}
    \delta_0,\ldots,\delta_{n+1}:G^{(n+1)}\to G^{(n)},\,n\ge 0.
    \end{equation}
For $n=0$ we have that $\delta_0=t,\delta_1=s.$ For $n\ge 1$ and for $k\ge 1$ we have that $\delta_k$ is the projection which drops the $kth$ arrow, and 
\begin{equation}
\delta_0(g_1,\ldots,g_{n+1})=(g_1^{-1}g_2,\ldots,g^{-1}_1g_{n+1})\,.
\end{equation}
There are also $n+1$ degeneracy maps 
\begin{equation}
    \sigma_0,\ldots,\sigma_n:G^{(n)}\to G^{(n+1)},\,n\ge 0.
    \end{equation}
For $n=0$ we have $\sigma_0(x)=\text{id}(x).$ For $n\ge 1$ we have $\sigma_0(g_1,\ldots,g_n)=(s(g_1),g_1,\ldots,g_n),$ 
\begin{equation}
    \sigma_k(g_1,\ldots,g_n)=(g_1,\ldots,g_{k-1},s(g_1),g_{k},\ldots,g_n)\;,\;\;k\ge 1\;. \end{equation}
The following definition is standard:
\begin{definition}\label{diffg}
For all $n\ge 0,$ there is a differential $\delta^*:C^n(G)\to C^{n+1}(G)$ which is given by the alternating sum of the pullbacks of the face maps, ie.
\begin{equation}
    \delta^*\Omega=\frac{1}{(n+1)!}\sum_{i=0}^n(-1)^i\delta_i^*\Omega\;.
\end{equation}
\end{definition}
This differential satisfies $\delta^{*2}=0$ and we can take its cohomology.
\begin{proposition}
The differential $\delta^*$ restricts to a differential $\delta^*:\Lambda^{n}G^*\to\Lambda^{n+1}G^*$ and the inclusion $\Lambda^{n}\,G^*\xhookrightarrow{} C^n(G)$ is a quasi-isomorphism.
\end{proposition}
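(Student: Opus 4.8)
The plan is to prove both halves of the statement in turn. For the first half—that $\delta^*$ restricts to a differential on completely antisymmetric cochains—I would first observe that complete antisymmetry of $\Omega \in \Lambda^n G^*$ means invariance (up to sign) under the $S_{n+1}$-action, and that this is precisely the property that makes the alternating sum $\sum_{i=0}^n (-1)^i \delta_i^*\Omega$ manifestly $S_{n+2}$-antisymmetric. Concretely, one checks that the face maps $\delta_0,\dots,\delta_{n+1}:G^{(n+1)}\to G^{(n)}$ intertwine the $S_{n+2}$-action on the source with the $S_{n+1}$-action on the target in the expected combinatorial way (the $i$-th face map "forgets the $i$-th vertex," in the bisection/vertex picture where $G^{(n)}\cong X^{n+1}$ for the pair groupoid), so that precomposing a permutation of $\{0,\dots,n+1\}$ with the alternating sum of faces just reshuffles the summands with a sign equal to the sign of the permutation. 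Since $\delta^*$ as defined in \ref{diffg} is a scalar multiple of that alternating sum, $\delta^*\Omega$ lands in $\Lambda^{n+1}G^*$. The fact that $\delta^{*2}=0$ is inherited from the ambient complex $C^\bullet(G)$.

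For the quasi-isomorphism claim, the strategy is to exhibit a cochain homotopy, or equivalently to show that the complementary summand of $C^n(G)$ (the non-antisymmetric part) is acyclic, so that the inclusion $\Lambda^n G^*\hookrightarrow C^n(G)$ induces an isomorphism on cohomology. The cleanest route is the standard "antisymmetrization is a deformation retract" argument: the total antisymmetrization operator $\mathrm{Alt}:C^n(G)\to \Lambda^n G^*$, averaging over $S_{n+1}$ with signs and then (if necessary) composing with the inclusion, is a chain map—this uses simplicial identities relating face maps and permutations—and $\mathrm{Alt}\circ\iota = \mathrm{id}$ on $\Lambda^n G^*$. It then remains to produce a chain homotopy $h$ with $\iota\circ\mathrm{Alt} - \mathrm{id} = \delta^* h + h\delta^*$ on $C^\bullet(G)$; one builds $h$ out of the extra degeneracy (the "extra" coface/codegeneracy available because of the augmentation by $X$ in degree $0$), exactly as in the proof that group cohomology can be computed from the normalized, or from the homogeneous, bar complex. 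Equivalently one can invoke that $C^\bullet(G)$ with $\delta^*$ computes the same cohomology as the subcomplex of alternating cochains by a spectral-sequence or acyclic-models argument.

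I expect the main obstacle to be verifying that the antisymmetrization map $\mathrm{Alt}$ is genuinely a chain map with respect to $\delta^*$—that is, that $\delta^*\circ\mathrm{Alt} = \mathrm{Alt}\circ\delta^*$. This is not completely formal because $\delta^*$ is defined via the alternating sum of only $n+1$ of the $n+2$ face maps (indices $0$ through $n$ in \ref{diffg}), whereas the $S_{n+1}$-antisymmetrization naturally interacts with all $n+2$ faces; reconciling these requires carefully tracking how the omitted face map $\delta_{n+1}$ gets absorbed by the permutation action, and checking that the "secret" $S_{n+1}$-action (via the non-permutation generators $(g_1,\dots,g_n)\mapsto(g_1^{-1}g_2,\dots)$ from \ref{diff}) is compatible with the face maps. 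Once that compatibility is established, the homotopy contraction is routine bookkeeping with the extra degeneracy. A secondary subtlety worth a remark is the smoothness/local issues: on a genuine Lie groupoid everything is smooth, but if one later wants this for the local pair groupoid the same argument applies verbatim on the germ, since face maps, degeneracies, and the permutation action all preserve a neighborhood of the identity bisection.
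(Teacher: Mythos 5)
Your argument for the first half is correct and is essentially the paper's, just packaged differently: where you check directly that the face maps intertwine the $S_{n+2}$- and $S_{n+1}$-actions so that the alternating sum of faces of an antisymmetric cochain is again antisymmetric, the paper (in its ``conceptual explanation'' subsection) identifies $\delta^*$ on $\Lambda^n G^*$ with $\mathrm{Alt}\,p_1^*$, where $p_1$ drops the last arrow, and obtains both the restriction and $\delta^{*2}=0$ at once from $\mathrm{Alt}\,p_1^*\,\mathrm{Alt}\,p_1^*=\mathrm{Alt}\,p_2^*=0$. These are the same computation. You are also right to flag the index range in Definition \ref{diffg}: as written the sum runs over only $n+1$ of the $n+2$ face maps $\delta_0,\dots,\delta_{n+1}$, which is inconsistent with the identity $\mathrm{Alt}\,p_1^*=\delta^*$ that the paper asserts; this is evidently a typo, and your reconciliation is the right one.

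For the quasi-isomorphism, the paper offers no proof at all --- the proposition is stated bare, and the conceptual-explanation subsection only treats the first half --- so your sketch is not so much a different route as the only route on the table. That said, the sketch has a genuine gap exactly where the content lives: the chain homotopy $h$ with $\iota\circ\mathrm{Alt}-\mathrm{id}=\delta^*h+h\delta^*$ is asserted to come from ``the extra degeneracy,'' but the extra-degeneracy trick is what proves the normalization theorem (degenerate cochains form an acyclic subcomplex) or the acyclicity of an augmented complex; comparing the full complex with the \emph{alternating} subcomplex is a different classical statement (ordered versus oriented cochains), whose standard proof goes through acyclic models or Eilenberg's explicit but nontrivial homotopy. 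To close the gap you would either need to write that homotopy down and check that it is implemented by smooth maps compatible with the fiber-product structure of $G^{(\bullet)}$ (so that it preserves smooth cochains on a Lie groupoid, not merely set-theoretic ones), or split the problem as: (i) $\mathrm{Alt}$ is a chain map, hence $C^\bullet(G)=\Lambda^\bullet G^*\oplus\ker\mathrm{Alt}$ as complexes over $\mathbb{R}$, and (ii) $\ker\mathrm{Alt}$ is acyclic --- and step (ii) still requires an argument. As it stands, the second half is a correct plan with the decisive verification missing.
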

\subsubsection{*Conceptual Explanation}
There is a conceptual way of thinking about the differential. For each $k\ge 0$ there are natural maps $p_k:G^{(n+k)}\to G^{(n)}$ given by the projection dropping the last $k$ arrows — if $n=0$ we map onto the common source. We get a map $\Lambda^nG^*\to \Lambda^{n+k}G^*$ by pulling back via $p_k$ and completely antisymmetrizing the resulting cochain, ie.
\begin{equation}
    \Omega\mapsto \text{Alt}_{}[p_k^*\Omega\big]\;,
\end{equation}
where $\text{Alt}$ is antisymmetrization map with respect to $S_{n+k+1}.$\footnote{Since $\Omega$ is completely antisymmetric, it doesn't matter which arrows one decides to drop, we just chose the last ones for brevity.} For $k\ge 2$ this map is zero since $p_k^*\Omega$ only depends on the first $n$ factors and for all $(g_1,\ldots,g_n,g_{n+1},\ldots,g_{n+k})$ there is a transposition fixing the first $n$ factors, so it is case that 
\begin{equation}
    \text{Alt}_{}[p_k^*\Omega\big]=-\text{Alt}[p_k^*\Omega\big]\;.
\end{equation}
On the other hand,
\begin{equation}
    \text{Alt}_{}[p_2^*\Omega\big]= \text{Alt}_{}\big[p_1^*\text{Alt}_{}[p_1^*\Omega\big]\big]
\end{equation}
since every permutation of $\{0,1,\ldots,n+2\}$ can be factored as a permutation fixing $n+2$ followed by a transposition. Thus $\text{Alt}\,p_1^*$ is a differential; we have that 
\begin{equation}
  \text{Alt}\,p_1^*=\delta^*\;.
\end{equation}
Therefore, $\delta^*$ restricts to a differential on completely antisymmetric cochains. There is a corresponding map on (normalized) completely symmetric cochains, but it's of no interest to us because it creates too many ``atoms" to be integrated (ie. nonvanishing points on the identity).
\subsection{Local Lie Groupids}\label{locg}
We will find it useful to make use of local Lie groupoids (see eg. \cite{cabr}), which is what we get when we restrict the space of arrows of a groupoid to a neighborhood of the identity bisection. As a result, not all arrows which are composable in the groupoid are composable in the local groupoid, however ones which are close enough to the identity bisection are. 
\begin{definition}
Let $G\rightrightarrows X$ be a Lie groupoid. Let $U$ be a neighborhood of $X\subset G^{(1)}$ which is closed under inversion. We call this a local Lie groupoid and denote it by $G_{\textup{loc}}\rightrightarrows X.$   
\end{definition}
For the constructions we want to make there isn't much of a difference between a local Lie groupoid and a groupoid. We can think of a morphism of local Lie groupoids $f:H_{\textup{loc}}\to G_{\textup{loc}}$ as one which satisfies $f(h_1\cdot h_2)=f(h_1)\cdot f(h_2)$ whenever the composition on the left side makes sense in $H_{\textup{loc}}.$ More precisely:
\begin{definition}
We have a simplicial manifold $G_{\textup{loc}}^{\bullet}$ given by the largest sub-simplicial manifold of $G^{\bullet}$ such that  $G_{\textup{loc}}^{0}=X,\,G_{\textup{loc}}^{1}=U.$
\end{definition}
In general, when we speak of the local groupoid we will assume that the fibers of $s:U\to X$ are contractible.
\\\\
Using this definition of the nerve, all of the structures defined in the previous sections naturally carry over to the local groupoid. Furthermore, we will be leaving $U$ implicit when we talk about local groupoids and thus we won't clearly distinguish between different local Lie groupoids (since all we really care about is the germ near the identity bisection).
\begin{exmp}\label{locp}
An open cover $\{V_i\}_i$ of $X$ determines a local groupoid $\textup{Pair}\,\textup{X}_{\textup{loc}}\rightrightarrows X,$ where
\begin{equation}
   \textup{Pair}^{(n)}\,X_{\textup{loc}}= \{(x_0,\ldots,x_n)\in X^{n+1}:\, \{x_0,\ldots,x_n\}\subset V_i\;\textup{for some }\, V_i\}\;.
\end{equation}
\end{exmp}
\section{The van Est Map}\label{ve1}
In this section we will state a generalization of the van Est map — we define it in a simpler way than the original was defined (see \cref{vanest}). This is a review of work done by the author in \cite{Lackman3}. In \cref{ve2} we will generalize it further to a graded version. For now, we will define the van Est map to be a differentiation map from normalized cochains on Lie groupoids to multilinear maps on Lie algebroids: \begin{equation}
    VE_0:C^n_0(G)\to C^n(\mathfrak{g})\;.
    \end{equation}
Recall that a vector $\xi\in\mathfrak{g}$ at a point $x\in X$ is a vector tangent to the source fiber of $G$ at $x.$ Now, given an $n$-cochain $\Omega$ and a point $x\in X,$ we can restrict 
\begin{equation}
    \Omega:\underbrace{G\sideset{_s}{_{s}}{\mathop{\times}} \cdots\sideset{_s}{_{s}}{\mathop{\times}} G}_{n \text{ times}}\to\mathbb{R}
\end{equation}to a map
\begin{equation}
    \Omega_x:\underbrace{s^{-1}(x){\mathop{\times}}  \cdots{\mathop{\times}} s^{-1}(x)}_{n \text{ times}}\to\mathbb{R}\,,
\end{equation}so it makes sense to independently differentiate $\Omega$ in each of the $n$ components.
\begin{definition}
Let $G\rightrightarrows X$ be a Lie groupoid and $\mathfrak{g}\to X$ its corresponding Lie algebroid. For each $n\ge 1$ we define a map 
\begin{equation}
    VE{_0}:C^n_0(G)\to C^n(\mathfrak{g})\,,\;\; \Omega\mapsto VE_0(\Omega)
\end{equation}
as follows: for $\xi_1,\ldots,\xi_n\in\mathfrak{g}$ over $x\in X,$ we let \begin{equation}\label{same}
    VE{_0}(\Omega)(\xi_1,\ldots,\xi_n)=\xi_n\cdots\xi_1\Omega_x\;,
    \end{equation}
where $\xi_i$ differentiates in the $ith$ component of $\Omega_x.$ If $\Omega$ is a $0$-cochain we define $VE{_0}(\Omega)=\Omega.$
\end{definition}
\begin{remark}
This definition makes sense on $G_{\textup{loc}}$ as well, without modification.
\end{remark}
\begin{lemma}
The image of $\mathcal{A}^n_0G^*$ under $VE_0$ is contained in $\Gamma(\Lambda^n\mathfrak{g}^*)$ (in addition, it surjects onto this space).
\end{lemma}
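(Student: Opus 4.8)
The plan is to show two things separately: (i) if $\Omega \in \mathcal{A}^n_0 G^*$ then $VE_0(\Omega)$ is antisymmetric under $S_n$, hence lies in $\Gamma(\Lambda^n \mathfrak g^*)$; and (ii) every $n$-form $\omega \in \Gamma(\Lambda^n \mathfrak g^*)$ arises this way, giving surjectivity. For (i), fix $x \in X$ and vectors $\xi_1, \dots, \xi_n \in \mathfrak g_x$, so that $VE_0(\Omega)(\xi_1, \dots, \xi_n) = \xi_n \cdots \xi_1 \Omega_x$ where each $\xi_i$ differentiates in the $i$th source-fiber component. The permutation action of $S_n \subset S_{n+1}$ on $G^{(n)}$ simply permutes the $n$ arrows, so on $\Omega_x$ it permutes the $n$ arguments; differentiating and using that the vector fields extending the $\xi_i$ can be chosen in separate components (so the derivations in distinct slots commute when evaluated at the identity bisection — this is where one must be slightly careful, see below) shows that transposing $\xi_i \leftrightarrow \xi_{i+1}$ multiplies $\xi_n \cdots \xi_1 \Omega_x$ by $-1$ precisely when $\Omega$ is $S_n$-antisymmetric. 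Since $\mathcal{A}^n_0 G^* = S^n_0 G^* \oplus \Lambda^n G^*$ by the preceding lemma, and one checks that $VE_0$ kills the symmetric summand $S^n_0 G^*$ (a normalized completely symmetric cochain has all first-order source-fiber derivatives vanishing in at least one slot, by normalization, so the iterated derivative is zero), the image of $\mathcal{A}^n_0 G^*$ equals the image of $\Lambda^n G^*$, which lands in $\Gamma(\Lambda^n \mathfrak g^*)$.

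For (ii), surjectivity, I would construct an explicit section. Given $\omega \in \Gamma(\Lambda^n \mathfrak g^*)$, choose (locally, then patch, or use the pair-groupoid model directly) an exponential-type map identifying a neighborhood of the identity bisection in each source fiber with a neighborhood of $0$ in $\mathfrak g_x$; pulling $\omega$ (viewed as a constant-coefficient antisymmetric multilinear form in these linear coordinates on $\mathfrak g_x$, or rather the linear function $(g_1, \dots, g_n) \mapsto \omega(\log g_1, \dots, \log g_n)$ wedged appropriately) back along this identification produces an $n$-cochain $\Omega$ on $G_{\textup{loc}}$. One then antisymmetrizes under $S_{n+1}$ to land in $\Lambda^n G^* \subset \mathcal{A}^n_0 G^*$, and verifies $VE_0(\Omega) = \omega$ by computing the $n$-th order source-fiber jet: the lowest-order term of $\Omega$ in the fiber coordinates is exactly the multilinear form $\omega$, and higher-order terms contribute nothing to $\xi_n \cdots \xi_1 \Omega_x$ because that operator extracts precisely the multilinear (total degree $n$, degree one in each slot) part. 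For the pair groupoid $\textup{Pair}\,X$ this is completely concrete: $\mathfrak g = TX$, and $\omega = \sum f_I \, dx^{i_1} \wedge \cdots \wedge dx^{i_n}$ lifts to $(x_0, \dots, x_n) \mapsto$ the completely antisymmetrized version of $f_I(x_0)(x_1 - x_0)^{i_1} \cdots (x_n - x_0)^{i_n}$, generalizing the degree-one example $(x,y) \mapsto f(x)(y-x)$ already in the text.

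The main obstacle is the bookkeeping in step (i): verifying that the iterated derivation $\xi_n \cdots \xi_1$ applied to $\Omega_x$ at the identity is genuinely well-defined (independent of extensions of the $\xi_i$ to vector fields) and genuinely antisymmetric, i.e. that the non-commutativity of the derivations does not spoil the sign computation. The key point is that because $\Omega_x$ is a function of $n$ \emph{independent} source-fiber variables, one extends $\xi_i$ to a vector field acting only in the $i$th variable; derivations acting in distinct variables commute exactly, so $\xi_n \cdots \xi_1 \Omega_x$ is symmetric in the \emph{order of application} and depends only on the tuple of vectors, and the signs then come purely from the $S_n$-action permuting the slots of $\Omega$. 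Once this is set up cleanly, the antisymmetry of $VE_0(\Omega)$ is immediate from the antisymmetry of $\Omega$ under $S_n$, and the normalization/degeneracy vanishing handles the symmetric summand. Surjectivity in step (ii) is then routine given the explicit lift. I expect no difficulty beyond this; the statement is essentially a linearization/associated-graded computation, and the content is organizing the combinatorics of the $S_{n+1}$-action against the source-fiber Taylor expansion.
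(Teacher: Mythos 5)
Your overall route is the same as the paper's: use the decomposition $\mathcal{A}^n_0G^*=S^n_0G^*\oplus\Lambda^nG^*$, argue that $VE_0$ annihilates the symmetric summand, observe that $S_n$-antisymmetry of a cochain passes to $S_n$-antisymmetry of the iterated source-fiber derivative (derivations acting in distinct slots commute, as you correctly isolate), and exhibit an explicit lift for surjectivity. Your polynomial lift $f_I(x_0)(x_1-x_0)^{i_1}\cdots(x_n-x_0)^{i_n}$, completely antisymmetrized, is essentially the construction the paper invokes (``coordinates and a partition of unity''), and its convex-hull cocycle $\Omega(g_1,\ldots,g_n)=\int_{C_{(g_1,\ldots,g_n)}}\omega$ is an equivalent tubular-structure version; note only that with the paper's normalized antisymmetrization the multilinear part is unchanged, since an $S_n$-antisymmetric multilinear map is automatically $S_{n+1}$-antisymmetric, so no combinatorial factor appears.

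The one genuine flaw is your parenthetical justification that $VE_0$ kills $S^n_0G^*$: ``a normalized completely symmetric cochain has all first-order source-fiber derivatives vanishing in at least one slot, by normalization.'' Normalization does no such thing --- $(x,y)\mapsto f(x)(y-x)$ is normalized and its first source-fiber derivative at the identity is $f(x)\neq 0$. What actually kills the symmetric summand is the non-permutation part of the $S_{n+1}$-action (the transposition exchanging the common source with an arrow), which linearizes to $(v_1,\ldots,v_n)\mapsto(-v_1,v_2-v_1,\ldots,v_n-v_1)$ on $\mathfrak{g}^{\oplus n}$. For a normalized cochain the lowest-order jet along the source fibers is exactly the multilinear part $VE_0(\Omega)$, and it inherits invariance under this linearized action; by the paper's proposition that a multilinear map invariant under $A_{n+1}$ is automatically antisymmetric under $S_n$, the multilinear part of a completely symmetric normalized cochain is simultaneously symmetric and antisymmetric, hence zero. (Concretely for $n=1$ on $\textup{Pair}\,\mathbb{R}$: symmetry gives $\partial_2\Omega(x,x)=\partial_1\Omega(x,x)$ while $\Omega(x,x)\equiv 0$ gives $\partial_1\Omega(x,x)+\partial_2\Omega(x,x)=0$, forcing both to vanish.) With that substitution your argument closes; the remaining verifications are the jet computations the paper defers to \cite{Lackman3}.
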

This is due to the fact that smooth cochains in $S^n_0G^*$ get mapped to $0$ under $VE_0.$
\\\\Work done in \cite{Lackman3} proves that $VE{_0}$ is related to $VE$ via antisymmetrization, hence up to a factor of $n!$ they agree on normalized antisymmetric cochains.
\begin{exmp}
$VE_0$ applied to the cochains $(2.1.6)-(2.1.9)$ in example \ref{first} gives the first order terms of $(2.1.14)-(2.1.17).$ We will soon extend $VE_0$ so that it includes the second order terms.
\end{exmp}
\begin{exmp}
Let $\Omega=\Omega(x_0,y_0,x_1,y_1,x_2,y_2)$ be a $2$-cochain on $\textup{Pair}\,\mathbb{R}^2.$ Then
\begin{equation}
    VE_0(\Omega)(\partial_x\vert_{(x_0,y_0)},\partial_y\vert_{(x_0,y_0)})=\partial_{y_2}\partial_{x_1}\Omega(x_0,y_0,\cdot,\cdot,\cdot,\cdot)\vert_{(x _1,y_1)=(x_2,y_2)=(x_0,y_0)}\;.
\end{equation}
\end{exmp}
\begin{exmp}
Riemannian metrics correspond to normalized symmetric (but not completely symmetric)\footnote{This can be resolved by thinking of Riemannian metrics as exclusively defined on the tensor product of the tangent bundle with itself, not on the direct sum.} $2$-cochains $\Omega,$ which are positive in the sense that $\Omega(g,g)\ge 0\,.$ For example, take the standard metric $g\in C^2(T\mathbb{R}^2)$ on two dimensional Euclidean space:
\begin{equation}
    g((a,b),(c,d))=(a,b)\cdot (c,d)\;.
    \end{equation}
We get it by applying $VE_0$ to $\Omega\in C^2(\textup{Pair}\,\mathbb{R}^2),$
\begin{equation}
    \Omega(x_0,y_0,x_1,y_1,x_2,y_2)=(x_1-x_0,y_1-y_0)\cdot(x_2-x_0,y_2-y_0)\,.
\end{equation}
\end{exmp}
\begin{exmp}
Consider the $n$-cochain on $\textup{Pair}\,\mathbb{R}^n$ given by
\begin{equation}
    \Omega(x_0,x_1,\ldots,x_n)=\frac{1}{n!}\det{[x_1-x_0,\ldots,x_n-x_0]}\;,
\end{equation}
where $x_i=(x_i^1,\ldots,x_i^n).$ Then $VE_0(\Omega)=dx^1_0\wedge\cdots\wedge dx^n_0$ (see \cref{warning} for our convention for wedge products).
\end{exmp}
As a consequence of normalization of cochains, in any coordinate system on $s^{-1}(x),$ the $n$th-order Taylor expansions of $\Omega_x$ is determined by $VE_0(\Omega)$ at $x.$ In other words, $VE_0$ determines the $n$-jet of $\Omega_x.$ In particular, the following lemma is needed to justify our construction of the integral in \cref{int}:
\begin{lemma}\label{asymptoticsn}
Suppose that the source fibers of $G\rightrightarrows X$ are $n$-dimensional and that $\Omega\in \mathcal{A}^n_0G^*.$ Let $y=(y^1,\ldots,y^n)$ be coordinates on $s^{-1}(x)$ in a neighborhood of $x,$ whose coordinate is $x=(x^1,\ldots,x^n).$ This determines a coordinate system on the $n$-fold product $s^{-1}(x)\times\cdots\times s^{-1}(x)$ in a neighborhood of $x,$ written $(y_1,\ldots,y_n),$ where $y_i=(y_i^1,\ldots,y_i^n),\,1\le i\le n.$ 
\\\\Then the $n$th-order Taylor expansion of $\Omega_x$ centered at $x,$ evaluated at $(y_1,\ldots,y_n),$ is equal to
\begin{equation}
VE(\Omega)(\partial_{y^1},\ldots,\partial_{y^n})\det{[y_1-x,\ldots,y_n-x]}\;.
\end{equation}
 Here, $\partial_{y^{j}}$ is the coordinate vector field at the point $x.$
\end{lemma}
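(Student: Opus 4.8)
The plan is to reduce the statement to a purely multilinear-algebra identity by exploiting the complete antisymmetry under $S_{n+1}$, together with the fact that $VE_0$ records exactly the mixed $n$-jet data. First I would fix the point $x$ and work entirely inside $s^{-1}(x)\times\cdots\times s^{-1}(x)$ with the induced coordinates $(y_1,\ldots,y_n)$, $y_i = (y_i^1,\ldots,y_i^n)$, centered so that $x$ corresponds to the origin in each factor. Write $T_x\Omega$ for the $n$th-order Taylor polynomial of $\Omega_x$ at $x$. Because $\Omega$ is normalized, the Taylor expansion has no terms of total degree $< n$; because we only go up to order $n$, the only surviving terms are the genuinely multidegree-$(1,1,\ldots,1)$ monomials, i.e. $T_x\Omega$ is a sum $\sum_{j_1,\ldots,j_n} c_{j_1\cdots j_n}\, y_1^{j_1}\cdots y_n^{j_n}$, and by definition of $VE_0$ (equation \eqref{same}) we have $c_{j_1\cdots j_n} = VE_0(\Omega)(\partial_{y^{j_1}},\ldots,\partial_{y^{j_n}})$. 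So the first key step is: \emph{normalization kills all lower-order and higher-multidegree contributions, leaving a multilinear form whose coefficients are precisely the components of $VE_0(\Omega)$.}

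The second key step is to identify that multilinear form. Since $\Omega\in\mathcal{A}^n_0 G^*$, by the earlier lemma $VE_0(\Omega)\in\Gamma(\Lambda^n\mathfrak{g}^*)$, so the coefficient array $c_{j_1\cdots j_n}$ is totally antisymmetric in $(j_1,\ldots,j_n)$. A totally antisymmetric $n$-tensor in $n$ variables in dimension $n$ is a scalar multiple of the determinant: explicitly $\sum c_{j_1\cdots j_n} y_1^{j_1}\cdots y_n^{j_n} = c_{1\,2\,\cdots n}\,\det[y_1,\ldots,y_n]$, and with the wedge-product convention of \cref{warning} (no factorials after antisymmetrizing) one checks that $c_{1\,2\,\cdots n} = VE_0(\Omega)(\partial_{y^1},\ldots,\partial_{y^n})$ exactly — this is the same bookkeeping already verified in the example $\Omega(x_0,\ldots,x_n) = \frac1{n!}\det[x_1-x_0,\ldots,x_n-x_0]$, for which $VE_0(\Omega) = dx^1\wedge\cdots\wedge dx^n$. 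Recentering $y_i \mapsto y_i - x$ gives $\det[y_1-x,\ldots,y_n-x]$, which is the claimed formula.

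The main obstacle I anticipate is the second step — making the antisymmetry argument airtight while tracking the combinatorial constant through the chosen wedge convention. One has to be careful that the relevant antisymmetry is really full antisymmetry of the coefficient array (not just antisymmetry under the permutations that literally swap the arguments of $\Omega_x$): this is where the $A_{n+1}$-invariance, rather than mere $S_n$-invariance, is used, via \cref{same} (Proposition \ref{same}), which says that for maps that are pointwise homogeneous of degree $1$ in each slot, $A_{n+1}$-invariance upgrades to antisymmetry under $S_{n+1}$ and hence under $S_n$; the degree-1 homogeneity here is automatic because we are looking at the multidegree-$(1,\ldots,1)$ part of the Taylor polynomial. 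Once that is in place the determinant identification is standard linear algebra. I would close by remarking that the coordinate independence of the right-hand side is manifest, since both $VE_0(\Omega)(\partial_{y^1},\ldots,\partial_{y^n})$ and $\det[y_1-x,\ldots,y_n-x]$ transform by the Jacobian determinant, consistently.
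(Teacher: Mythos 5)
Your proposal is correct and follows essentially the argument the paper intends (the paper defers the proof to \cite{Lackman3}, but the surrounding text makes clear the mechanism is exactly yours): normalization forces the $n$th-order Taylor polynomial of $\Omega_x$ to be concentrated in multidegree $(1,\ldots,1)$ with coefficients given by $VE_0(\Omega)$, and the antisymmetry of $VE_0(\Omega)$ for $\Omega\in\mathcal{A}^n_0G^*$ turns that multilinear form into the determinant. Your third paragraph's concern is already discharged by the quoted lemma that $VE_0(\mathcal{A}^n_0G^*)\subset\Gamma(\Lambda^n\mathfrak{g}^*)$, so the appeal to Proposition~\ref{same} is not needed.
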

\begin{remark}
We can use the previous result to motivate an extension of $VE_0$ so that densities are in its image (essentially, by replacing the determinant with its absolute value when using completely symmetric cochains).
\end{remark}
Moving on, there is a standard definition of a differential on a Lie algebroid. However, keeping in the spirit of emphasizing the groupoid over the algebroid:
\begin{definition}\label{diffa}
Let $G\rightrightarrows X$ be a Lie groupoid. There is a differential 
\begin{equation}
    d:\Gamma(\Lambda^{\bullet}\mathfrak{g}^*)\to \Gamma(\Lambda^{\bullet+1}\mathfrak{g}^*)
\end{equation}
that is uniquely defined by the property that on $\Lambda^{\bullet}G^*$
\begin{equation}
    VE_0\,\delta^*=d\,VE_0\;.
\end{equation}
\end{definition}
\begin{exmp}
If $\mathfrak{g}=TX$ then $d$ is the exterior derivative.
\end{exmp}
We will now state a simple version of the van Est isomorphism theorem, proved by Crainic \cite{Crainic}. We will state a more general one in \cref{funm}, which will be needed when discussing Chern-Simons with finite gauge group.
\begin{theorem}(see \cite{Crainic}, \cite{Meinrenken})
Assume the source fibers of $G\rightrightarrows X$ are $n$-connected. Then $VE_0$
(restricted to completely antisymmetric cochains) defines an isomorphism between the cohomology of $G$ and the cohomology of $\mathfrak{g}$ up to degree $n,$ and is injective in degree $n+1.$\footnote{In these papers, it's stated without assuming complete antisymmetry because conventionally one just antisymmetrizes after differentiating.}
\end{theorem}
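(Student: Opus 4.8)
The plan is to derive this from the classical Crainic--Meinrenken double-complex argument. First I would use the quasi-isomorphisms already recorded above --- the inclusion $\Lambda^\bullet G^*\hookrightarrow C^\bullet(G)$, and, on the algebroid side, the fact that the Chevalley--Eilenberg complex $(\Gamma(\Lambda^\bullet\mathfrak g^*),d)$ is by definition the one computing $H^\bullet(\mathfrak g)$ --- to replace the ``completely antisymmetric'' models by the full differentiable complex $(C^\bullet(G),\delta^*)$ and the Chevalley--Eilenberg complex; the factor $n!$ relating $VE_0$ to the classical van Est map on completely antisymmetric cochains, and the difference between $C^\bullet_0$ and $C^\bullet$, both disappear in cohomology. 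So it suffices to prove the statement for $VE_0$ on the full complexes.

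The heart of the argument is a bicomplex built from the \emph{d\'{e}calage} of the nerve together with leafwise de Rham theory. I would set $C^{p,q}$ to be the smooth $q$-forms along the fibers of the face map $\delta_{p+1}:G^{(p+1)}\to G^{(p)}$, whose fibers are source fibers of $G$; let the vertical differential $d_{\mathcal F}$ be the leafwise exterior derivative, and let the horizontal differential be the alternating sum of the remaining face maps $\delta_0,\dots,\delta_p$, which one checks are fiberwise over $\delta_{p+1}$ (the only nontrivial check is $\delta_0$, which uses the groupoid multiplication), so that the two differentials commute up to sign. Then run the two spectral sequences of this bicomplex. Taking $d_{\mathcal F}$ first, the hypothesis enters through the foliated Poincar\'{e} lemma: since the source fibers of $G$ are $n$-connected, $H^q_{d_{\mathcal F}}(C^{p,\bullet})=0$ for $1\le q\le n$ while $H^0_{d_{\mathcal F}}(C^{p,\bullet})=C^\infty(G^{(p)})$, so this spectral sequence collapses onto the line $q=0$ through total degree $n$ and identifies $H^k_{\mathrm{tot}}$ with $H^k(C^\bullet(G),\delta^*)=H^k(G)$ for $k\le n$; in total degree $n+1$ no differential can reach $E^{n+1,0}$, so $H^{n+1}(G)$ injects into $H^{n+1}_{\mathrm{tot}}$. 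Taking $\delta$ first, the extra degeneracy of the d\'{e}calage supplies a simplicial contracting homotopy showing $E_1^{p,q}=0$ for $p\ge 1$ and $E_1^{0,q}=\Gamma(\Lambda^q\mathfrak g^*)$ (obtained by restricting a leafwise form to the identity bisection), with the $d_1$ differential being the algebroid differential $d$ of \cref{diffa} --- this is the computation that yields $VE_0\,\delta^*=d\,VE_0$ --- so this spectral sequence identifies $H^\bullet_{\mathrm{tot}}$ with $H^\bullet(\mathfrak g)$, with no connectivity hypothesis. Composing the two edge homomorphisms gives an isomorphism $H^k(G)\xrightarrow{\sim}H^k(\mathfrak g)$ for $k\le n$ and an injection in degree $n+1$, and tracing the zig-zag through the bicomplex shows this composite is exactly (the antisymmetrization of) $VE_0$.

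The step I expect to be the real obstacle is precisely this last identification: that the horizontal contracting homotopy, iterated and composed with restriction to the identity bisection, computes $VE_0$ and carries $d_{\mathcal F}$ to the algebroid differential of \cref{diffa}; that is where essentially all of the bookkeeping lives, and it is also the only place where the specific simplicial structure of the nerve from \cref{diff} --- in particular the slightly nonstandard face map $\delta_0$ --- really matters. Everything downstream (the Poincar\'{e} lemma, the degeneration, the first-quadrant edge-map formalism, and the passage between the antisymmetric and full models via the quasi-isomorphisms above) is routine. A complete account is given in \cite{Crainic} and \cite{Meinrenken}; and by the remark following the definition of $VE_0$, the same argument applies verbatim to a local groupoid $G_{\mathrm{loc}}$ with contractible source fibers.
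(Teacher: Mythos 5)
The paper gives no proof of this theorem at all --- it is quoted directly from Crainic and Li-Bland--Meinrenken --- and your sketch is a correct outline of exactly the double-complex/spectral-sequence argument of those references: the bicomplex of leafwise forms on the nerve, the foliated Poincar\'{e} lemma using $n$-connectivity of the source fibers (which is precisely what collapses the first spectral sequence onto the $q=0$ row through total degree $n$ and leaves $E^{n+1,0}$ untouched, giving injectivity in degree $n+1$), the simplicial contracting homotopy collapsing the second onto the algebroid complex, and the passage between the completely antisymmetric and full models via the stated quasi-isomorphism. The step you flag as the real work --- identifying the composite edge homomorphism with $VE_0$ and the $d_1$ differential with the algebroid differential, using the specific face map $\delta_0$ of this paper's nerve convention --- is indeed where all the content lives, and it is carried out in the cited papers, so your proposal matches the intended argument.
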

In the context of $TX,$ $VE_0$ defines an isomorphism between the cohomology of $\text{Pair}\,X_{\textup{loc}}$ and the de Rham cohomology of $X.$ See \cref{diff}.
\section{Generalized Riemann Sums}
The construction of the functional integral we would like to make suggests a generalized notion of Riemann sums on manifolds.\footnote{There are other things called generalized Riemann sums and generalized Riemann integrals in the literature, but these are different from what we define.} Recall that Riemann sums on manifolds are typically defined only locally, not globally. We will recall the global definition, defined originally by the author in \cite{weinstein1}. Before doing so, we will motivate the definition, but first we recall some basic algebraic topology:
\begin{definition}\label{trian}
A smooth triangulation of a compact manifold (with boundary) $M$ is given by a simplicial complex $\Delta_M,$ together with a homeomorphism $|\Delta_M|\to M$ from the geometric realization to $M,$ such that the restriction of this map to each simplex is a smooth embedding. We denote the set of smooth triangulations of $M$ by $\mathcal{T}_M.$
\end{definition}
For some of the constructions we do, the actual map $|\Delta_M|\to M$ doesn't matter except for on vertices, and thus we will often leave this map implicit and refer to $\Delta_M$ as a triangulation. Associated to every triangulation $\Delta_M$ is a simplicial set, obtained by picking a total ordering of the vertices. If $M$ is oriented we should choose an ordering which is compatible with the orientation. We won't distinguish $\Delta_M$ from this simplicial set.
\\\\We will assume standard facts about triangulations of compact manifolds (with boundary), eg. they always exist and triangulations of the boundary can be extended to the entire manifold. See Munkres \cite{munk} for details.
\\\\Let $M$ be a manifold and consider an $n$-cochain $\Omega$ on $\textup{Pair}\,\textup{M}.$ Suppose we have an embedding of the standard $n$-simplex $|\Delta^n|$ inside $M$ and we want to assign a number to it — we could do this by choosing an ordering of the vertices $(v_0,\ldots,v_{n})$ and assigning it the value $\Omega(v_0,\ldots,v_n).$ However, in order for this quantity to be independent of the ordering we need $\Omega$ to be completely symmetric. If in addition we want something like local absolute continuity with respect to the Lebesgue measure, then $\Omega$ should be normalized as well. 
\\\\Assuming $\Omega$ is completely symmetric, we can assign a value to a given triangulation $\Delta_M$ by forming the generalized Riemann sum
\begin{equation}\label{summ}
   \sum_{\Delta \in\Delta_M }\Omega(\Delta)\;.
\end{equation}
Here we sum over all simplices, making sure to sum over degenerate simplices only once (in the case that $\Omega$ isn't normalized). We can ``integrate" $\Omega$ by taking a direct limit over triangulations.
\\\\To be precise, we take the limit in the sense of nets, over an equivalence class of triangulations:
\begin{definition}\label{equiv}
Two triangulations are equivalent if they have a common linear subdivision\footnote{That is, subdivisions of a simplex $|\Delta|\xhookrightarrow{} |\Delta_M|$ are induced by linear subdivisions of $|\Delta|.$} (eg. barycentric subdivisions of a given triangulation). The triangulations in an equivalence class form a directed set ordered by linear subdivision.\end{definition}
In great generality these limits are independent of the equivalence class chosen, as we will see in the next section.
\\\\On the other hand, if $\Omega$ is not completely symmetric but is only invariant under $A_{n+1}$ and well-defined on degeneracies, then in order to assign a value to an $n$-simplex we need a choice of ordering of the vertices, up to even permutation. An orientation of $M$ induces an ordering (up to even permutation) of the vertices of each $n$-simplex of the triangulation of $M,$\footnote{For example, by picking a vertex and looking at the orientation of the tangent vectors at that vertex corresponding to the $1$-dimensional faces.} and therefore we can still use \ref{summ}. 
\begin{definition}\label{deff}
Let $\Omega\in \mathcal{A}^n \textup{Pair}\,\textup{M}^*_{\textup{loc}},$\footnote{Recall the equality $\mathcal{A}^n G^*=S^n G^*\oplus\Lambda^n G^*.$} where $M$ is compact $n$-dimensional (with boundary, and oriented if necessary). We define 
\begin{equation}
\int_M\Omega=\lim_{\Delta_M\in \mathcal{T}_M}   \sum_{\Delta \in\Delta_M }\Omega(\Delta)
\end{equation}
if the limit of \ref{summ} exists over each equivalence class of triangulations and is independent of the equivalence class.
\end{definition}
\begin{remark}
If a cochain $\Omega$ is to define a finite measure, it should (in particular) have at most countably many atoms, ie. points on the identity bisection where $\Omega$ doesn't vanish.
\end{remark}
\begin{exmp}
Let $f,g:[0,1]\to\mathbb{R}.$ Let $\Omega$ be the $1$-cochain on $\textup{Pair}\,[0,1]$ given by $s^*f(t^*g-s^*g).$ Then for a triangulation $0=x_0<x_1<\cdots<x_n=1$ we have that
\begin{equation}
    \sum_{\Delta}\Omega(\Delta)=\sum_{i=0}^{n-1}f(x_i)(g(x_{i+1})-g(x_i))\;.
\end{equation}
This converges to the Riemannn-Stieltjes integral $\int_0^1 f\,dg\,.$
\end{exmp}
\begin{exmp}\label{eulerc}\textbf{(Euler Characteristic)}
Let $M$ be an $n$-dimensional compact manifold (with boundary) and let $\Omega\in S^n\textup{Pair}\,\textup{M}^*$ be defined by 
\begin{equation}
\Omega(x_0,\ldots,x_n)=(-1)^{|\{x_0,\ldots,x_n\}|+1}\;.
\end{equation}
Then $\int_M\Omega=\chi(M)\,.$
\end{exmp}
$\,$\\
Of course, this $\Omega$ doesn't define a \textit{countably} additive signed measure. In the next section we will discuss Gauss-Bonnet as well.
\\
\begin{exmp}
Let $M$ be an $n$-dimensional manifold and let $m\in M.$ We have an $n$-cochain $\Omega$ on $\textup{Pair}\,\textup{M}$ given by $\Omega(m_0,\ldots, m_n)=1$ if $m_i=m$ for all $0\le i\le n,$ and $0$ otherwise. Let $f:M\to\mathbb{R}$ and let $s$ denote the common source map $\textup{Pair}^{(n)}\,M\to M.$ We have that, for any triangulation $\Delta_M$ having the point $m$ as a vertex, 
\begin{equation}
    \sum_{\Delta\in\Delta_M}s^*f\,\Omega(\Delta)=f(m)\;.
\end{equation}
Therefore, the limit is $f(m)\,.$ Thus, $\Omega$ is an $n$-cochain representing the Dirac measure concentrated at $m.$
\end{exmp}
If $\Omega$ is not just invariant under even permutations, but actually completely antisymmetric, we get the following result:
\begin{lemma}(Triangulated Stokes' Theorem)\label{stok}
Let $M$ be a compact oriented $(n+1)$-manifold with boundary and let $\Omega$ be a completely antisymmetric $n$-cochain. Let $\Delta_M$ be a triangulation of $M$ with induced triangulation $\Delta_{\partial M}$ of the boundary. Then
   \begin{equation}
    \sum_{\Delta \in\Delta_{\partial M }}\Omega(\Delta^n)=\sum_{\Delta \in\Delta_{M }}\delta^* \Omega(\Delta^{n+1})\,,
\end{equation} 
where $\delta^*$ is the groupoid differential on $\textup{Pair}\,\textup{M}.$
\end{lemma}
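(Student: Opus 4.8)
The plan is to reduce the statement to the ordinary simplicial Stokes' theorem for the cochain complex of the simplicial set $\Delta_M$, using the fact that a completely antisymmetric cochain on $\textup{Pair}\,M$ pulls back, via the inclusion $\iota:\Delta_M\to\textup{Pair}\,M$ of vertices, to a simplicial cochain, and that the groupoid differential $\delta^*$ defined in \cref{diffg} is (up to the normalization convention) the simplicial coboundary. Concretely: fix the total ordering of vertices of $\Delta_M$ coming from the orientation, so that each $(n+1)$-simplex $\Delta^{n+1}\in\Delta_M$ is an ordered tuple $(v_0,\dots,v_{n+1})$ of points of $M$, hence an element of $\textup{Pair}^{(n+1)}M$; likewise each boundary $n$-simplex is an ordered tuple in $\textup{Pair}^{(n)}M$. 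Then $\sum_{\Delta\in\Delta_{\partial M}}\Omega(\Delta^n)$ is the evaluation of the simplicial cochain $\iota^*\Omega$ on the fundamental cycle of the triangulated boundary, and the right-hand side is the evaluation of $\iota^*(\delta^*\Omega)$ on the fundamental cycle of $\Delta_M$.

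First I would unwind the face maps. By \cref{diff}, for $k\ge 1$ the map $\delta_k$ drops the $k$-th arrow, and $\delta_0(g_1,\dots,g_{n+1})=(g_1^{-1}g_2,\dots,g_1^{-1}g_{n+1})$. Identifying $\textup{Pair}^{(m)}M$ with $M^{m+1}$ via $(x_0,\dots,x_m)$, the arrow $g_i$ is $(x_0,x_i)$, so $g_1^{-1}g_i=(x_1,x_i)$; hence $\delta_0(x_0,\dots,x_{n+1})=(x_1,\dots,x_{n+1})$ and $\delta_k(x_0,\dots,x_{n+1})=(x_0,\dots,\widehat{x_k},\dots,x_{n+1})$ for $k\ge 1$. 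Thus $\iota^*\delta_i$ is exactly the $i$-th face operator of the ordered simplicial complex $\Delta_M$, and $\iota^*\delta^*\Omega$ on an $(n+1)$-simplex $(v_0,\dots,v_{n+1})$ equals $\frac{1}{(n+1)!}\sum_{i=0}^{n+1}(-1)^i\,\Omega\big(\partial_i(v_0,\dots,v_{n+1})\big)$. (One must check the index range and the $(n+1)!$ factor: since $\Omega$ is completely antisymmetric it is normalized, so one can rewrite $\delta^*$ restricted to $\Lambda^n$ in the honest alternating-sum form $\sum_{i=0}^{n+1}(-1)^i\delta_i^*$ without the combinatorial prefactor — this is precisely the content of the conceptual remark that $\delta^*=\mathrm{Alt}\,p_1^*$ and of the quasi-isomorphism statement; I would cite that normalization explicitly.)

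Next I would assemble the global sum. Summing $\iota^*\delta^*\Omega$ over all $(n+1)$-simplices of $\Delta_M$ with the signs dictated by the orientation is, by definition, $\langle \iota^*\delta^*\Omega,\,[\Delta_M]\rangle$ where $[\Delta_M]\in C_{n+1}(\Delta_M;\mathbb{Z})$ is the simplicial fundamental chain relative to the boundary; dualizing, this equals $\langle \iota^*\Omega,\,\partial[\Delta_M]\rangle$. The standard combinatorial fact that $\partial[\Delta_M]=[\Delta_{\partial M}]$ — every interior $n$-face is shared by exactly two $(n+1)$-simplices with opposite induced orientations, so all interior contributions cancel and only boundary faces survive, each counted once with the boundary orientation — then gives $\langle\iota^*\Omega,[\Delta_{\partial M}]\rangle=\sum_{\Delta\in\Delta_{\partial M}}\Omega(\Delta^n)$, which is the left-hand side. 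Here complete antisymmetry of $\Omega$ is what makes $\iota^*\Omega$ a well-defined \emph{oriented} simplicial cochain (so that the cancellation of oppositely-oriented interior faces is meaningful), and it is also what makes $\delta^*\Omega$ completely antisymmetric, so that the right-hand side is itself an honest oriented sum.

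I expect the main obstacle to be purely bookkeeping rather than conceptual: keeping the factorial normalization in \cref{diffg} consistent with the sign-alternating simplicial coboundary, and being careful that the orientation-induced ordering of vertices is coherent across adjacent simplices so that the interior cancellation in $\partial[\Delta_M]=[\Delta_{\partial M}]$ genuinely holds. Once one fixes the convention that on normalized (hence on completely antisymmetric) cochains $\delta^*$ is represented by $\sum_i(-1)^i\delta_i^*$, the proof is a one-line application of the combinatorial Stokes/boundary formula $\langle c,\partial z\rangle=\langle\delta^* c,z\rangle$ together with $\partial[\Delta_M]=[\Delta_{\partial M}]$.
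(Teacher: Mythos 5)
Your proposal is correct and is essentially the paper's argument written out in full: the paper's entire proof is the single observation that, by complete antisymmetry, each interior $n$-face appears twice with opposite induced orientations and so cancels, which is exactly your step $\partial[\Delta_M]=[\Delta_{\partial M}]$ combined with the duality $\langle \delta^*c,z\rangle=\langle c,\partial z\rangle$ after identifying $\iota^*\delta_i$ with the $i$-th simplicial face operator. The normalization caveat you flag is genuine (the combinatorial prefactor in Definition \ref{diffg} must reduce, on completely antisymmetric cochains, to the honest alternating sum $\sum_i(-1)^i\delta_i^*$ for the stated equality to hold on the nose), but the paper elides this point as well.
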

\begin{proof}
Since $\Omega$ is completely antisymmetric, this follows from the fact that an $n$-dimensional face in the interior appears as a face twice, with opposite orientations.
\end{proof}
\begin{comment}
We defined the groupoid differential in \ref{diff}. It is given by the alternating sum of the pullbacks of the face maps, on $\textup{Pair}\,\textup{M}$ this is 
\begin{align}
    &\nonumber\delta^*\Omega(m_0,\ldots,m_{n+1})
    \\&=\Omega(m_1,m_2,\ldots,m_{n+1})-\Omega(m_0,m_2,\ldots,m_{n+1})+\cdots+(-1)^{n+1}\Omega(m_0,m_1,\ldots,m_{n}).
\end{align}
For full generality, \ref{summ} makes sense for any $\Omega$ as long as we have an ordering of all of the vertices in the triangulation, but with this level of generality the behaviour of the sums is not good unless we at least assume $\Omega$ is symmetric or antisymmetric. Though complete symmetry/antisymmetry is nicer.
\end{comment}
\section{Fundamental Theorem of Calculus on Manifolds and the Main Corollary}\label{funm}
Let $\omega$ be an $n$-form on an oriented $n$-manifold $M$ (with boundary). Given the discussion in the previous section, we can assign a Riemann sum to $\omega$ by triangulating $M$ and assigning to $\omega$ a normalized cochain that is invariant under even permutations, denoted $\Omega.$ The defining property of $\Omega$ is that $VE{_0}(\Omega)=\omega.$ 
\\\\Of course, Stokes' theorem and the Poincar\'{e} lemma are usually regarded as the analogues of the fundamental theorem of calculus on manifolds (which the following section doesn't use prior knowledge of). However, we will state a further generalization, one that is more natural in the context we are working in. In addition, the language makes sense in the non-smooth category and can be used to understand the fundamental theorem of calculus for the Riemann-Stieltjes integral. We include a part $0.$
\begin{theorem}(Part 0)\label{int}
Let $M$ be an oriented compact $n$-dimensional manifold (with boundary), let $\omega$ be an $n$-form on $M$ and let 
\begin{equation}
    \Omega\in \mathcal{A}_0^n\textup{Pair}\,\textup{M}^*_{\textup{loc}}
\end{equation}
satisfy $VE{_0}(\Omega)=\omega.$ Then
\begin{equation}
   \lim\limits_{\Delta_M\in\mathcal{T}_M} \sum_{\Delta \in\Delta_M }\Omega(\Delta)=\int_M \omega\;,
\end{equation}
where the limit is taken in the sense of \ref{deff}.
\end{theorem}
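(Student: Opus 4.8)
The plan is to reduce the statement to the classical fact that Riemann sums over simplicial subdivisions converge to the integral, the bridge being \cref{asymptoticsn}: on each small simplex of a triangulation it lets us replace the a priori ``wrong'' value $\Omega(\Delta)$ by its $n$-th order Taylor expansion along the source fibers, whose leading part is exactly the value of $\omega=VE_0(\Omega)$ on the edge vectors of the simplex. Since $\Omega$ is only defined on the local groupoid $\textup{Pair}\,M_{\textup{loc}}$, I would first note that, by compactness of $M$ and the Lebesgue number lemma applied to the open cover of $M$ underlying $\textup{Pair}\,M_{\textup{loc}}$ (see \cref{locp}), every triangulation of sufficiently small mesh has all vertices of each of its simplices inside a single coordinate chart; as only cofinal refinements matter we may assume this, and argue chart by chart.

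Fix, then, a positively oriented $n$-simplex $\Delta$ of such a triangulation with vertices $v_0,\dots,v_n$ lying in a chart with coordinates $y=(y^1,\dots,y^n)$, and write $\omega=f\,dy^1\wedge\cdots\wedge dy^n$ on that chart. Regarding $(v_0,\dots,v_n)$ as a point of $\textup{Pair}^{(n)}M$ with common source $v_0$, \cref{asymptoticsn} gives
\[
\Omega(v_0,\dots,v_n)=VE_0(\Omega)(\partial_{y^1},\dots,\partial_{y^n})\big|_{v_0}\;\det[\,v_1-v_0,\dots,v_n-v_0\,]+R(\Delta),
\]
where $R(\Delta)$ is the Taylor remainder beyond order $n$, hence $|R(\Delta)|\le C_1(\operatorname{diam}\Delta)^{n+1}$ with $C_1$ uniform over the finite atlas (using smoothness of $\Omega$ and compactness of $M$). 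By hypothesis $VE_0(\Omega)=\omega$, so with the wedge convention of \cref{warning} the first factor equals $f(v_0)/n!$, while the determinant equals $n!\operatorname{vol}(|\Delta|)$, positive because the ordering is orientation-compatible; thus the leading term is $f(v_0)\operatorname{vol}(|\Delta|)$. Since $f$ is $C^1$, this differs from $\int_{|\Delta|}\omega=\int_{|\Delta|}f\,dy^1\cdots dy^n$ by at most $C_2\operatorname{diam}(\Delta)\operatorname{vol}(|\Delta|)$, which is again $O((\operatorname{diam}\Delta)^{n+1})$. Altogether $\big|\Omega(\Delta)-\int_{|\Delta|}\omega\big|\le C(\operatorname{diam}\Delta)^{n+1}$ with $C$ independent of $\Delta$ and of the triangulation.

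Summing over all $n$-simplices of $\Delta_M$ and using that $\sum_{\Delta}\int_{|\Delta|}\omega=\int_M\omega$ holds exactly, I get
\[
\Big|\sum_{\Delta\in\Delta_M}\Omega(\Delta)-\int_M\omega\Big|\le C\sum_{\Delta\in\Delta_M}(\operatorname{diam}\Delta)^{n+1}.
\]
To finish, one bounds the right-hand side by $(C/c)\operatorname{mesh}(\Delta_M)\operatorname{vol}(M)$ using a uniform lower ``fatness'' bound $\operatorname{vol}(|\Delta|)\ge c\,(\operatorname{diam}\Delta)^{n}$ for the simplices together with $\sum_\Delta\operatorname{vol}(|\Delta|)=\operatorname{vol}(M)$; for barycentric (and other regular) subdivisions of a fixed triangulation such a $c$ exists depending only on $n$ and the base triangulation, and the mesh tends to $0$ along them, so the bound tends to $0$. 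Since the limiting value is $\int_M\omega$ for every such subdivision family, the limit in the sense of \cref{deff} exists and equals $\int_M\omega$, in particular independently of the equivalence class.

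I expect the fatness bound in the last step to be the main obstacle, and the reason the limit in \cref{deff} is organized around linear-subdivision equivalence classes rather than arbitrary refinements: the per-simplex estimate from \cref{asymptoticsn} is $O((\operatorname{diam}\Delta)^{n+1})$ regardless of the \emph{shape} of $\Delta$, but without a fatness bound arbitrarily many simplices of a fixed diameter can occur (a fan of thin simplices about a vertex), so $\sum_\Delta(\operatorname{diam}\Delta)^{n+1}$ need not vanish and the generalized Riemann sums can diverge. The remaining ingredients — that each simplex eventually lies in a single chart and in the domain of the local groupoid, and that the Taylor-remainder constants are uniform — are routine given compactness of $M$ and smoothness of $\Omega$ and $\omega$.
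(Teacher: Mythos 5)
Your overall strategy is the same as the paper's: write $\Omega(\Delta)$ as its $n$th-order Taylor polynomial along the source fibers (which, by \cref{asymptoticsn} and the hypothesis $VE_0(\Omega)=\omega$, is exactly $\omega$ evaluated on the edge vectors, i.e. $f(v_0)\operatorname{vol}(|\Delta|)$ in oriented coordinates) plus a Taylor remainder, observe that the leading terms form a classical Riemann sum converging to $\int_M\omega$, and show that the remainders sum to zero. The paper writes this out only for $n=1$ and asserts that the higher-dimensional case is similar; you have correctly put your finger on the one place where it is \emph{not} simply similar, namely that the per-simplex remainder bound $C(\operatorname{diam}\Delta)^{n+1}$ can only be converted into $(\text{mesh})\cdot\operatorname{vol}(M)$ via a lower fatness bound $\operatorname{vol}(|\Delta|)\ge c\,(\operatorname{diam}\Delta)^n$. (For $n=1$ this is vacuous since volume equals diameter, which is why the paper's written argument closes.)

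However, the way you close this gap does not work as stated, for two concrete reasons. First, iterated barycentric subdivision — the very example \cref{equiv} uses to generate an equivalence class — does \emph{not} admit a uniform fatness constant: it is classical that the simplices of repeated barycentric subdivisions degenerate in shape for $n\ge 2$, and your own estimate already fails there, since one subdivision step multiplies the number of simplices by $(n+1)!$ while shrinking diameters only by $n/(n+1)$, so the bound $\sum_\Delta(\operatorname{diam}\Delta)^{n+1}\le ((n+1)!)^k\bigl((n/(n+1))^k d\bigr)^{n+1}$ grows like $(48/27)^k$ already for $n=2$. Second, even granting a shape-regular cofinal chain (e.g. edgewise/Freudenthal subdivisions, which do produce only finitely many similarity classes), convergence along one chain is not convergence of the net in the sense of \cref{deff}: that requires the estimate to hold for \emph{every} linear subdivision refining some fixed $\Delta_0$, and arbitrary refinements contain fans of thin simplices (subdivide one $2$-simplex by joining a vertex to $N$ points of the opposite edge) for which $\sum_\Delta(\operatorname{diam}\Delta)^{n+1}\gtrsim N d^{n+1}$ is unbounded. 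What is actually needed is a per-simplex remainder bound of the form $\operatorname{vol}(|\Delta|)\cdot\varepsilon(\text{mesh})$ rather than $(\operatorname{diam}\Delta)^{n+1}$ — this is exactly what the paper's one-dimensional mean-value argument produces, and obtaining its analogue for thin $n$-simplices (exploiting that cochains in $\mathcal{A}^n_0$ vanish on all degenerate tuples, not merely at the diagonal) is the substantive content hidden in the phrase ``the higher dimensional case is similar.'' As written, your final step is therefore a genuine gap rather than a routine verification.
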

\begin{proof}
This was proved by the author in \cite{Lackman3} and uses \cref{asymptoticsn}. We will repeat the $1$-dimensional case here, the higher dimensional case is similar. 
\\\\The result is true if and only if it's true locally, so we can assume we are on an interval $[0,1].$ Let $f\,dx$ be a $1$-form and let $\Omega$ be a normalized cochain on $\textup{Pair}\,[0,1]$ such that $VE_0(\Omega)=f\,dx.$ We then have that the cochain
\begin{equation}
    (x,y)\mapsto \Omega(x,y)-f(x)(y-x)
\end{equation}
maps to $0$ under $VE_0,$ and for a triangulation $0=x_0<x_1,\ldots<x_n=1$ we can write
\begin{equation}
    \sum_{i=0}^{n-1}\Omega(x_i,x_{i+1})=\sum_{i=0}^{n-1}f(x_i)(x_{i+1}-x_i)+\sum_{i=0}^{n-1}\big[\Omega(x_i,x_{i+1}) -f(x_i)(x_{i+1}-x_i)\big]\;.
\end{equation}
The first term on the right converges to $\int_0^1f\,dx,$ so we just need to show that the second term converges to $0.$ From Taylor's theorem and the fact that $\Omega(x,x)=0,$ we have that for $y> x$
\begin{equation}
    \Omega(x,y)-f(x)(y-x)=(y-x)\,\frac{\partial}{\partial y'} \big[\Omega(x,y')-f(x)(y'-x)\big]\vert_{y'=\xi_{x,y}}
\end{equation}
for some $\xi_{x,y}\in (x,y).$ The condition on $VE_0$ implies that
\begin{equation}
   \lim_{y'\to 0} \frac{\partial}{\partial y'} \big[\Omega(x,y')-f(x)(y'-x)\big]=0
\end{equation}
uniformly, and the result follows.
\end{proof}
We've stated this result for differential forms but it can also be stated for densities.
\begin{theorem}(Part 1)\label{main2} If in addition $\partial M=\emptyset,$ the symmetric part of $\Omega$ is zero and $\Omega$ is closed under $\delta^*,$ then the generalized Riemann sum approximation is exact for any triangulation, ie.
\begin{equation}
\sum_{\Delta\in\Delta_M}\Omega(\Delta)=\int_M \omega\;.
\end{equation}More generally, if $\partial M\ne\emptyset$ then we have a map (see \ref{rela})
\begin{equation}
    H^n\textup{Pair}\,(M,\partial M)_{\textup{loc}}\to \mathbb{R}
    \end{equation}
given by the \textup{relative} Riemann sum
\begin{equation}\label{rel}
(\Omega_M,\Omega_{\partial M})\mapsto \sum_{\Delta\in\Delta_M}\Omega_M(\Delta)\;-\sum_{\Delta\in\Delta_{\partial M}}\Omega_{\partial M}(\Delta)\;,
\end{equation}
and the right side is equal to 
\begin{equation}
\int_M VE_0(\Omega_M)-\int_{\partial M}VE_0(\Omega_{\partial M})\;.
\end{equation}
\end{theorem}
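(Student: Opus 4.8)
\emph{Proof plan.} The whole statement will be deduced from Part $0$ (\cref{int}), the Triangulated Stokes' Theorem (\cref{stok}), and one elementary observation: a completely antisymmetric $n$-cochain $\Omega$ on $\textup{Pair}\,\textup{M}_{\textup{loc}}$ defines, by evaluation on vertices, a singular $n$-cochain $\widehat{\Omega}$ on sufficiently small singular simplices of $M$ (send $s\colon\Delta^n\to M$ to $\Omega(s(e_0),\dots,s(e_n))$), and this assignment is natural with respect to the simplicial/singular coboundary, i.e. $\widehat{\delta^*\Omega}=\delta^{\mathrm{sing}}\widehat{\Omega}$ and $\widehat{\iota^*\Omega}=\iota^{\mathrm{sing},*}\widehat{\Omega}$ for the boundary inclusion. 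In particular, when $\delta^*\Omega=0$ the cochain $\widehat{\Omega}$ is a singular cocycle, and for any smooth triangulation $\Delta_M$, with its orientation-compatible ordering of vertices, one has $\sum_{\Delta\in\Delta_M}\Omega(\Delta)=\langle\widehat{\Omega},\mathrm{fund}\,\Delta_M\rangle$, the sign $\pm1$ attached to each top simplex in the fundamental cycle being exactly absorbed, by antisymmetry of $\Omega$, into the orientation-compatible vertex ordering used to define the generalized Riemann sum. A minor technical point: $\widehat{\Omega}$ is only defined on simplices small enough to land in the domain $U$ of the local groupoid, so one works with the subcomplex of $\mathcal{U}$-small singular chains, which computes the same homology by the small-simplices theorem; correspondingly one restricts to triangulations whose simplices are $\mathcal{U}$-small, a cofinal family in $\mathcal{T}_M$.

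\emph{The case $\partial M=\emptyset$.} By hypothesis the symmetric part of $\Omega$ vanishes, so $\Omega\in\Lambda^n\textup{Pair}\,\textup{M}^*_{\textup{loc}}$ is a completely antisymmetric cocycle; then $\widehat{\Omega}$ is a singular cocycle and, for \emph{every} triangulation $\Delta_M$, $\mathrm{fund}\,\Delta_M$ is a singular cycle representing the fundamental class $[M]\in H_n(M;\mathbb{Z})$. Hence $\sum_{\Delta\in\Delta_M}\Omega(\Delta)=\langle[\widehat{\Omega}],[M]\rangle=[M]\frown[\widehat{\Omega}]$ depends only on $\Omega$ and $M$, not on the triangulation at all — matching the remark in the introduction. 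On the other hand, \cref{int} says this quantity converges to $\int_M\omega$ along the net of triangulations; a constant net equals its limit, so $\sum_{\Delta\in\Delta_M}\Omega(\Delta)=\int_M\omega$ for every $\Delta_M$, which is the first assertion.

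\emph{The relative case.} Take a cocycle $(\Omega_M,\Omega_{\partial M})$ of the relative complex computing $H^\bullet\textup{Pair}(M,\partial M)_{\textup{loc}}$, so $\delta^*\Omega_M=0$ and $\iota^*_{\partial M}\Omega_M=\delta^*\Omega_{\partial M}$ (up to the usual signs). Passing to singular cochains, $(\widehat{\Omega_M},\widehat{\Omega_{\partial M}})$ is a relative singular cocycle in $C^n(M,\partial M;\mathbb{R})=C^n(M)\oplus C^{n-1}(\partial M)$; and for a triangulation $\Delta_M$ with induced boundary triangulation $\Delta_{\partial M}$, the relative fundamental cycle $\mathrm{fund}\,\Delta_M$ (with $\partial\,\mathrm{fund}\,\Delta_M=\mathrm{fund}\,\Delta_{\partial M}$) represents $[M,\partial M]\in H_n(M,\partial M;\mathbb{Z})$. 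Pairing gives
\begin{equation*}
\sum_{\Delta\in\Delta_M}\Omega_M(\Delta)-\sum_{\Delta\in\Delta_{\partial M}}\Omega_{\partial M}(\Delta)=\big\langle[(\widehat{\Omega_M},\widehat{\Omega_{\partial M}})],\,[M,\partial M]\big\rangle\,,
\end{equation*}
which is therefore independent of the triangulation; and since $(\Omega_M,\Omega_{\partial M})\mapsto(\widehat{\Omega_M},\widehat{\Omega_{\partial M}})$ carries relative coboundaries to relative coboundaries (by the naturality above), the left side depends only on the class in $H^n\textup{Pair}(M,\partial M)_{\textup{loc}}$. This produces the claimed map to $\mathbb{R}$. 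To evaluate it, apply \cref{int} to $\Omega_M$ on the compact manifold-with-boundary $M$ and to $\Omega_{\partial M}$ on the closed manifold $\partial M$: the two individual sums converge along refinements to $\int_M VE_0(\Omega_M)$ and $\int_{\partial M}VE_0(\Omega_{\partial M})$ respectively, so their difference — which we just showed is a constant net — equals $\int_M VE_0(\Omega_M)-\int_{\partial M}VE_0(\Omega_{\partial M})$. (As a sanity check, one can also verify directly via \cref{stok} on $M$ and on the closed $\partial M$ that the relative Riemann sum annihilates relative coboundaries, which re-proves well-definedness.)

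\emph{Main obstacle.} The real inputs are \cref{int} (established in \cite{Lackman3}, by an analytic Taylor-remainder estimate unrelated to the present cohomological argument, hence no circularity) and \cref{stok}. Granting those, the only things that genuinely need care are (i) the compatibility of the local-groupoid domain with the passage to singular chains — handled by the small-simplices theorem together with restricting to sufficiently fine triangulations — and (ii) the orientation bookkeeping that matches the vertex orderings used to form the generalized Riemann sums with the orientation of $M$ and the induced boundary orientation of $\partial M$; this is routine but must be spelled out for the signs in the relative pairing to come out correctly. I expect (i) to be the point most in need of a careful statement.
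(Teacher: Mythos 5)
Your argument is correct, but it reaches triangulation-independence by a different route than the paper does. The paper's proof is a cobordism argument: it pulls the closed pair $(\Omega_M,\Omega_{\partial M})$ back to $\textup{Pair}\,\textup{M}_{\textup{loc}}\times\textup{Pair}\,[0,1]$, places the two given triangulations of $M$ at the two ends of the cylinder $M\times[0,1]$, extends them to a triangulation of the cylinder, and applies the triangulated Stokes' theorem (\cref{stok}) to conclude that the two relative Riemann sums agree; it then invokes \cref{int} exactly as you do to identify the common value. You instead transport $\Omega$ to a ($\mathcal{U}$-small) singular cocycle and quote the standard facts that every triangulation's fundamental cycle represents $[M]$ (resp.\ $[M,\partial M]$) and that evaluation of a cocycle on homologous cycles is constant. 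The two are close cousins — the textbook proof that two triangulations give homologous fundamental cycles is itself a prism argument — but the paper's version is self-contained (it uses only \cref{stok}, proved two lines earlier, and needs no singular machinery or small-simplices theorem), whereas yours makes the connection to the cap product with the fundamental class explicit, which the paper only states as a remark after the theorem. Your version buys the stronger conclusion that the relative Riemann sum descends to relative singular cohomology essentially for free; the paper's version avoids having to set up the comparison map $\Omega\mapsto\widehat{\Omega}$ and the attendant smallness and orientation bookkeeping, which, as you note, is where the care is needed. One small point to watch if you do spell out $\widehat{\delta^*\Omega}=\delta^{\mathrm{sing}}\widehat{\Omega}$: the paper's Definition \ref{diffg} carries a normalization factor $\tfrac{1}{(n+1)!}$ in front of the alternating sum (consistent with its wedge-product conventions), so the identification holds only up to that constant; this does not affect the cocycle or coboundary conditions, hence not your argument, but the equation as you wrote it is literally true only for the unnormalized differential.
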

\begin{remark}\label{rela}
Here, $H^n\textup{Pair}\,(M,\partial M)_{\textup{loc}}\to \mathbb{R}$ is the relative groupoid cohomology, defined analogously to relative de Rham cohomology: a $k$-cochain is a pair
\begin{equation}
    (\Omega_M,\Omega_{\partial M})\in \Lambda^k \textup{Pair}\,\textup{M}^*_{ loc}\oplus\Lambda^{k-1}\textup{Pair}\,\partial M^*_{ loc}\;,
    \end{equation}
and the differential is given by 
\begin{equation}
    (\Omega_M,\Omega_{\partial M})\mapsto (\delta^*\Omega_M,i^*\Omega_M-\delta^*\Omega_{\partial M})\;,
    \end{equation}
    where $i:\textup{Pair}^{(k)}\partial M_{ loc}\to \textup{Pair}^{(k)}M_{ loc}$ is the map induced by the inclusion $\partial M\xhookrightarrow{} M.$
\end{remark}
\begin{proof}
This follows by picking any two triangulations of $M,$ pulling back $(\Omega_M,\Omega_{\partial M})$ to 
\begin{equation}
    \textup{Pair}\,\textup{M}_{\textup{loc}}\times\textup{Pair}\,[0,1]
    \end{equation}
via the projection onto $\textup{Pair}\,\textup{M}_{\textup{loc}}$ (the pullback of $(\Omega_M,\Omega_{\partial M})$ will still be closed), extending the two triangulations of $M$ to a triangulation of 
\begin{equation}
    M\times [0,1],
    \end{equation}
applying \cref{stok} to deduce that \ref{rel} is independent of the triangulation and finally applying \cref{int}. 
\end{proof}
Part 2 is most generally stated for a Lie groupoid $G\rightrightarrows X.$ It is the van Est isomorphism theorem for $G$-modules and is stated and proved in \cite{Lackman}. A stronger statement can be found in \cite{Lackman2}. We will state a simple version of it here:
\begin{theorem}(Part 2)\label{part2}
Let $A$ be an abelian Lie group. Then $VE_0$ induces an isomorphism 
\begin{equation}
    H^*(G_{\textup{loc}},A)\cong H^*(\mathfrak{g},A)\;.
\end{equation}
\end{theorem}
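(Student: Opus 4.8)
**Proof proposal for Theorem (Part 2): $VE_0$ induces an isomorphism $H^*(G_{\textup{loc}},A) \cong H^*(\mathfrak{g},A)$.**

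The plan is to reduce the statement to the classical local van Est theorem by exploiting the local structure. The key point is that for a \emph{local} Lie groupoid, the source fibers of $G_{\textup{loc}}$ can be taken to be contractible (this is part of the standing assumption on local groupoids in \cref{locg}), so we are in the best possible case of the van Est isomorphism theorem stated earlier (the $n$-connected case, for all $n$). First I would set up the cochain complexes precisely: on the groupoid side, $C^\bullet(G_{\textup{loc}},A)$ is the complex of smooth normalized $A$-valued cochains on the nerve $G_{\textup{loc}}^{(\bullet)}$ with the differential $\delta^*$ from \cref{diffg}; on the algebroid side, $C^\bullet(\mathfrak{g},A)$ is $\Gamma(\Lambda^\bullet \mathfrak{g}^*)\otimes A$ (or rather $A$-valued multilinear alternating forms) with the Lie algebroid differential $d$ from \cref{diffa}. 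Since $A$ is an abelian Lie group, writing $A \cong \mathbb{R}^k \times (\text{discrete})$ via its Lie algebra and component group, the $\mathbb{R}^k$ part is handled by the $\mathbb{R}$-coefficient theory (Crainic's theorem, quoted in \cref{ve1}), and the discrete part must be handled separately — this is where the genuinely new content lies.

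Second, I would observe that $VE_0$ is a chain map $C^\bullet_0(G_{\textup{loc}}) \to C^\bullet(\mathfrak{g})$ by the very definition of $d$ in \cref{diffa} (namely $VE_0 \circ \delta^* = d \circ VE_0$ on completely antisymmetric cochains), and that by the quasi-isomorphism $\Lambda^\bullet G^* \hookrightarrow C^\bullet(G)$ stated in \cref{diff}, it suffices to work with completely antisymmetric cochains on the groupoid side. For the $\mathbb{R}$-coefficient case with $n$-connected (here: contractible) source fibers, Crainic's isomorphism theorem gives the result directly in all degrees. The remaining work is to upgrade from $\mathbb{R}$ to a general abelian Lie group $A$: I would use the exponential sequence $0 \to \Lambda \to \mathfrak{a} \xrightarrow{\exp} A \to \pi_0(A) \to 0$ (where $\mathfrak{a} = \operatorname{Lie}(A)$ and $\Lambda$ is the integral lattice), functoriality of both cohomology theories in the coefficient group, and a five-lemma / long-exact-sequence chase. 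The key sub-claim is that for \emph{discrete} coefficients $D$, the map $VE_0$ induces $H^*(G_{\textup{loc}}, D) \cong H^*(\mathfrak{g}, D)$ — but with contractible source fibers, $H^*(\mathfrak{g}, D)$ in positive degree should vanish (a discrete-coefficient Lie algebroid complex is concentrated in degree $0$, since multilinear alternating forms valued in a discrete group are forced to be locally constant hence zero in positive degree by connectedness of fibers and continuity), and likewise $H^*(G_{\textup{loc}}, D)$ should be computed purely by the contractibility of source fibers, giving $D$ in degree $0$ and $0$ above.

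The main obstacle I expect is precisely the discrete-coefficient case and, relatedly, making sure the local (germ) nature of $G_{\textup{loc}}$ doesn't introduce pathologies: one must verify that smoothness plus normalization genuinely forces $A$-valued cocycles on the nerve to be cohomologically trivial in positive degrees when $A$ is discrete and the source fibers are contractible — this is a statement about the homotopy type of the nerve of a local groupoid versus that of an honest groupoid, and it is cleanest to prove by exhibiting an explicit contracting homotopy on normalized cochains built from the contraction of the source fibers (an ``extra degeneracy'' / simplicial homotopy argument), which works identically for any coefficient group. I would therefore actually restructure the proof to prove the homotopy-theoretic statement once and for all with arbitrary abelian coefficients $A$ on both sides, deducing the isomorphism by comparing the two explicitly-contractible complexes, rather than splitting into $\mathbb{R}$ and discrete cases; the reference to \cite{Lackman} suggests this is the intended route. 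The one genuinely delicate point is interchanging the contracting homotopy with the smoothness requirement on cochains — one must check the homotopy operator preserves smoothness, which it does because it is given by integration/evaluation along the smooth contraction of the source fibers.
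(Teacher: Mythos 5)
Your reduction for $A=\mathbb{R}$ (the chain-map property $VE_0\,\delta^*=d\,VE_0$, contractible source fibers, Crainic's theorem) is fine and consistent with the paper. The genuine gap is in your treatment of general $A$, specifically of discrete coefficients $D$. You claim that with contractible source fibers both $H^{>0}(G_{\textup{loc}},D)$ and $H^{>0}(\mathfrak{g},D)$ vanish, and more broadly that the two complexes are ``explicitly contractible.'' Under the definitions the theorem actually uses, this is false: the paper takes $H^*(G_{\textup{loc}},A)$ to be the \emph{sheaf} cohomology of $A$-valued cochains on the nerve, and $H^*(\mathfrak{g},A)$ to be the hypercohomology of the complex of sheaves $\mathcal{O}(A)\to\mathcal{O}(\mathfrak{g}^*\otimes\mathfrak{a})\to\cdots$. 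For discrete $D$ both sides compute $H^*(X,D)$, not $D$ concentrated in degree $0$; for $A=S^1$ and $\mathfrak{g}=TX$ the degree-$n$ group classifies gerbes with flat connection, which is exactly what the Chern--Simons application requires and which your computation would kill. (If you instead use the naive global complex of smooth normalized cochains, your vanishing claim does become true for discrete $D$ --- a smooth normalized $D$-valued cochain is locally constant and vanishes on the identity bisection, hence vanishes near it --- but then the theorem degenerates for non-soft coefficients and no longer supports the paper's applications; naive and sheaf-theoretic complexes agree only for coefficients like $\mathbb{R}$.) The same confusion undermines your contracting-homotopy plan: contractibility of the source fibers resolves only the simplicial direction of a double complex whose other direction is a resolution over the base $X$; it does not trivialize either edge complex --- indeed $H^*(\textup{Pair}\,M_{\textup{loc}},\mathbb{R})\cong H^*_{dR}(M)$, which you yourself invoke, is generally nonzero. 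Your five-lemma skeleton via the exponential sequence is salvageable, but the discrete-coefficient input must be the identification of both sides with $H^*(X,D)$ (a \v{C}ech-type comparison over the base), not a vanishing statement.

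For comparison, the paper's own proof goes a different way: it fixes a tubular identification of $G_{\textup{loc}}$ with $\mathfrak{g}$ and, for a closed form $\omega$, writes down the explicit completely antisymmetric cocycle $\Omega(g_1,\ldots,g_n)=\int_{C_{(g_1,\ldots,g_n)}}\omega$ over convex hulls in the source fibers, which satisfies $VE_0(\Omega)=\omega$; the case of general $A$ is then handled by gluing such local primitives using \v{C}ech cocycles. That gluing step is precisely where the sheaf-theoretic content for nontrivial $A$ enters, and it is the ingredient your proposal is missing.
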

$\,$\\In the above, the left side is the sheaf cohomology of $A$-valued cochains on the nerve of $G_{\textup{loc}}\,.$ The right side is the Lie algebroid cohomology valued in the trivial $A$-bundle, which is obtained using the Lie algebroid version of the exterior covariant derivative. Explicitly, it is the cohomology of  
\begin{equation}
   \mathcal{O}(A)\xrightarrow[]{\textup{dlog}}\mathcal{O}(\mathfrak{g}^*\otimes\mathfrak{a})\xrightarrow[]{d} \mathcal{O}(\Lambda^2\mathfrak{g}^*\otimes\mathfrak{a})\to\cdots\;,
\end{equation}
where $\mathcal{O}(A)$ is the sheaf of $A$-valued functions on the base, and $\mathcal{O}(\Lambda^k \mathfrak{g}^*\otimes\mathfrak{a})$ is the sheaf of $k$-forms valued in the Lie algebra $\mathfrak{a}$ of $A.$ 
$\,$\\\\We will give a construction which is similar to the one found in \cite{cabr}.
\begin{proof}
We will sketch the proof for $A=\mathbb{R},$ the proof for general $A$ is done similarly using C\v{e}ch cocycles.  We generalize the construction of the antiderivative in the fundamental theorem of calculus. First, we choose an identification of 
\begin{equation}
    \begin{tikzcd}\label{ident}
G_{\text{loc}} \arrow[d] \arrow[r, shift right=7] & \mathfrak{g} \arrow[d] \\
X                                    & X                     
\end{tikzcd}
\end{equation}
which is the identity on $X,$ and for which the derivative restricts to the identity map on $\mathfrak{g}\subset TG.$\footnote{We're using the natural identification of $V$ with the tangent space at the origin for a vector space $V.$ Here, $V$ is a fiber of $\mathfrak{g}.$ This is called a tubular structure in \cite{cabr}.} 
\\\\Let $\omega$ be a closed $n$-form on $\mathfrak{g}.$ For $g_1,\ldots,g_n\in G_{\textup{loc}}$ with source $x\in X,$ let $C_{(g_1,\ldots,g_n)}$ be the convex hull of $x,g_1,\ldots g_n,$ defined using \ref{ident}.\footnote{That is, convex hulls make sense in a vector space.} This space is naturally oriented by the vectors $(g_1-x,\ldots,g_n-x),$ if they are linearly independent. The following is a completely antisymmetric $n$-cocycle which maps to $\omega$ under $VE_0:$
\begin{equation}
    \Omega(g_1,\ldots, g_n)=\int_{C_{(g_1,\ldots,g_n)}}\omega\;,
\end{equation}
where we identify $\omega$ with its left translation to $G_{\textup{loc}}\,.$ 
\end{proof}
\begin{comment}\begin{theorem}(Part 2)\label{part2}
Let $\omega$ be an $k$-form on a convex open set $U\subset \mathbb{R}^n.$ Given 
\begin{equation}
    (x_0,\ldots,x_k)\in \textup{Pair}^{(k)}\,U\,,
    \end{equation}
let $C_{(x_0,\ldots,x_k)}$ be the convex hull of the points $x_0,\ldots,x_k\in U,$ with orientation given by the vectors $(x_1-x_0,\ldots, x_k-x_0)$ if they are linearly independent. Define a $k$-cochain on $\textup{Pair}\,U$ by 
\begin{equation}
    \Omega(x_0,\ldots,x_k)=\int_{C_{(x_0,\ldots,x_k)}}\omega\;,
\end{equation}
(which we interpret as $0$ if $(x_1-x_0,\ldots, x_k-x_0)$ are linearly dependent). Then $\Omega$ is $\delta^*$-closed, completely antisymmetric and satisfies $VE_0(\Omega)=\omega.$ 
\end{theorem}
\begin{proof}
Closedness follows from \ref{int}, \ref{main2} (more precisely, Stokes' theorem). The property $VE_0(\Omega)=\omega$ can be checked directly, but follows from part 1.
\end{proof}
\end{comment}
Part 0 of this theorem is the motivation for the groupoid approach to functional integration we describe (or really, the groupoid approach to functional integration suggested part 0). Parts 1 and 2 are motivated by topological field theories, eg. the Poisson sigma model and Chern-Simons for finite groups. In particular, part 1 tells us that for a topological sigma model, we only need to take a limit over triangulations of the  boundary of the domain to compute the action of a morphism — if the boundary is empty we don't need to take any limit to compute it.
\begin{remark}
Of course, Part 1 is related to Lefschetz duality: $(\Omega_M,\Omega_{\partial M})$ pulls back to a relative simplicial cocycle and the relative generalized Riemann sum is the cap product with the fundamental class of $M.$ If we assume the cap product is used, we don't need to assume the symmetric part is zero. Furthermore, we can make the stronger statement that the perfect pairings of $H^*\textup{Pair}\,(M,\partial M)_{\textup{loc}}\,,$ induced by singular cohomology and de Rham cohomology, are equal.
\end{remark}
$\,$\\The following is the main result that justifies our approach to functional integration (and holds in a bit more generality). First, note that by Lie's second theorem a Lie algebroid morphism $\mathbf{x}:TM\to\mathfrak{g}$ integrates uniquely to a morphism $\mathbf{X}:\textup{Pair}\,\textup{M}_{\textup{loc}}\to G_{\textup{loc}}.$
\begin{corollary}\label{funco}
Let $M$ be $n$-dimensional and let $\omega$ be an $n$-form on $\mathfrak{g}.$ Let $\Omega\in \mathcal{A}_0^n G^*$ satisfy $VE_0(\Omega)=\omega.$ Then for $\mathbf{x}:TM\to\mathfrak{g},$
\begin{equation}
    \lim_{\Delta_M\in\mathcal{T}_M}\sum_{\Delta\in\Delta_M}\mathbf{X}^*\Omega(\Delta)=\int_M \mathbf{x}^*\omega\;,
\end{equation}
where $\mathbf{X}:\textup{Pair}\,\textup{M}_{\textup{loc}}\to G_{\textup{loc}}$ integrates $\mathbf{x}.$ If $\Omega$ is a cocycle and $\partial M=\emptyset,$ then the generalized Riemann sum of $\mathbf{x}^*\Omega$ is exact.
\end{corollary}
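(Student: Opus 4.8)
The plan is to reduce the statement to \cref{int} (Part 0) applied to the cochain $\mathbf{X}^*\Omega$ on $\textup{Pair}\,\textup{M}_{\textup{loc}}$. By Lie's second theorem the algebroid morphism $\mathbf{x}$ integrates to a morphism of local groupoids $\mathbf{X}:\textup{Pair}\,\textup{M}_{\textup{loc}}\to G_{\textup{loc}}$, which induces simplicial maps $\mathbf{X}^{(n)}:\textup{Pair}^{(n)}M_{\textup{loc}}\to G^{(n)}$ (only the germ of $\Omega$ near the identity bisection matters, so restricting $\Omega$ to $G_{\textup{loc}}$ loses nothing). First I would check that $\mathbf{X}^*\Omega\in\mathcal{A}_0^n\textup{Pair}\,\textup{M}^*_{\textup{loc}}$: it is smooth because $\mathbf{X}$ is, it is normalized because $\mathbf{X}$ sends identities to identities, and it is $A_{n+1}$-invariant because the $S_{n+1}$-action on the nerve is assembled from inversion and multiplication, both of which a groupoid morphism respects, so $\mathbf{X}^{(n)}$ is $S_{n+1}$-equivariant.

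The crucial step is the naturality of the van Est map: $VE_0(\mathbf{X}^*\Omega)=\mathbf{x}^*(VE_0(\Omega))=\mathbf{x}^*\omega$. This follows from the defining formula $VE_0(\Omega)(\xi_1,\ldots,\xi_n)=\xi_n\cdots\xi_1\,\Omega_x$ together with the chain rule: restricting $\mathbf{X}^{(n)}$ to a product of source fibers over a point and differentiating at the identity bisection, the only derivative of $\mathbf{X}$ that enters is the restriction of $d\mathbf{X}$ to the source fibers at the identity, which is exactly $\mathbf{x}$ — this is precisely what it means for $\mathbf{X}$ to integrate $\mathbf{x}$. Because the $n$ differentiations act on independent factors of $G^{(n)}$, this reduces to an iterated directional derivative and the chain rule goes through without the order-of-differentiation subtleties one would face for a genuine higher jet. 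Granting naturality, \cref{int} applied to $\mathbf{X}^*\Omega$ — with the form there taken to be $\mathbf{x}^*\omega$ — yields $\lim_{\Delta_M\in\mathcal{T}_M}\sum_{\Delta\in\Delta_M}\mathbf{X}^*\Omega(\Delta)=\int_M\mathbf{x}^*\omega$.

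For the exactness addendum, assume $\delta^*\Omega=0$ and $\partial M=\emptyset$. Since $\mathbf{X}$ intertwines the groupoid differentials on source and target, $\delta^*(\mathbf{X}^*\Omega)=\mathbf{X}^*(\delta^*\Omega)=0$, so $\mathbf{X}^*\Omega$ is an $A_{n+1}$-invariant cocycle. Pulling it back along the projection of a simplex onto its ordered vertex set turns it into a simplicial cocycle representing a class in $H^n(M;\mathbb{R})$, and by \cref{main2} (Part 1) — in the cap-product formulation of the remark following it, so that a possible symmetric part is harmless — the generalized Riemann sum $\sum_{\Delta}\mathbf{X}^*\Omega(\Delta)$ equals the cap product of that class with $[M]$; hence it is independent of the triangulation and equals $\int_M VE_0(\mathbf{X}^*\Omega)=\int_M\mathbf{x}^*\omega$.

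I expect the naturality of $VE_0$ under the integrated morphism to be the only real obstacle; the remaining ingredients — equivariance, normalization, compatibility with $\delta^*$, and the passage to the limit — are either bookkeeping or a direct appeal to Parts 0 and 1. If naturality is not to be invoked as a black box, the honest work is the chain-rule identification of the source-fiber derivative of $\mathbf{X}$ at the identity bisection with $\mathbf{x}$, which is immediate from the construction underlying Lie's second theorem.
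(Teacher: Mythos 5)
Your proposal is correct and follows essentially the same route as the paper, whose entire proof is the one-line citation of \cref{int}, \cref{main2}, and the naturality of $VE_0$ and $\delta^*$ with respect to pullbacks by morphisms. You simply spell out the details the paper leaves implicit (the $S_{n+1}$-equivariance of the induced map on nerves, the chain-rule verification that $VE_0(\mathbf{X}^*\Omega)=\mathbf{x}^*\omega$, and the handling of a possible symmetric part via the cap-product remark), all of which are consistent with the paper's intent.
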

\begin{proof}
    This follows from \cref{int}, \cref{main2} and fact that $VE_0,\,\delta^*$ are natural with respect to pullbacks by morphisms.
\end{proof}
\begin{remark}
With regards to \ref{main2}, \ref{funco}, if $\Omega$ is a cocycle and $\partial M\ne\emptyset,$ then the generalized Riemann of $\Omega$ won't depend on the triangulation of the interior, just on the triangulation of the boundary.
\end{remark}
\begin{exmp}
Let $\Omega\in \mathcal{A}_{0}^1\textup{Pair}\,[0,1]^*$ and let $f:[0,1]\to\mathbb{R}$ be smooth and let $0=x_0<x_1<\cdots<x_n=1$ be a triangulation of $[0,1].$
Then
\begin{equation}
    \sum_{\Delta\in \Delta_{[0,1]}}\Omega(\Delta)=\sum_{i=0}^{n-1}\Omega(x_i,x_{i+1})\;.
\end{equation}
If we let 
\begin{equation}
    \Omega(x,y)=f(x)(y-x),\;f(y)(y-x)
    \end{equation}
we get the left and right Riemann sums, respectively. If we let $F'=f$ and $\Omega(x,y)=F(y)-F(x)$ we get $F(1)-F(0)$ (which is exact because this $\Omega$ is a cocycle). We could also consider 
\begin{equation}
    \Omega(x,y)=f(xe^{y-x})\sin{(y-x)}+f(xy)(y-x)^5\;.
    \end{equation}
These all map to $f\,dx$ under $VE_0,$ so according to \cref{int} the limits of the generalized Riemann sums all equal $\int_0^1 f\,dx\,.$
\end{exmp}
\begin{exmp}\label{funex}
Parts 1 and 2 generalize the fundamental theorem of calculus. We get 
\begin{equation}
    f(b)-f(a)=\int_a^b df
    \end{equation}
by applying part 1 to the relative $1$-cocycle $(\Omega_{[a,b]},\Omega_{\{a,b\}})$ with 
\begin{equation}
    \Omega_{[a,b]}(x,y)=f(y)-f(x)\,,\;\;\Omega_{\{a,b\}}=0.
    \end{equation}
For part 2, the map
\begin{equation}
  (a,x)\mapsto\int_a^x df
\end{equation}
is the cocycle $\Omega$ we defined in the proof of part 2. Therefore, $VE_0(\Omega)=df\,,$ which says that the exterior derivative of $x\mapsto \int_a^x df$ is $df$ for each $a.$
    \end{exmp}
    \begin{exmp}\label{highf}
This is a higher dimensional example of the previous example. Let $M$ be $n$-dimensional and let $(\Omega,0)$ be an  $n$-cocycle on $\textup{Pair}\,(M,\partial M)$ (which implies $\Omega$ must vanish on the boundary). Then 
\begin{equation}
    \sum_{\Delta\in\Delta_M}\Omega(\Delta)=\int_M VE_0(\Omega)\;.
\end{equation}
    \end{exmp}
\begin{exmp}\label{stokex}
$\,$\\\begin{itemize}
    \item We can get Stokes' theorem and the Poincar\'{e} lemma: Stokes' theorem 
\begin{equation}
    \int_{\partial M}\omega=\int_M d\omega
\end{equation}
is a special case of part 1, which is obtained by applying it to the relative cocycle
\begin{equation}
    (\Omega_M,\Omega_{\partial M})=\delta^*(\Omega,0)
\end{equation}
where $VE_0(\Omega)=\omega\,;$ we just need to know that $VE_0$ is surjective on completely antisymmetric cochains, which is easy to show using coordinates and a partition of unity.
\item For the Poincar\'{e} lemma on $\mathbb{R}^m,$ we get a primitive for $\omega$ by trivializing the cocycle $\Omega$ in the proof of part 2. Explicitly, we define  
\begin{equation}
 \Omega_{0}\in \Lambda^{n-1}\textup{Pair}\,\mathbb{R}^{m*}\,,\;\;\;\Omega_{0}(x_1,\ldots,x_n)=\int_{C(0,x_1,\ldots,x_n)}\omega\;,
 \end{equation}
where $C(0,x_1,\ldots,x_n)$ is the convex hull of $0,x_1,\ldots, x_n\in\mathbb{R}^m.$ We then have that \begin{equation}
    d\,VE_0(\Omega_{0})=\omega\;,
    \end{equation}
this follows from the fact that $d\,VE_0=VE_0\,\delta^*.$ 
\end{itemize}
$\,$\\Note that, a short computation shows that for any $M$ and any $m\in M,$ the map 
\begin{equation}
    \mathcal{A}^{n}\textup{Pair}\,\textup{M}^*\to \mathcal{A}^{n-1}\textup{Pair}\,\textup{M}^*\;,\;\;\;\Omega_m(m_1,\ldots,m_n)=\Omega(m,m_1,\ldots,m_n)
\end{equation}
trivializes any cocycle $\Omega\,.$ An abstract reason for the existence of a primitive is due to the fact that the pair groupoid is Morita equivalent to a point. 
\end{exmp}
\begin{comment}
    $\,$\\\begin{remark}\label{nonrie}
In the case that one is interested in differentiable forms which aren't necessarily continuous, hence possibly not locally Lebesgue integrable, our definition of the integral is a bit more general than the traditional Riemann one — it has the advantage that Stokes' theorem always holds, which simplifies the proof of Cauchy's integral formula (ie. one must deal with the fact that a priori $f$ being holomorphic doesn't imply that $f'$ is continuous). 
\\\\Suppose there exists a cochain $\Omega$ such that $VE_0(\Omega)=\omega$ and such that 
\begin{equation}\label{sumc}
   \lim\sum_{\Delta}\Omega(\Delta)
\end{equation}
exists, then we define this to be $\int_M\omega\,.$ This is well-defined (with a similar proof to Goursat's theorem for triangles). For example, let $f$ be a function on $[a,b]$ such that $df$ isn't Lebesgue integrable. It's still true that $VE_0(\delta^*f)=df\,,$ and of course \ref{sumc} converges to $f(b)-f(a)$ with $\Omega=\delta^*f\,.$ 
\end{remark}
\end{comment}
\subsubsection{Gauss-Bonnet Example}
We discussed the Euler characteristic in example \ref{eulerc}. We will now discuss the Gauss-Bonnet theorem.
\begin{exmp}
Let $(M,g)$ be an oriented Riemannanian surface (with boundary). We get a degree $2$-cocycle $(\Omega_M,\Omega_{\partial M})$ as follows:
let $\Omega_M\in\Lambda^2\textup{Pair}\,\textup{M}_{\textup{loc}}^*$ be given by
\begin{equation}
    \Omega(x_0,x_1,x_2)=\pm(\textup{sum of internal angles of the corresponding geodesic triangle}-\pi)\;,\footnote{This is how the curvature is defined on a simplicial complex in Regge \cite{regge}.}
\end{equation}
where the sign is chosen according to whether the vectors determined by $(x_0,x_1),\,(x_0,x_2)$ are oriented or not.
\\\\As for $\Omega_{\partial M}\in\Lambda^1\textup{Pair}\,\partial M_{\textup{loc}}^*,$ it is given by
\begin{equation}
    \Omega_{\partial M}(x_0,x_1)=\mp\textup{(sum of internal angles of the corresponding semicircle})\;,
\end{equation}
where the semicircle is determined by the arc on the boundary and the geodesic connecting $x_0, x_1,$ and the sign is chosen according to whether the vector on $\partial M$ determined by $(x_0,x_1)$ is oriented or not (it's minus if oriented)
\\\\The standard counting arguments (eg. at each interior vertex the adjacent angles add up o $2\pi$) show that the relative Riemann sum is equal to \begin{equation}
    2\pi(\textup{number of vertices}-\textup{number of edges}+\textup{number of faces)}\,
\end{equation}and \cref{main2} shows that this is exactly equal to 
\begin{equation}
    \int_M VE_0(\Omega_M)-\int_{\partial M}VE_0(\Omega_{\partial M})\;.
    \end{equation}
   Of course, this is the Gauss-Bonnet theorem.
\end{exmp}
\begin{remark}
We obtained the fundamental theorem of calculus as a special case of \ref{highf}, not of \ref{funex}.
This was intentional because the fundamental theorem of calculus is only a special case of  Stokes' theorem by convention; one must treat the $1$-dimensional case separately. We are viewing the fundamental theorem of calculus as being about primitives with respect to $VE_0,$ not with respect to $d.$
\end{remark}
\begin{comment}
    if one is willing to treat the one dimensional case separately and use ad hoc arguments. We can resolve this issue if we admit that a point is nonorientable, as the natural definitions of orientation in differential geometry and the definition using reduced homology suggest.
\end{comment}
\begin{comment}\\\\If one wants to derive the fundamental theorem of calculus from Stokes' theorem then one has to treat the one dimensional case separately and (for example) arbitrarily decide to orient the zero dimensional vector space, decide that $a$ should be negatively oriented, and think of zero forms as maps $f:[a,b]\times\mathbb{Z}/2\to\mathbb{R},$ so that one can say that $f(-a)=-f(a)$ in 
\begin{equation}
    f(b)-f(a)=\int_a^b df\;.
\end{equation}
Otherwise, it's natural to assume that the left side should be $f(b)+f(a).$ The issue is that $0$-forms, regarded as functions on the identity arrows, aren't normalized or completely antisymmetric, they are completely symmetric. We can resolve this issue if we admit that a point is nonorientable, as the natural definition of orientation in differential geometry and the definition using reduced homology suggest.
\end{comment}
\section{What Can We Integrate? What Are the Observables?}
In great generality, the things that we integrate over an $n$-dimensional manifold $X$ can be described by germs of completely symmetric $n$-cochains defined on a neighborhood of $X$ (more precisely, a jet of such a cochain). In fact, we can construct such a cochain from any Borel measure: choose a triangulation $|\Delta_X|$ of $X,$ and consider the space $U\subset \textup{Pair}^{(n)}\,X$ consisting of $n$-tuples of points in $X$ which lie in the same simplex in $|\Delta_X|.$ On $U$ we can define a completely symmetric $n$-cochain by assigning to an $n$-tuple the measure of the interior\footnote{We mean the interior of the convex hull as a manifold with boundary.} of the convex hull containing those points. We can extend this cochain by zero to a small open set containing $U.$\
\begin{exmp}(see \cite{horst} for a more general criteria)
Let $\mu$ be a finite Borel measure on $[0,1]$ and let $\mu$ be the $1$-cochain constructed in the preceding paragraph. Then for any continuous function $f,$
\begin{equation}
    \int_{[0,1]} f\,d\mu=\lim_{\Delta_{[0,1]}\in \mathcal{T}_{[0,1]}}\sum_{\Delta\in\Delta_{[0,1]}}(s^*f\mu)(\Delta)\;.
\end{equation}
\end{exmp}
The cochain we constructed won't be continuous, but if the measure is given by a density we can integrate it to a continuous, completely symmetric cochain on the pair groupoid. We frequently obtain measures by using top forms instead of densities, but in order to do so we need a choice of orientation, and such a choice determines a density.
\\\\On the other hand, instead of just integrating over manifolds we frequently want to integrate over maps $f:Y\to X,$ eg. 
\begin{equation}
    \int_Y f^*\omega\,,
    \end{equation}
as we do when computing functional integrals. More generally, we can pull back Lie algebroid forms by morphisms $f:TY\to\mathfrak{g}$ and integrate them too. At the level of cochains on the groupoid, the objects we pull back and integrate are
\begin{equation}
 \mathcal{A}^nG^*\cong S^nG^*\oplus \Lambda^n G^*\;.
\end{equation}
Again, we only need to consider the germs of such cochains near $X\subset G^{(n)},$ since only the limit of \ref{summ} is important.
\\\\Let us remark some advantages completely antisymmetric cochains have over completely symmetric ones: they come with a product, a differential and they satisfy Stokes' theorem exactly (\ref{stok}). Also, the cochains of interest in $\Lambda^{\bullet} G^*$ are more likely to be smooth than the cochains of interest in $S^{\bullet} G^*,$ eg. consider $\Omega(x,y)=y-x$ versus $\Omega(x,y)=|y-x|.$ We want to include the latter in the domain of $VE_0$ so that we can say it differentiates to $|dx|.$
\\\\In more generality, there are things we can integrate over maps $f:Y\to X$ that aren't pulled back from $\text{Pair}(X),$ eg. given a function $V:X\to\mathbb{R}$ and a density $\mu$ on $Y,$ we can integrate $(f^*V)\mu\,.$ Generally speaking then, given an $n$-dimensional manifold $Y,$ the objects we want to integrate over a \textit{continuous} Lie algebroid morphism\footnote{We think of continuous (but not differentiable) Lie algebroid morphisms as continuous morphisms between their integrating groupoids. Recall that the Wiener measure is supported on non-differentiable paths.} $f:TY\to \mathfrak{g}$ are given by germs near the identity bisections of
\begin{equation}\label{suma}
\big[S^n \textup{Pair}\,Y^*_{\textup{loc}}\otimes S^0 G^*_{\textup{loc}}\big]\oplus \big[S^0 \textup{Pair}\,Y^*_{\textup{loc}}\otimes S^n G^*_{\textup{loc}}\big]\oplus \bigoplus_{i+j=n}\Lambda^i\textup{Pair}\,Y^*_{\textup{loc}}\otimes\Lambda^jG^*_{\textup{loc}}\;.
\end{equation}
\begin{comment}out of this data together To understand this, given a morphism $f:\textup{Pair}\,Y^*\to G$ and two cochains
\begin{equation}
    \Omega_Y\in S^i\textup{Pair}\,Y^*\oplus \Lambda^i\textup{Pair}\,Y^*\,,\;\;\Omega_G\in S^jG^*\oplus \Lambda^jG^*\,,
\end{equation}
we get an $n$-cochain on $\textup{Pair}\,Y^*$ given by 
\begin{equation}\label{newmap}
    (\gamma_1,\ldots,\gamma_n)\mapsto \Omega_Y(\gamma_1,\ldots,\gamma_i)\,f^*\Omega_G(\gamma_{i+1},\ldots,\gamma_j)\,.
\end{equation}
This cochain won't be invariant under $A_{n+1},$ however:
\begin{definition}
We have an $n$-cochain 
\begin{equation}
\Omega_Y\cdot f^*\Omega_G\in S^n\textup{Pair}\,Y^*\oplus \Lambda^n\textup{Pair}\,Y^*
\end{equation} defined by summing the complete symmetrization and the complete antisymmetrization of \ref{newmap}.
\end{definition}
\end{comment}
That is, we use a map $f:\textup{Pair}\,Y_{\textup{loc}}\to G_{\textup{loc}}$ to pull back the cochain data on $G_{\textup{loc}}$ and then we take the product with the cochain data on $\textup{Pair}\,Y_{\textup{loc}},$ see definition \ref{wedge}. We then get approximations to observables by forming Riemann sums. We get approximations to more general observables by choosing $k$ such observables and inserting them into a function $F:\mathbb{R}^k\to\mathbb{R}.$ 
\\\\Even more generally, we can combine forms/densities and integrate over submanifolds. One way of approximating these observables is by choosing triangulations which restrict to triangulations of these closed submanifolds. This is what is done in path integrals when computing correlation functions, ie. if we want to compute the expectation value of $\mathbf{x}(t_1)\mathbf{x}(t_2)$ then we look at triangulations that have $t_1, t_2$ as vertices.\footnote{Which, according to the ordering by linear subdivision, is equivalent to looking at triangulations greater than some triangulation with $t_1,t_2$ as vertices.}
\\\\In the next section we use this framework to describe several stochastic integrals appearing in Brownian motion.
\subsection{Cochains Needed for Brownian Motion}\label{brown}
This discussion motivates the definition of the graded version of the van Est map. We assume standard facts about Brownian motion regarding stochastic integrals. See eg. \cite{peter} for details. 
\\\\
The significance of using germs of cochains to integrate is related to the fact that for a smooth function $f:\mathbb{R}\to\mathbb{R}$ and a partition $0=t_0\le t_1\le\ldots\le t_n=1$ of $[0,1],$ the random variables $C([0,1],\mathbb{R})\to\mathbb{R}$ given by
\begin{align}\label{it}
   &\mathbf{x}\mapsto \sum_{i=0}^{n-1}f(\mathbf{x}(t_i))(\mathbf{x}(t_{i+1})-\mathbf{x}(t_i))\;,
   \\&\mathbf{x}\mapsto\sum_{i=0}^{n-1}\frac{f(\mathbf{x}(t_i))+f(\mathbf{x}(t_{i+1}))}{2}(\mathbf{x}(t_{i+1})-\mathbf{x}(t_i))
\end{align}
converge to different random variables in $L^2(C([0,1],\mathbb{R}),\mu_W)$\footnote{$\mu_W$ is the Wiener measure.} as we take the limit over triangulations. On top we have the It\^{o} integral and on the bottom we have the Stratonovich integral. 
\\\\To explain this from the perspective of $VE{_0},$ consider the $1$-cochains on $\textup{Pair}\,\mathbb{R}$ given by 
\begin{equation}\label{itos}
\Omega_I(x,y)=f(x,y)(y-x)\;,\;\;\;\Omega_S(x,y)=\frac{f(x)+f(y)}{2}(y-x)\;.
\end{equation}
The It\^{o} and Stratonovich integrals sum over $\mathbf{x}^*\Omega_I,\,\mathbf{x}^*\Omega_S,$ respectively. If $\mathbf{x}$ has sufficient regularity, eg. is smooth, then pointwise the two sums converge to the same quantity, which would be the integral of $\mathbf{x}^*f\,dx\,.$ The problem is that while 
\begin{equation}
    VE{_0}(\Omega_I-\Omega_S)=0\,,
\end{equation}
we don't necessarily have  
\begin{equation}\label{hol}
    VE{_0}(\mathbf{x}^*\Omega_I-\mathbf{x}^*\Omega_S)=0\,.
\end{equation}
A simple computation shows that \ref{hol} holds if
\begin{equation}
   \lim\limits_{t_1\to t_0} \frac{(\mathbf{x}(t_1)-\mathbf{x}(t_0))^2}{t_1-t_0}=0\,,
\end{equation}
which is true if $\mathbf{x}$ is H\"{o}lder continuous with exponent $>1/2.$ However, Brownian paths with this degree of regularity have measure zero — Brownian paths have nearly as good a regularity as possible without being H\"{o}lder continuous with exponent $1/2,$ eg. they are H\"{older} continuous on a set of full measure for any exponent less than $1/2.$
\\\\For a normalized $1$-cochain $\Omega,$ $VE{_0}$ only remembers the first order Taylor expansion of $\Omega_x$ at $x,$ for each $x\in\mathbb{R}.$ This is enough if we are integrating over smooth paths, however for Brownian paths we need to remember the second order Taylor expansion of $\Omega_x,$ which is due to the following standard fact: 
\begin{proposition}
Let $\Omega(x,y)=(y-x)^2$ and for $\mathbf{x}\in C([0,1],\mathbb{R})$ consider the $1$-cochain on $\textup{Pair}[0,1]$ given by 
\begin{equation}
    \mathbf{x}^*\Omega(t,t')=(\mathbf{x}(t')-\mathbf{x}(t))^2\,.
    \end{equation}
For a partition $0=t_0\le t_1\le\ldots\le t_n=1$ of $[0,1],$ consider the random variable 
\begin{equation}
   \mathbf{x}\mapsto  \sum_{i=0}^{n-1} \mathbf{x}^*\Omega(t_i,t_{i+1})\,.
\end{equation}  
This converges to $1$ in $L^2(C([0,1],\mathbb{R}),\mu_W)$ as we take the limit over triangulations.
\end{proposition}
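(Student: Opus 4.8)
The plan is to reduce the claim to the classical computation of the mean and variance of the quadratic variation, using only the fact that under $\mu_W$ the increments $\mathbf{x}(t_{i+1})-\mathbf{x}(t_i)$ over a partition $0=t_0\le t_1\le\cdots\le t_n=1$ are independent centered Gaussians with $\mathbb{E}_{\mu_W}\big[(\mathbf{x}(t_{i+1})-\mathbf{x}(t_i))^2\big]=\Delta t_i$, where $\Delta t_i=t_{i+1}-t_i$. Write $Q=\sum_{i=0}^{n-1}\mathbf{x}^*\Omega(t_i,t_{i+1})=\sum_{i=0}^{n-1}(\mathbf{x}(t_{i+1})-\mathbf{x}(t_i))^2$; we must show $\|Q-1\|_{L^2(C([0,1],\mathbb{R}),\mu_W)}^2\to 0$ along the net of triangulations of $[0,1]$ ordered by linear subdivision (as in \ref{equiv}).

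First I would compute the mean: by linearity, $\mathbb{E}_{\mu_W}[Q]=\sum_{i=0}^{n-1}\Delta t_i=1$, so $\mathbb{E}_{\mu_W}[(Q-1)^2]=\mathrm{Var}_{\mu_W}(Q)$, and it suffices to bound the variance.

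The key step is the variance estimate. Since the summands $(\mathbf{x}(t_{i+1})-\mathbf{x}(t_i))^2$ are independent, $\mathrm{Var}_{\mu_W}(Q)=\sum_{i=0}^{n-1}\mathrm{Var}_{\mu_W}\big((\mathbf{x}(t_{i+1})-\mathbf{x}(t_i))^2\big)$. For a centered Gaussian $Z$ of variance $\sigma^2$ one has $\mathbb{E}[Z^4]=3\sigma^4$, hence $\mathrm{Var}(Z^2)=2\sigma^4$; applying this with $\sigma^2=\Delta t_i$ gives $\mathrm{Var}_{\mu_W}(Q)=2\sum_{i=0}^{n-1}(\Delta t_i)^2\le 2\,\Delta t\sum_{i=0}^{n-1}\Delta t_i=2\,\Delta t$, where $\Delta t=\sup_i\Delta t_i$ is the mesh of the triangulation. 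Passing to a common linear subdivision only decreases the mesh, and the meshes within any equivalence class of triangulations tend to $0$; therefore $\mathbb{E}_{\mu_W}[(Q-1)^2]\le 2\,\Delta t\to 0$ along the net, which is the assertion.

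There is essentially no serious obstacle here — the statement is a repackaging of the standard quadratic-variation identity — and the only point requiring a little care is treating the limit as a net rather than a sequence; the uniform bound $\mathbb{E}_{\mu_W}[(Q-1)^2]\le 2\,\Delta t$ in terms of the mesh makes that automatic. One could alternatively invoke the known almost-sure convergence of dyadic quadratic variations together with a uniform-integrability argument, but the direct second-moment computation is cleaner and yields the $L^2$ statement immediately.
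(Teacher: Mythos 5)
Your argument is correct and is exactly the standard second-moment computation: the mean of $Q$ is $1$ by the variance of the increments, and independence plus $\mathbb{E}[Z^4]=3\sigma^4$ gives $\mathrm{Var}(Q)=2\sum_i(\Delta t_i)^2\le 2\,\Delta t\to 0$, with the net convergence handled by the fact that linear subdivision only decreases the mesh. The paper gives no proof of this proposition at all — it is invoked as a ``standard fact'' with a pointer to M\"{o}rters--Peres — and the standard proof being deferred to is precisely the one you wrote, so there is nothing to compare beyond noting that your write-up supplies the details the paper omits.
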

Therefore, in order to integrate normalized $1$-cochains $\Omega=\Omega(x,y)$ on $\text{Pair}(\mathbb{R})$ over Brownian paths, we need to remember the second order Taylor expansions, ie. map
\begin{equation}
   T\mathbb{R}\to\mathbb{R}\,,\;\;\;\lambda\partial_y\mapsto \lambda\partial_y\Omega(x,x)+\frac{\lambda^2}{2}\partial_y^2\Omega(x,x)\,.
\end{equation}
This means that the objects on $T\mathbb{R}$ we want to pullback and integrate over $[0,1]$ aren't of the form $A(x)\,dx,$ they are of the form 
\begin{equation}
    A(x)\,dx+B(x)dx^2\,;
\end{equation}
The first term is a degree one homogeneous map $T\mathbb{R}\to\mathbb{R}$ (ie. a one form\footnote{Any degree one homogeneous map $f:V\to\mathbb{R}$ on a vector space $V$ is automatically linear if it is smooth at the origin.}) and second term is a degree two homogeneous map $T\mathbb{R}\to\mathbb{R}.$ 
\begin{definition}\label{pointh}
For a manifold $X,$ let $\mathcal{H}_i(TX)$ denote smooth maps $TX\to\mathbb{R}$ which are pointwise homogeneous of degree $i.$
\end{definition}
\begin{definition}
For a $1$-cochain $\Omega$ on $\textup{Pair}\,\mathbb{R},$ let 
\begin{equation}
    VE_1(\Omega)\in \mathcal{H}_0(T\mathbb{R})\oplus\mathcal{H}_1(T\mathbb{R})\oplus \mathcal{H}_2(T\mathbb{R})
    \end{equation}
be equal to 
\begin{equation}
    \Omega(x,x)+\partial_2\Omega(x,x)\,dx+\frac{1}{2}\partial_2^2\Omega(x,x)\,dx^2\,.
\end{equation}
\end{definition}
\begin{lemma}
Let $\Omega_1,\Omega_2\in S_0^1\textup{Pair}\,\mathbb{R}$ be smooth, such that $VE_1(\Omega_1)=VE_1(\Omega_2).$ For a partition $0=t_0\le t_1\le\ldots\le t_n=1$ of $[0,1],$ consider the random variables
\begin{align}\label{it}
   &\mathbf{x}\mapsto \sum_{i=0}^{n-1}\mathbf{x}^*\Omega_1(t_i,t_{i+1})\;,
   \\&\mathbf{x}\mapsto\sum_{i=0}^{n-1}\mathbf{x}^*\Omega_2(t_i,t_{i+1})\;.
\end{align}
These random variables converge to the same random variable in $L^2(C([0,1],\mathbb{R}),\mu_W)$ as we take the limit over triangulations..
\end{lemma}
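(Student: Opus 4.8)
The plan is to subtract the two cochains and show that the Riemann sums of the difference tend to zero in $L^2$. Set $\Psi := \Omega_1-\Omega_2\in S_0^1\textup{Pair}\,\mathbb{R}$; it is smooth and, by linearity of $VE_1$, satisfies $VE_1(\Psi)=0$. The first step is to turn this into a vanishing-to-order-three statement along the identity bisection. Since $\Psi$ is normalized, $\Psi(x,x)=0$; since it is completely symmetric, comparing the source-fiber Taylor expansions of $\Psi(x,x+h)$ and of $\Psi(x+h,x)=\Psi(x+h,(x+h)+(-h))$ forces $\partial_2\Psi(x,x)=0$ as well (the linear terms have opposite signs). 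Hence for such a cochain $VE_1(\Psi)$ records exactly the quadratic term $\tfrac12\partial_2^2\Psi(x,x)\,dx^2$, and the hypothesis $VE_1(\Omega_1)=VE_1(\Omega_2)$ says precisely that $\partial_2^2\Psi(x,x)\equiv 0$. Taylor's theorem along the source fibers then gives $\Psi(x,y)=r(x,y)(y-x)^3$ with $r(x,y)=\int_0^1\tfrac{(1-s)^2}{2}\,\partial_2^3\Psi\big(x,x+s(y-x)\big)\,ds$ smooth, hence continuous and locally bounded.

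Next I would estimate the Riemann sum. For a triangulation $0=t_0<\cdots<t_n=1$ with mesh $\Delta t$, write $\Delta\mathbf{x}_i=\mathbf{x}(t_{i+1})-\mathbf{x}(t_i)$. On the event $A_K=\{\|\mathbf{x}\|_\infty\le K\}$ all sample values lie in $[-K,K]$, so $|r|\le C_K$ there and $\big|\sum_i\mathbf{x}^*\Psi(t_i,t_{i+1})\big|\le C_K\sum_i|\Delta\mathbf{x}_i|^3$ on $A_K$. It therefore suffices to show that $\sum_i|\Delta\mathbf{x}_i|^3\to 0$ in $L^2(\mu_W)$ and to control the contribution of $A_K^c$. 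For the former, independence of Brownian increments gives
\begin{equation*}
\mathbb{E}_{\mu_W}\Big[\Big(\sum_i|\Delta\mathbf{x}_i|^3\Big)^2\Big]\le c_6\sum_i(\Delta t_i)^3+c_3^2\Big(\sum_i(\Delta t_i)^{3/2}\Big)^2\le c_6\,\Delta t^2+c_3^2\,\Delta t,
\end{equation*}
with $c_p=\mathbb{E}|Z|^p$ for $Z$ standard Gaussian; this tends to $0$ with the mesh. (The same estimate shows $\sum_i|\Delta\mathbf{x}_i|^3\le\Delta t^{1/2}\sum_i|\Delta\mathbf{x}_i|^2\to 0$ almost surely along any refining sequence, using the a.s.\ convergence of the quadratic variation of $\mathbf{x}$.)

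Finally I would assemble these. Writing $S_\Delta=\sum_i\mathbf{x}^*\Psi(t_i,t_{i+1})$, split $\mathbb{E}[S_\Delta^2]=\mathbb{E}[S_\Delta^2\mathbbm{1}_{A_K}]+\mathbb{E}[S_\Delta^2\mathbbm{1}_{A_K^c}]$; the first term is $\le C_K^2\,\mathbb{E}_{\mu_W}\big[(\sum_i|\Delta\mathbf{x}_i|^3)^2\big]\to 0$ for each fixed $K$, and then one lets $K\to\infty$, using that $\mathbb{P}(A_K^c)$ decays Gaussianly in $K$ (Lévy's modulus of continuity / the reflection principle) while $S_\Delta^2\mathbbm{1}_{A_K^c}$ is dominated by a fixed polynomial in $\|\mathbf{x}\|_\infty$ times $(\sum_i|\Delta\mathbf{x}_i|^3)^2$, a product with all $\mu_W$-moments finite, so that $\sup_\Delta\mathbb{E}[S_\Delta^2\mathbbm{1}_{A_K^c}]\to 0$. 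Hence $\mathbb{E}_{\mu_W}[S_\Delta^2]\to 0$, i.e.\ the Riemann sums of $\mathbf{x}^*\Omega_1$ and $\mathbf{x}^*\Omega_2$ have the same limit in $L^2(C([0,1],\mathbb{R}),\mu_W)$. The one genuinely delicate point is this last truncation step: without any growth control on $\Omega_1,\Omega_2$ one must lean on the Gaussian tail of $\|\mathbf{x}\|_\infty$ to absorb the possible growth of $\partial_2^3\Psi$ on the line. If one adds the (harmless, and consistent with the hypotheses of \cref{main}) assumption that the third source-fiber derivatives of $\Omega_1,\Omega_2$ are bounded, the truncation is unnecessary and the $L^2$ convergence follows immediately from the second-moment bound above.
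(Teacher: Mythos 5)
The paper states this lemma without proof, so there is no argument of the author's to compare yours against; on its own terms your proof has the right core and is surely the intended mechanism. Reducing to $\Psi=\Omega_1-\Omega_2$ with $VE_1(\Psi)=0$, writing $\Psi(x,y)=r(x,y)(y-x)^3$ by Taylor's theorem with integral remainder, and then killing $\sum_i|\Delta\mathbf{x}_i|^3$ in $L^2$ by splitting the second moment into diagonal and off-diagonal terms and using independence and Gaussian scaling of the increments is exactly right, and your bound $c_6\,\Delta t^2+c_3^2\,\Delta t$ is correct (and yields net convergence over refinements, since the mesh only decreases under subdivision). The appeal to complete symmetry to get $\partial_2\Psi(x,x)=0$ is superfluous --- the definition of $VE_1$ already records the zeroth- and first-order coefficients, so the hypothesis kills them directly --- but it is not wrong.

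The one genuine gap is the one you flag yourself, and it should be stated more bluntly: it cannot be absorbed by Gaussian tails. For a general smooth $\Psi$ the factor $r$ may grow arbitrarily fast in the base point (e.g.\ $\Psi(x,y)=e^{e^{x^2}}(y-x)^3$ is a smooth normalized cochain with $VE_1(\Psi)=0$), in which case $S_\Delta$ fails to lie in $L^2(\mu_W)$ at all; your assertion that $S_\Delta^2\mathbbm{1}_{A_K^c}$ is ``dominated by a fixed polynomial in $\|\mathbf{x}\|_\infty$'' has no basis without a growth hypothesis on $\partial_2^3\Psi$. So the lemma needs, and your proof should openly assume, a bound of the kind already imposed on the cochain data in Theorem \ref{main} (bounded third derivative along the source fibers), after which the truncation is unnecessary and your second-moment estimate finishes the proof immediately. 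Two smaller points: you prove only that the difference of the two Riemann sums tends to $0$ in $L^2$, whereas the lemma literally asserts that each sum converges (that requires the separate standard fact that $\sum_i a(\mathbf{x}(t_i))\Delta\mathbf{x}_i+\sum_i b(\mathbf{x}(t_i))(\Delta\mathbf{x}_i)^2$ converges in $L^2$ for suitable $a,b$); and in your parenthetical almost-sure remark the factor $\Delta t^{1/2}$ should be $\max_i|\Delta\mathbf{x}_i|$, which is controlled by the modulus of continuity of the path rather than by $\Delta t^{1/2}$.
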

We will be defining $VE_{k}$ so that on a groupoid $G\rightrightarrows X,$ $VE_k(\Omega)$ computes the $(n+k)$th order Taylor expansion of an $n$-cochain $\Omega_x,$ for each $x\in X$ (or rather, we will use jets to do this).
\\\\Returning to the It\^{o} and Stratonovich integral and the cochains defined in \ref{itos}, we have that
\begin{equation}
VE_1(\Omega_I)=f(x)\,dx\,,\;\;\; VE_1(\Omega_S)=f(x)\,dx+\frac{1}{2}f'(x)\,dx^2\,.
\end{equation}
As we can see, the two cochains agree to order one, but not to order two, which is why the random variables in \ref{it} converge to different random variables. There is a natural embedding 
\begin{equation}
    \mathcal{H}_1(T\mathbb{R})\xhookrightarrow{}  \mathcal{H}_1(T\mathbb{R})\oplus  \mathcal{H}_2(T\mathbb{R})
\end{equation}
given by letting the degree $2$ term be zero, and from this perspective the It\^{o} integral is correct.
\\\\On the other hand, due to \ref{stok}, the Stratonovich integral satisfies the fundamental theorem of calculus since its cochain is completely antisymmetric, whereas the It\^{o} integral does not. In fact, one can check that the integral is unaffected by the choice of completely antisymmetric cochain as long as it has the correct first order term. 
\subsection{*Completely Antisymmetric Cochains in Physics}\label{physics}
Completely antisymmmetric cochains are implicitly used in physics as the midpoint prescription for Riemann sums is used for magnetic potentials in the path integral (see eg. \cite{gav}). This is done in order to recover the correct solution to Schr\"{o}dinger's equation.\footnote{Also classically, the defining property of the magnetic potential is that it satisfies Stokes' theorem with the magnetic field, so for this reason complete antisymmetry should be expected.} That is, one can say that when there is a magnetic potential $A\,dx$ in the classical action, what we are really integrating is
\begin{equation}\label{antip}
    S[\mathbf{x}]=\int_0^1 \frac{1}{2}\dot{\mathbf{x}}^2+V(\mathbf{x})\,dt+A(\mathbf{x})\dot{\mathbf{x}}\,dt+\frac{1}{2}A'(\mathbf{x})\dot{\mathbf{x}}^2 \,dt^2\;.
\end{equation}
For a smooth path $\mathbf{x},$ this is indistinguishable from
\begin{equation}\label{antipp}
    S[\mathbf{x}]=\int_0^1 \frac{1}{2}\dot{\mathbf{x}}^2+V(\mathbf{x})\,dt+A(\mathbf{x})\dot{\mathbf{x}}\,dt\;.
\end{equation}
To get from \ref{antipp} to \ref{antip} we can perform the complete antisymmetrization at the level of pointwise polynomials on $T\mathbb{R},$ ie. of $A(x)\,dx.$ This is a higher order antisymmetrization map. That is, previously we considered the nontrivial element of $S_2$ to act on $A(x)\,dx$ by 
\begin{equation}
    A(x)\,dx\mapsto -A(x)\,dx\,,
\end{equation}
but this is only the right action up to degree $1,$ which isn't good enough for Brownian paths. Consider the action of $S_2$ on 
\begin{equation}
   \prod_{i=0}^{\infty}\mathcal{H}_i(T\mathbb{R}) 
\end{equation} 
defined on homogeneous elements by
 \begin{equation}\label{high}
     A(x)\,dx^n\mapsto (-1)^n\sum_{k=0}^{\infty}\frac{1}{k!}A^{(k)}(x)\,dx^{n+k}\,.
 \end{equation}
Using this action the antisymmetrization of $A(x)\,dx$ is, up to order two,
\begin{equation}
    A(x)\,dx+\frac{1}{2}A'(x)\,dx^2\,.
\end{equation}
This leads to a higher order de Rham differential. For a smooth function $f:\mathbb{R}\to\mathbb{R}$ it agrees with the jet prolongation (see eg. \cite{saund}) and is given by
\begin{equation}
    f\xmapsto{}f'(x)\,dx+\frac{1}{2}f''(x)\,dx^2\;.
\end{equation}
Assuming the framework of integration we have been describing using cochains, this version of the de Rham differential should satisfy the fundamental theorem of calculus on $C([0,1],\mathbb{R})$ with respect to any measure which is concentrated on H\"{o}lder continuous functions with exponent greater than $1/3.$
\subsection{*Topological Quantum  Field Theories}\label{tqfts}
A \textit{relatively} simple class of Lie algebroid-valued quantum field theories are ones with homotopy invariant actions. TQFTs are some of the most well-studied QFTs, eg. Chern-Simons. For the definition of homotopy of Lie algebroid morphisms and their relevance to the Poisson sigma model, see  Bojowald–Kotov–Strobl \cite{bojo}, also Bonechi–Zabzine \cite{zab}.
\begin{definition}
Let $Y$ be an $n$-dimensional manifold (with boundary). A Lie algebroid-valued quantum field theory with paths $\mathbf{x}:TY\to\mathfrak{g}$ is topological if the action only depends on the homotopy class of $\mathbf{x},$ relative to the boundary. 
\end{definition}
$\,$\\In this case, the observables will turn out to only depend on homotopy classes of maps, relative to the boundary (we discuss this in the last section). In particular, if the action is of the form 
\begin{equation}
    S[\mathbf{x}]=\int_Y \mathbf{x}^*\omega
\end{equation}
for some cocycle $\omega\in\Lambda^n\mathfrak{g}^*,$ then the quantum field theory is topological. However, we will see more general topological actions when discussing Chern-Simons with finite gauge group.
\begin{comment}In this case, the observables are generated algebraically by 
\begin{enumerate}
    \item integrating cocycles over $\mathbf{x}\vert_S,$ where $S$ is a closed submanifold in the interior,
    \item integrating over $\mathbf{x}\vert_S, $ where $S$ is a submanifold of the boundary (here we aren't necessarily integrating a cocycle).
\end{enumerate}
\end{comment}
\begin{exmp}
Feynman and Wiener's path integrals are not topological.
\end{exmp}
\begin{exmp}
    Let $(M,\omega)$ be a symplectic manifold. For a surface $Y$ we have an action given by
    \begin{equation}
        S[\mathbf{x}]=\int_Y \mathbf{x}^*\omega\;.
    \end{equation}
This theory is topological and is a special case of the Poisson sigma model. We will discuss this example more later.
\end{exmp}
\section{The Graded van Est Map}\label{ve2}
Motivated by the discussion in the previous sections, we define a version of the van Est map; it is graded in the sense that its image is a graded vector space if we have a Riemannian metric along the source fibers . We will be using jets, for a textbook account see Saunders \cite{saund}.
\begin{definition}
Let $G\rightrightarrows X$ be a (local) Lie groupoid and let $C^n(X)$ be germs of $n$-cochains\footnote{We only really need to require them to be Boreal measurable.} near the identity bisection (ie. we identify functions defined on $C^n(G)$ if they agree on some neighborhood of the identity bisection). For $k\ge -n$ let $I_k^n(X)$ be the subspace of $C^n(X)$ consisting of those $n$-cochains $\Omega$ such that for each $x\in X,$ the derivatives of order $\le n+k$ of $\Omega_x$ vanish at $x$ (if this is true in one coordinate system it is true in all coordinate systems). We define 
\begin{equation}
    J^n_k(X)=C^n(X)/I^n_k(X)\;.
\end{equation}
\end{definition}
\begin{definition}\label{jetv}
Let $G\rightrightarrows X$ be a Lie groupoid. For an $n$-cochain $\Omega$ and for $k\ge -n,$ let
\begin{equation}
    VE_k^n:C^n(G)\to J^n_k(X)
\end{equation}
be the natural quotient map. If $k<-n$ define $VE_k=0.$
\end{definition}
Note that, in practice we suppress the upper index and just write $VE_k.$ 
\\\\There is a natural identification of $C^n(\mathfrak{g})$ with  the subspace of normalized cochains in $J^n_0(X),$ and under this identification the graded van Est map, with $k=0,$ agrees with the previously defined $VE_0\,.$
\\\\Note that, \ref{jetv} makes sense even for cochains in $C^n(G)$ that aren't smooth. We will discuss this further when we discuss the Riemann-Stieltjes integral on manifolds in the next section.
\begin{remark}
If we have a Riemannian metric along the source fibers of $G\rightrightarrows X$ then we can identify the smooth subspace inside $J^n_k(X)$ with sections of
\begin{equation}
\mathbb{R}\oplus \mathfrak{g}^{*\oplus n}\oplus \text{Sym}^2\mathfrak{g}^{*\oplus n}\oplus\cdots\oplus \text{Sym}^{n+k}\mathfrak{g}^{*\oplus n}\;,\footnote{Since we can use a metric to split the short exact sequence of jet bundles $0\to\text{Sym}^n T^*M\to J^n(M)\to J^{n-1}(M)\to 0.$ See eg. \cite{vakil}. Also see covariant Taylor expansions in \cite{avra}.}
\end{equation}
where for a vector space $V,$ $\text{Sym}^iV^*$ are linear maps on $V^{\otimes i}$ which are invariant under the permutation action of $S_i.$ If $P\in S^i(V)$ then we get a homogeneous function of degree $i$ on $V$ given by $v\mapsto P(v,\ldots,v),$ for $v\in V.$\\\\
Therefore, when we have such a metric, we can view $VE_k$ as being a map
\begin{equation}
    VE_k:C^n(G)\to  \mathcal{H}^n_0(\mathfrak{g})\oplus\mathcal{H}_1^n(\mathfrak{g})\oplus\cdots\oplus \mathcal{H}^n_{n+k}(\mathfrak{g})\,,
\end{equation}
where $\mathcal{H}_i^n(\mathfrak{g})$ are degree $i$ homogeneous functions on $\mathfrak{g}^{\oplus n}.$ When working with normalized cochains the image is concentrated in degree $\ge n.$ It would be interesting to describe the complete action of $A_{n+1}$ on the right hand side, see \ref{high}.
\end{remark}
For examples, see example \ref{first} and \cref{brown}.
\section{**Riemann-Stieltjes Integrals on Manifolds}
This framework suggests a simple generalization of the Riemann-Stieltjes integral to manifolds. Note that, at least grammatically, a lot of what we are doing makes sense for piecewise linear manifolds (and even pseudomanifolds), and one can extend the theorems proved in \cref{funm}.
\begin{definition}\label{RSm}
Let $M$ be $n$-dimensional and oriented, and let $\Omega\in\Lambda^{n-1}\textup{Pair}\,\textup{M}^*.$ Then for $f:M\to\mathbb{R}$ we can define the following limit (if it exists):\footnote{We may need to fix an equivalence class of triangulations to do this; at the moment it's not clear to what extent this definition is independent of equivalence classes.}
\begin{equation}\label{rs}
\int_M fd\Omega=\lim_{\Delta_M\in\mathcal{T}_M}\sum_{\Delta\in\Delta_M}(s^*f(\Delta))\delta^*\Omega(\Delta)\;.
\end{equation}
\end{definition}
Before addressing existence of the limit, we make the following definition:
\begin{definition}\label{tota}
Let $\Omega\in \mathcal{A}^n\textup{Pair}\,\textup{M}^*_{\textup{loc}}$\footnote{For more general groupoids, one way of making sense of a cochain's total variation being zero is by pulling it back to pair groupoids.} where $M$ is $n$-dimensional (and oriented if necessary). We define its total variation to be\footnote{Depending on the cochain, one may have to fix an equivalence class of triangulations in order to talk about its total variation.}
\begin{equation}
\limsup_{\Delta_M\in\mathcal{T}_M}\sum_{\Delta\in\Delta_M}|\Omega|\;.
\end{equation}
\end{definition}
\begin{exmp}
Let $f:[0,1]\to\mathbb{R}$ be a continuous function. Then its total variation in the traditional sense is the total variation of $\delta^*f.$
\end{exmp}
For
\begin{equation}
    \int_a^b f\,dg
\end{equation}
to exist it is enough that $f$ is continuous and that $g$ has bounded variation, which according to our \cref{tota} means that $\delta^*g$ has bounded variation. The analogue holds on manifolds:
\begin{proposition}
Suppose that $f$ is continuous and that $\delta^*\Omega$ has bounded variation. Then \ref{rs} exists (for the given equivalence class of triangulations which we take the total variation with respect to).
\end{proposition}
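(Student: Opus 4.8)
The plan is to show that the net
$\bigl\{\,\sum_{\Delta\in\Delta_M}(s^*f(\Delta))\,\delta^*\Omega(\Delta)\,\bigr\}_{\Delta_M}$, indexed by the fixed equivalence class of triangulations of $M$ — a directed set under linear subdivision by \cref{equiv} — is Cauchy in $\mathbb{R}$; since $\mathbb{R}$ is complete, the limit in \ref{rs} then exists. Because the index set is directed, any two triangulations dominating a given $\Delta_0$ have a common linear subdivision in the class, so (by the triangle inequality, comparing each with such a common subdivision) it is enough to estimate $|S(\Delta_M)-S(\Delta_M')|$ when $\Delta_M'$ refines $\Delta_M$; here I abbreviate $S(\Delta_M):=\sum_{\Delta\in\Delta_M}(s^*f(\Delta))\,\delta^*\Omega(\Delta)$ and note $s^*f(\Delta)=f(v_0(\Delta))$, with $v_0(\Delta)$ the first vertex of the simplex $\Delta$.

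The estimate then runs as in the classical Riemann--Stieltjes integrability criterion. First, the hypothesis that $\delta^*\Omega$ has bounded variation means, by \cref{tota}, that the net $\Delta_M\mapsto\sum_{\Delta\in\Delta_M}|\delta^*\Omega(\Delta)|$ is bounded; granting the additivity identity recorded below, this net is non-decreasing under refinement (triangle inequality), so its supremum $V<\infty$ dominates every term. Next, fix an $n$-simplex $\Delta$ of $\Delta_M$ with subdivision $\{\Delta^{(1)},\dots,\Delta^{(r)}\}$ inside $\Delta_M'$, expand $\delta^*\Omega$ on each $\Delta^{(l)}$ into its alternating face contributions, and observe via the Triangulated Stokes' Theorem (\cref{stok}) that the faces interior to $|\Delta|$ cancel in pairs; this yields the additivity identity $\delta^*\Omega(\Delta)=\sum_l\delta^*\Omega(\Delta^{(l)})$. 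Substituting this telescoping into $S(\Delta_M)$ gives
\[
S(\Delta_M)-S(\Delta_M')=\sum_{\Delta\in\Delta_M}\ \sum_{l}\bigl(f(v_0(\Delta))-f(v_0(\Delta^{(l)}))\bigr)\,\delta^*\Omega(\Delta^{(l)}),
\]
so $|S(\Delta_M)-S(\Delta_M')|\le \omega_f(\operatorname{mesh}\Delta_M)\cdot V$, where $\omega_f$ is the modulus of continuity of $f$ and $\operatorname{mesh}\Delta_M$ is the largest diameter in $M$ of a simplex of $\Delta_M$ (in any fixed background metric; legitimate since $M$ is compact). As $f$ is uniformly continuous on compact $M$, and a linear subdivision never increases the mesh, choosing $\Delta_0$ with mesh small enough that $\omega_f(\operatorname{mesh}\Delta_0)\cdot V<\varepsilon$ forces $|S(\Delta_M)-S(\Delta_M')|<\varepsilon$ for all $\Delta_M,\Delta_M'\ge\Delta_0$, giving the Cauchy property.

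The main obstacle is the additivity identity $\delta^*\Omega(\Delta)=\sum_l\delta^*\Omega(\Delta^{(l)})$ under subdivision, on which the whole estimate (and the monotonicity of the variation net) rests. For $\dim M=1$ it is immediate: there $\Omega$ is a function $g$, $\delta^*\Omega(x,y)=g(y)-g(x)$, and the identity is simply the telescoping of $g$ — so in that case the argument is complete and reproduces the usual proof that $\int_a^b f\,dg$ exists when $f$ is continuous and $g$ has bounded variation. In dimension $\ge 2$, however, cancelling the interior faces via \cref{stok} leaves a residual boundary discrepancy $\sum_{\text{coarse faces}}\Omega-\sum_{\text{fine faces}}\Omega$, which vanishes only when $\Omega$ is itself additive along the codimension-one skeleton — a point the footnotes to \cref{RSm} and \cref{tota} already flag as unclear. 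Handling this term is where the proof must be supplemented: either by additionally requiring $\Omega$ to have bounded variation as an $(n-1)$-cochain (which makes the discrepancy itself $O(\omega_f(\operatorname{mesh}\Delta_M))$ and closes the estimate), or by restricting to those $\Omega$ whose coboundary is genuinely additive under subdivision, for which the argument above applies verbatim.
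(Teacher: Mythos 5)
Your overall strategy is the same as the paper's --- both adapt the classical existence proof for $\int_a^b f\,dg$ with $f$ continuous and $g$ of bounded variation, and both lean on \cref{stok} to control refinements --- but the final convergence mechanism differs. The paper introduces the variation cochain $V_\bullet(\delta^*\Omega)$, argues that $V_\bullet(\delta^*\Omega)$ and $V_\bullet(\delta^*\Omega)-\delta^*\Omega$ are non-negative and additive under subdivision, and then runs a Darboux-type argument, showing $\sum_{\Delta}(\sup_{|\Delta|}f-\inf_{|\Delta|}f)\,\mu(\Delta)\to 0$ for each of these two additive set functions $\mu$. You instead verify the Cauchy criterion for the net directly, telescoping $\delta^*\Omega$ over a refinement and bounding the discrepancy by $\omega_f(\operatorname{mesh}\Delta_M)\cdot V$. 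These are the two standard one-dimensional routes, and either would be acceptable here; yours is slightly more economical in that it avoids introducing $V_\bullet$ as a cochain.

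The gap is the one you flag yourself: the subdivision identity $\delta^*\Omega(\Delta)=\sum_l\delta^*\Omega(\Delta^{(l)})$, on which your telescoping (and the boundedness and monotonicity of the variation net) depends, genuinely fails for $\dim M\ge 2$. By \cref{stok}, each side equals a sum of $\Omega$ over the faces of $\partial|\Delta|$ \emph{with the respective induced triangulations}, and $\Omega$ need not be additive along the $(n-1)$-skeleton; concretely, on $|\Delta^2|\subset\mathbb{R}^2$ with the completely antisymmetric $1$-cochain $\Omega(x,y)=(y_1-x_1)^3$ and the subdivision obtained by bisecting the edge $[v_1,v_2]$ of $[(0,0),(1,0),(0,1)]$, the coarse value of $\delta^*\Omega$ is $0$ while the two fine values sum to a nonzero constant --- even though this $\delta^*\Omega$ has bounded (indeed vanishing) total variation, so the hypothesis of the proposition does not rescue the identity. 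Since you end with two conditional patches rather than an estimate of this boundary discrepancy, your proposal proves the proposition only for $\dim M=1$. That said, you have put your finger on exactly the step the paper's own proof treats most lightly: the same additivity underlies both the refinement inequality that the paper derives from \cref{stok} ``and the triangle inequality'' and the asserted additivity of $V_\bullet(\delta^*\Omega)-\delta^*\Omega$. To close the argument you would need to carry out one of your proposed supplements --- most naturally, imposing a bounded-variation (or additivity) condition on $\Omega$ itself as an $(n-1)$-cochain so that the coarse-versus-fine boundary sums contribute an error that is also controlled by $\omega_f(\operatorname{mesh}\Delta_M)$ --- or else restrict the claim to the one-dimensional case, where your argument is complete.
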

\begin{proof}
The proof is an adaptation of the standard proof for the case of an interval. The result is true if and only if it's true on each top dimensional face of our geometric triangulation, so we will assume $M=|\Delta^n|.$ First, we need to show that if $|\Delta_M|\le |\Delta_M'|,$ then
\begin{equation}\label{inc}
    \sum_{\Delta\in\Delta_M}|\delta^*\Omega|\le\sum_{\Delta\in \Delta_M'}|\delta^*\Omega|\;.
\end{equation}
This follows from \cref{stok} (the triangulated Stokes' theorem) and the triangle inequality. Next, for $(x_0,\ldots,x_n)\in \textup{Pair}^{(n)}\,M,$ define $V_{(x_0,\ldots,x_n)}(\delta^*\Omega)$ to be the total variation of $\delta^*\Omega$ over linear subdivisions of the convex hull of $x_0,\ldots,x_n\in M.$ We then get a completely symmetric $n$-cochain $V_{\bullet}(\delta^*\Omega)$ given by
\begin{equation}
    V_{\bullet}(\delta^*\Omega)(x_0,\ldots,x_n)=V_{(x_0,\ldots,x_n)}(\delta^*\Omega)\;.
\end{equation}
 From \ref{inc}, it follows that $V_{\bullet}(\delta^*\Omega)$ is additive in the sense that if $M$ is subdivided by the top-dimensional faces $\Delta_1,\ldots,\Delta_k,$ then
 \begin{equation}
      V_{\bullet}(\delta^*\Omega)(M)=\sum_{i=1}^k V_{\bullet}(\delta^*\Omega)(\Delta_i)\;.
 \end{equation}
 The same is true for $V_{\bullet}(\delta^*\Omega)-\delta^*f.$
We can write the right side of \ref{rs} as
\begin{equation}
    \lim_{\Delta_M}\sum_{\Delta\in\Delta_M}(s^*f)V_{\Delta}(\delta^*\Omega)\;-\sum_{\Delta\in\Delta_M}(s^*f)(V_{\Delta}(\delta^*\Omega)-\delta^*\Omega(\Delta))\;.
\end{equation}
Therefore, to show convergence it is enough to show convergence of both terms. This follows by observing that the following terms go to $0$ as we take the limit over triangulations:
\begin{align}
    &\sum_{\Delta\in\Delta_M}(\sup_{|\Delta|}{f}-\inf_{|\Delta|}{f})V_{\Delta}(\delta^*\Omega)\;,
    \\& \sum_{\Delta\in\Delta_M}(\sup_{|\Delta|}{f}-\inf_{|\Delta|}{f})(V_{\Delta}(\delta^*\Omega)-\delta^*\Omega(\Delta))\;,
\end{align}
where the infimum and supremum are taken over all points in $|\Delta|.$
\end{proof}
More generally, this result holds if we replace $\delta^*\Omega$ with any completely antisymmetric cocycle since cocycles on the local pair groupoid are locally trivial (ie. on the full subgroupoid over a small enough open set in the base).
\\\\We can construct the associated (signed) measure as a direct limit in the categorical sense, as we did for the Lebesgue measure in \cref{lebe}. At least on the interval, all regular Borel measures are obtained this way via the Riesz representation theorem, ie. for a $0$-cochain $\Omega$ with $\delta^*\Omega\ge 0$ on oriented $1$-simplicies.
\begin{definition}
Let $M$ be an $n$-dimensional (oriented) manifold with triangulation $|\Delta_M|,$ and let $\Omega,\Omega'\in \mathcal{A}^n\textup{Pair}\,\textup{M}^*_{\textup{loc}}.$ We say that $\Omega\sim\Omega'$ if the total variation of $\Omega-\Omega'$ is $0$ with respect to triangulations equivalent to $|\Delta_M|.$
\end{definition}
\begin{exmp}
If $\Omega$ is smooth, then
\begin{equation}
    \int_M fd\Omega=\int_M f d\,VE_0(\Omega)\;.
\end{equation}
\end{exmp}
\section{Groupoid Construction of Functional Integrals}
Using simplicial complexes to approximate spaces in physics and mathematics is a old idea that is used in, for example, lattice gauge theory and Regge calculus, see \cite{wilson}, \cite{regge}. We use simplicial complexes in the titular groupoid approach to the functional integral, which is motivated by \cref{funco}. We won't do further analysis here, however general convergence results for QFTs on a lattice are discussed in \cite{fuji}, \cite{glimm}.
\\\\We have included a \textit{rough} correspondence of Lie algebroid$\Longleftrightarrow$ Lie Groupoid data in \ref{corr}.
\subsection{Data and Approximations}
The data we are presented with in functional integrals is often of the following form (or some minor modification of):
\begin{enumerate}
 \item An (oriented) compact $n$-dimensional manifold $M$ (with boundary),
    \item A Lie algebroid $\mathfrak{g}\to X\,,$
    \item $\omega_M\in \Gamma(\Lambda^{\bullet}TM^*)\,,$
    \item  $\omega_X\in \Gamma(\Lambda^{\bullet}\,\mathfrak{g}^*)\,,$
    \item An action functional 
    \begin{equation}
        S:\text{Hom}(TM,\mathfrak{g})\to\mathbb{R},\;S[\phi]=\int_M \mathcal{L}\;,(\omega_M,\phi^*\omega_X)
        \end{equation}
        where $ \mathcal{L}(\omega_M,\phi^*\omega_X)\in \Gamma(\Lambda^{n}\,TM^*)\;,$\footnote{We view this as the composition of maps, where $\mathcal{L}$ is real-valued.}
    \item $x_1,\ldots,x_k\in M\;,$
    \item A continuous function $F:X^k\to\mathbb{C}\,,$
    \item $\lambda\in\mathbb{C}\,.$
\end{enumerate}
In which case the functional integral is given by:
\begin{align}\label{func}
    \int_{\{\phi:TM\to\mathfrak{g}\}}\mathcal{D}\phi\,e^{\lambda S[\phi]}\,F(\phi(x_1),\ldots,\phi(x_k))\;.
\end{align}
To approximate \ref{func}, we use $VE_{\bullet}$ to integrate $\omega_M,\omega_X$ to cochain data $\Omega_M,\Omega_X$ on $ \text{Pair}\,Y_{ loc}\,,\, G_{ loc}$\footnote{It always makes sense to integrate with respect to $VE_0.$ To integrate to higher order we need a metric along the source fibers (or some splitting).} Heuristically, the degree $\bullet$ we use for $VE_{\bullet}$ is determined by the fact that the ``measure" should be concentrated on morphisms with as high a H\"{o}lder regularity as possible for which the action diverges. Note that, the local Lie groupoid always exists, see Crainic–Fernandes \cite{rui}, \cite{ruif}. Also see Cabrera–M\v{a}rcuţ–Salazar \cite{cabr} for an alternative constructon.
\\\\We get an approximation to \ref{func} by choosing a triangulation $\Delta_M$ of $M,$ forming the associated simplicial set and computing
\begin{equation}\label{est}
\int_{\{\phi:\Delta_M\to G_{\textup{loc}}\}}\mathcal{D}_{\Delta}\phi\,e^{\lambda\sum_{\Delta\in \Delta_M} \mathcal{L}(\Omega_M,\phi^*\Omega_X)(\Delta)}\,F(\phi(x_1),\ldots,\phi(x_k))\;.
\end{equation}
We then take the limit over all triangulations equivalent to $\Delta_M$ and define this to be the functional integral \ref{func}, assuming the limit exists and is independent of which equivalence class of triangulations we use to take the limit.\footnote{Of course, we are mostly ignoring any use of renormalization in this paper.}
\subsubsection{Sigma Algebra}
Heuristically, we think of $\mathcal{D}\phi\,e^{\lambda S[\phi]}$ as being a measure\footnote{A complex measure if $\lambda$ isn't real. Of course, in Brownian motion this is more than a heuristic.} on the space of \textit{continuous} Lie algebroid morphisms $\phi:TM\to\mathfrak{g},$ by which we mean the space of \textit{continuous} morphisms of local groupoids 
\begin{equation}
    \text{Pair}\,Y_{ loc}\to G_{ loc}
    \end{equation}
with respect to some $\sigma$-algebra. In Brownian motion and Feynman's path integral, this is the Borel $\sigma$-algebra of the compact open topology.
\\\\On the other hand if the theory is topological, by which we meanv $\omega_M=1$ and $\omega_X$ only consists of cocycles (see \cref{tqfts}), then the $\sigma$-algebra is the sub-$\sigma$-algebra generated by homotopy classes of maps relative to the boundary of $M$ (ie. the finest sub-$\sigma$-algebra of the Borel $\sigma$-algebra of the compact open topology such that homotopy classes of maps relative to the boundary are atoms). At the level of groupoids, this is the sub-$\sigma$-algebra of maps generated by natural isomorphism classes of maps relative to the boundary.
\subsubsection{The Measure}
The term $\mathcal{D}\phi$ (which we write as $D_{\Delta}\phi$ in the approximation) is the only term which is typically specified at the level of $\text{Hom}(\Delta_M,G_{\textup{loc}}),$ not at the level of $\text{Hom}(TM,\mathfrak{g}).$ Some natural data associated with $\mathfrak{g}, G_{\textup{loc}}$ that can help determine such a measure include measures on: the orbits, isotropy groups, leaf space, source fibers, space of arrows; Dirac measures. In the case of Brownian motion we have a Haar measure (ie. a translation invariant measure along the source fibers) determined by the Riemannian metric, and that is all we need. 
\\\\In the case of Poisson manifolds, there is a natural measure on each orbit and on the space of arrows. In the case that the symplectic groupoid has compact isotropy (eg. if the Poisson manifold is symplectic), there is a canonical Haar measure due to the fact that for a fixed source fiber the target map is a principal bundle over the orbit (for the isotropy group at the corresponding source). If instead the isotropy groups are just unimodular, eg. if the Poisson structure is regular, then there is a canonical Haar measure up to a Casimir function.
\begin{remark}
We could consider more general functional integrals than those of the form \ref{func}, however many cases of interest are of this form. Some which are not are those coming from fractional Brownian motion, see eg. \cite{meerk}. However, they are not so different, this framework may still be applicable.
\end{remark}
\subsubsection{*The Functional Integral as an Improper Integral}\label{imrop}
If we replace $G_{\textup{loc}}$ by its closure then we can triangulate $\text{hom}(\Delta_M,G_{\textup{loc}})$ and apply the cochain construction to it, using
\begin{equation}
    \mathcal{D}_{\Delta}\phi\,e^{\lambda S[\phi]}
    \end{equation}
as our form. The finite dimensional integrals are then generalized ``Riemann" integrals. However, $\text{hom}(\text{Pair}\,M_{\textup{loc}},G_{\textup{loc}})$ isn't compact and so we can't even triangulate it. This is what happens in the case of an improper Riemann integral: we can triangulate $[a,b]$ but not $\mathbb{R},$ so instead we define the integral on a sequence of compact intervals and take a limit; sometimes this construction even results in a measure.\footnote{For this discussion in the context of Brownian motion, see \cref{impropb}.}
\\\\We can think of this as follows: let $(A,\mathcal{A})$ be a measurable space and consider the category of sub-$\sigma$-algebras of trace $\sigma$-algebras with (complex) measures, ordered by inclusion, ie.
\begin{enumerate}
    \item an object is a (complex) measure space $(B,\mathcal{B},\mu_\mathcal{B})$ where $B\subset A,\,\mathcal{B}\subset \mathcal{A}\,,$
    \item there is a unique morphism $(C,\mathcal{C})\to (B,\mathcal{B})$ if $\mathcal{C}\subset\mathcal{B}$ and if $\mu_{\mathcal{B}}\vert_{\mathcal{C}}=\mu_{\mathcal{C}}\,.$
\end{enumerate}
Now, let $\mathcal{I}$ be a directed set and consider a direct system $\{(A_i,\mathcal{A}_i,\mu_{\mathcal{A}_i})\}_{i\in I}\,$ such that the direct limit of $\{(A_i,\mathcal{A}_i)\}_{i\in I}\,$ is $(A,\mathcal{A}).$ 
\\\\This makes $(A,\mathcal{A})$ into a kind of improper measure space, since the direct limit of $\{(A_i,\mathcal{A}_i,\mu_{\mathcal{A}_i})\}_{i\in I}\,$ may not exist. We can try to compute \textit{improper} integrals over $(A,\mathcal{A})$ by taking the limit of $\mu(A_i)$ in the sense of nets. 
\begin{exmp}
Let $A=\mathbb{R}$ with the Borel $\sigma$-algebra and pick a (complex) $1$-form $fdx.$ Let the directed set be compact intervals ordered by inclusion, and let $A_{[a,b]}=[a,b]$ with the Borel $\sigma$-algebra. The direct limit of this system is $\mathbb{R}$ and we can pull back $fdx$ to a (complex) measure on $[a,b],$ in which case the limit of $\int_a^b f\,dx$ is the improper Riemann integral. Note that, the direct limit exists as a complex measure space if and only if $|f|\in L^1(\mathbb{R}).$
\end{exmp}
\begin{exmp}
Let $A=\textup{hom}(\textup{Pair}\,\textup{M}_{\textup{loc}},G_{\textup{loc}}),$ with $\mathcal{A}$ being the Borel $\sigma$-algebra of the compact open topology and fix an equivalence class of triangulations on $M;$ this will be our directed set.
\\\\Let $A_{\Delta_M}=\textup{hom}(\textup{Pair}\,\textup{M}_{\textup{loc}},G_{\textup{loc}})$ and let $\mathcal{A}_{\Delta_M}$ be the finest sub-$\sigma$-algebra that doesn't separate morphsisms which agree on $\Delta_M,$ ie. a set $B\in\mathcal{A}_{\Delta_M}$ if and only if $B\in\mathcal{A}$ and if $f,g$ agree on $\Delta_M$ then $f\in B\implies g\in B.$
\\\\
The direct limit of this system is $\mathcal{A}.$ Suppose we have our corresponding (complex) measures  $\mathcal{D}_{\Delta}\phi\,e^{\lambda S[\phi]};$ these won't necessarily form a direct system. However, for $B\in \mathcal{A}_{\Delta_M'}$ we can define (if it converges)
\begin{equation}
    \mu_{\Delta'_M}(B)=\lim\limits_{\Delta_M}\int_B\mathcal{D}_{\Delta}\phi\,e^{\lambda S[\phi]}\;,
\end{equation}
and this gives us a direct system of measure spaces (if all of these integrals converge). We can use this direct system to compute the \textit{improper} functional integrals.
\\\\In \cref{direct} we showed that Wiener space is a direct limit of such measure spaces. For the Feynman path integral, it is well-known that the direct limit doesn't exist as a complex measure space.
\end{exmp}
\begin{comment}
Then the \textup{improper} integral we want to compute is
\begin{equation}
    \lim\limits_{\Delta_M}\;\int_{A_{\Delta_M}}\mathcal{D}_{\Delta}\phi\,e^{\lambda S[\phi]}\;.
\end{equation}
Except for in special cases (eg. Euclidean field theories, some TQFTs) this won't result in an actual measure – the Lebesgue measure isn't even a complex measure. However, we would at least like to have that
\begin{equation}
     \mu_{\Delta'_M}:\mathcal{A}_{\Delta'_M}\to\mathbb{C}\,,\;\;\;\mu_{\Delta'_M}(B)=\lim\limits_{\Delta_M}\;\int_{\textup{hom}(\textup{Pair}\,\textup{M}_{\textup{loc}},G_{\textup{loc}})}\mathcal{D}_{\Delta}\phi\,e^{\lambda S[\phi]}\mathbbm{1}_B
\end{equation}
is a complex measure on $\textup{hom}(\textup{Pair}\,\textup{M}_{\textup{loc}},G_{\textup{loc}})$ for each $\Delta'_M,$ in such a way that we have a direct system of measure spaces
\begin{equation}
   \{(\textup{hom}(\textup{Pair}\,\textup{M}_{\textup{loc}},G_{\textup{loc}}),\mathcal{A}_{\Delta_M},\mu_{\Delta_M})\}_{\Delta_M}\;.
\end{equation}
\end{comment}
\subsection{Examples}
Here we will show some examples of the previous construction.
\begin{exmp}
As described in the introduction, Brownian motion and Feynman's path integral can be described in this way. If we want to include a magnetic potential $A(x)\,dx,$ then as discussed in \cref{physics} the action we really want to consider is
\begin{equation}
    S[\mathbf{x}]=\int_0^1 \frac{1}{2}\dot{\mathbf{x}}^2\,dt-V(\mathbf{x})\,dt+A(\mathbf{x})\dot{\mathbf{x}}\,dt+\frac{1}{2}A'(\mathbf{x})\dot{\mathbf{x}}^2 \,dt^2\,.
\end{equation}
Classsically, ie. for smooth paths, this action is indistinguishable from
\begin{equation}
    S[\mathbf{x}]=\int_0^1 \frac{1}{2}\dot{\mathbf{x}}^2\,dt-V(\mathbf{x})\,dt+A(\mathbf{x})\dot{\mathbf{x}}\,dt\;.
\end{equation}
However with respect to quantum (or Brownian) paths, they are distinct, since these paths have H\"{o}lder exponent in $(1/3,1/2)$ on a set of full measure. Therefore, we must use $VE_1$ to compute the cochains, rather than $VE_0.$ That is, we need a $1$-cochain $\Omega$ on $\textup{Pair}\,\mathbb{R}$ such
\begin{equation}\label{ho}
    VE_1(\Omega)=A(x)\,dx+\frac{1}{2}A'(x)\,dx^2\;.
\end{equation}
As discussed in \cref{physics}, any completely antisymmetric $1$-cochain $\Omega$ such that $VE_0(\Omega)=A(x)\,dx$ will satisfy \ref{ho}. For example, we can take $\Omega$ to be either of
\begin{align}\label{pote}
& A((x+y)/2)(y-x)\,,
\\& \nonumber \frac{1}{2}(A(x)+A(y))(y-x)\;.
\end{align}
We still need to specify the cochain data for $dx^2,\, V(x),\, dt.$ The most obvious choices are, respectively,
\begin{align}
&\nonumber(x,y)\mapsto (y-x)^2\;,
\\&\nonumber (x,y)\mapsto V(x)\;,
\\ & (t_0,t_1)\mapsto t_1-t_0\;.
\end{align}
Using \ref{pote} for $A(x)\,dx$ and letting $0=t_0<t_1<\cdots< t_N=1$ be an evenly spaced triangulation $\Delta_{[0,1]},$ the approximation to the path integral is
\begin{align}
& \nonumber\psi(x,1)=\lim\limits_{N\to\infty}\sqrt{\frac{mN}{2\pi i\hbar}}\int_{\mathbb{R}^N}\prod_{n=0}^{N-1}d\mathbf{x}_n\,e^{\frac{1}{N}\frac{i}{\hbar}\sum_{n=0}^{N-1} S[x_n,x_{n+1}]}\psi_i(\mathbf{x}_{N-1})\;\,,
\\&\nonumber
\\& S[x_{n},x_{n+1}]=\frac{m}{2}N^2(\mathbf{x}_{n+1}-\mathbf{x}_{n})^2-V(\mathbf{x}_{n})+N\,A((x_{n+1}+x_n)/2)(x_{n+1}-x_n)\;.
\end{align}
This agrees with the usual formulation, see \cite{gav} (of course, we are free to pick from more general cochain data).
Here, $dx$ is the Haar measure on $\textup{Pair}\,\mathbb{R}$ determined by the Euclidean metric. Note that, we have used the fact that
\begin{equation}
\{\mathbf{x}\in\textup{Hom}(\Delta_{[0,1]},\textup{Pair}\,\mathbb{R})\,:\mathbf{x}(0)=x_i\}=\{(x_0,x_1,\ldots,x_N)\in\textup{Pair}^{(N)}\,\mathbb{R}\,: x_0=x_i\}\;.
\end{equation}
\end{exmp}
\begin{exmp}
Here we exhibit a kind of generalized Riemann-Lebesgue duality. Let $M$ be a manifold and consider a top form $\omega.$ Let $*$ be a point. There are two natural functional integrals associated with this data. We can consider:
\begin{enumerate}
    \item $\textup{hom}(TM,*)=*\,,$
    \item $\textup{hom}(*,TM)=M\,.$
\end{enumerate}
Applying our framework to the former gives us our construction of $\int_M \omega.$ Applying our framework to the latter gives us the (signed) measure corresponding to $\omega.$ 
\begin{comment}\\\\Now let $\{\varphi_i:\mathbb{R}^{\textup{dim }M}\to M\}_i$ be an open cover of $M$ (let's assume $\partial M=\emptyset\,$) by coordinate charts, and let $\{\phi_i:\mathbb{R}^n\to\mathbb{R}\}_i$ be a partition of unity subordinate to this open cover. We get the traditional construction of the integral by computing 
\begin{equation}
    \int_{\varphi\in C(\mathbb{R}^{\textup{dim }M},M)}\bigg[\int_M\phi\varphi^*\omega\bigg]\delta_{}(\varphi)
\end{equation}
\end{comment}
\end{exmp}
\begin{exmp}\label{freed}
This is a topological example of Chern-Simons with finite gauge group, as explained in Dijkgraaf–Witten \cite{witten} and Freed–Quinn \cite{freed}. Formally, it fits into our framework exactly as how we've discussed it, however we need to allow our cochains to be $S^1$-valued, which is exactly what one gets after exponentiating the action.
\\\\At the level of Lie algebroids, we need to consider $H^*(TX,S^1),$ as in \cref{part2}.
Geometrically, these are gerbes with flat connections. The action is given by pulling back and computing the monodromy:
\\\\
For dimension reasons, an $S^1$-valued $n$-cocycle $\Omega$ on the local pair groupoid of an $n$-dimensional manifold is cohomologically equivalent to $e^{i\Omega_0}\,,$ for some $\mathbb{R}$-valued $n$-cocycle $\Omega_0.$ Similarly, $VE_0(\Omega)$ is cohomologically equivalent to $VE_0(\Omega_0).$ We have an $S^1$ -valued ``integral" (Riemann sum) obtained by exponentiating the previously defined integral (Riemann sum) of $VE_0(\Omega_0)\;(\Omega_0).$ By \ref{main2}, the exponentiated Riemann sum is exact.
\\\\Now we will do the example. 
\\\\Let $G\rightrightarrows *$ be a finite group with classifying space $BG.$ Let $\omega$ be an $S^1$-valued $n$-cocycle on $TBG$\footnote{We don't really need to concern ourselves with infinite dimensional analysis, since the groupoid is what is fundamental in this framework.} The action of a map $f:M\to BG,$ where $M$ is closed and $n$-dimensional, is still denoted $S[f]$ and is given by pulling back and integrating $\omega,$ as described in the previous paragraph. We integrate $\omega$ to an $S^1*$-valued $n$-cocycle $\Omega$ on $\Pi_1(BG),$ with corresponding action of $f:\Delta_M\to \Pi_1(BG)$ also denoted $S[f]$ (note that, $\Pi_1(BG)$ has contractible source fibers, so we use it instead of a local pair groupoid).
\\\\The $\sigma$-algebra $\mathcal{B}$ on $\textup{Hom}(M,BG)$ is the finest sub-$\sigma$-algebra of the compact-open topology such that if $B\in\mathcal{B}$ and $f$ is homotopic to $g,$ then $f\in B\implies g\in B.$ This means that homotopy classes of maps generate $\mathcal{B},$ and there are only finitely many.
\\\\We take the measure $\mu([f])$ of a homotopy class $[f]$ to be the inverse of the number of automorphisms of a map $f\in [f],$ ie. the number of homotopy classes of maps $h:M\times S^1\to BG$ such that $h(m,\star)=f(m)$ for some marked point $*\in S^1$ and all $m\in M.$ The partition function, ie. the expectation value of $1,$  is then
\begin{equation}
    Z_M=\int_{\textup{Hom}(M,BG)}S\,d\mu=\sum_{[f]\in\textup{Hom}(M,BG)/\sim}S[[f]]\,\mu([f])\;.
\end{equation}
This is the result obtained in \cite{witten}, \cite{freed}, but we think about it a bit differently. In particular, our domains of integration are different and our cocycle data is specified differently. We can specify the cocycle data as in \cite{witten} if we allow our hom spaces to contain generalized morphisms, which our framework naturally exends to (at least when the hom spaces are principal bundles with flat connections).
\end{exmp}
\begin{comment}\begin{exmp}
This is similar to example \ref{freed}, but finite dimensional. Consider $S^1=B\mathbb{Z}$ with cocycle $a\,d\theta,$ $a\in\mathbb{R}.$ Let $M=S^1.$
\\\\Since this theory is topological the $\sigma$-algebra on $C(S^1,S^1)$ is the one generated by homotopy classes of maps, which are isomorphic to $\textup{Hom}(\pi_1(S^1),\pi_1(S^1)).$ The isotropy groups of $\Pi_1(S^1)=\mathbb{R}\ltimes S^1$ are $\mathbb{Z},$ which is amenable and carries a natural finitely additive invariant probability measure given by a F\o{}lner sequence. This leads to a finitely additive probability measure $\mu$ on $\textup{Hom}(\pi_1(S^1),\pi_1(S^1)).$
\\\\The $1$-cocycle is given by 
\begin{equation}
    \Omega(x,e^{i\theta})=ax\;.
\end{equation}
\Cref{main2} then implies that the partition function is given by
\begin{equation}
    Z_{S^1}=\int_{C(S^1,S^1)}e^{iS}\,d\mu=\lim\limits_{m\to\infty}\frac{1}{2m+1}\sum_{n=-m}^me^{2\pi ia n}\;.
\end{equation}
This is equal to $1$ if $a\in\mathbb{Z}$ and is equal to $0$ otherwise.
\end{exmp}
\end{comment}
\begin{exmp}\label{psm}
This is a slight generalization of the example appearing in Freed–Hopkins–Lurie–\\Teleman \cite{Lurie} and is a topological example.
\\\\Consider a Lie algebra $\mathfrak{g}\to *$ and a morphism $\omega:\mathfrak{g}\to\mathbb{R},$ which integrates to $\Omega:G\to \mathbb{R},$ where we assume that $G$ is simply connected and amenable (eg. compact or abelian)\footnote{For a definition of amenable, see \cite{Runde}.} whose finitely additive, invariant probability measure we denote by $dg.$ We want to consider the partition function of $S^1.$
\\\\Any morphism $f:TS^1\to\mathfrak{g}$ integrates to a unique morphism 
\begin{equation}
    \Pi_1(S^1)\to G,
    \end{equation}
and we can pull back the integrated morphism to a morphism $F\in\textup{Hom}(\Delta_{S^1},G),$ for any triangulation of $S^1.$ Then by \cref{main2}, similarly to the previous example,
\begin{equation}\label{lur}
   \sum_{\Delta\in\Delta_{S^1}}F^*\Omega= \int_{S^1}f^*\omega\;.
\end{equation}
Now, the set of homotopy classes of maps $TS^1\to \mathfrak{g}$ is naturally identified with $G,$ and using this identification we have that
\begin{equation}
    \int_{S^1}f^*\omega=\Omega(g)\;.
\end{equation}
Hence, the partition function (ie. the expectation value of $1$) is
\begin{equation}
    \int_G e^{i\Omega(g)}\,dg\;.
\end{equation}
This is equal to $1$ if $\,\Omega=0$ and is $0$ otherwise. 
\end{exmp}
\subsubsection{Poisson Sigma Model Examples}
Consider the Poisson sigma model, with domain the disk with three marked points on the boundary, denoted $0,1,\infty.$ The target space of this sigma model is a Poisson manifold $(M,\Pi)$ and there are two fields, given by a vector bundle morphism 
\begin{equation*}
    (X,\eta):TD\to T^*M
\end{equation*}
which on $\partial D$ takes values in the zero section. The action, defined in Schaller–Strobl \cite{strobl}, is given by 
\begin{equation}\label{action}
    S[X,\eta]=\int_{D} \eta\wedge dX+\frac{1}{2}\Pi\vert_X(\eta\,,\eta)\,.
\end{equation}
In Cattaneo–Felder \cite{catt} it is argued that Kontsevich's star product \cite{kontsevich} is equal to the perturbative expansion around the trivial classical solution of the following path integral:
\begin{equation}\label{star}
 (f\star g)(m)=\int_{X(\infty)=m} f(X(1))g(X(0))\,e^{\frac{i}{h}S[X,\eta]}\,\mathcal{D}X\,\mathcal{D}\eta\;,
\end{equation}
normalized so that $1\star 1=1.$ Furthermore, it was shown in Bonechi–Cattaneo–Zabzine \cite{bon} that (formally) this integral is equal to
\begin{align}\label{mystarr}
    (f\star g)(m)=\int_{\{X\in\textup{Hom}(TD, T^*M):\pi(X(\infty))=m\}}f(\pi(X(1)))\,g(\pi(X(0)))\,e^{\frac{i}{\hbar}S[X]}\,\mathcal{D}X\,,
\end{align}
where $\pi:T^*M\to M$ is the projection and the integral is over Lie algebroid morphisms. The action is given by
\begin{equation}\label{actp}
S[X]=\int_D X^*\Pi\,.
\end{equation}
The theory with action \ref{actp} fits into our framework and is topological. More generally, we can consider this action for any surface.
\\\\It was shown by the author in \cite{Lackman4} that $\ref{mystarr}$ is the twisted convolution algebra of the canonical geometric quantization of the Lie 2-groupoid $\Pi_2(\mathfrak{g}).$ This Lie 2-groupoid was studied by Zhu in \cite{zhuc}.
\begin{exmp}
In the context of example \ref{psm}, let's consider conventional quantum mechanics on  $T^*\mathbb{R}\cong\mathbb{R}^2,$ with symplectic structure $\omega=dp\wedge dq.$ We will use triangulations of $D$ where the marked points $0,1,\infty$ are vertices.
\\\\Let's consider the simplest triangulation, where we identify the disk with the standard 2-simplex $|\Delta^2|,$ and denote the trivial triangulation by $\Delta^2.$ The groupoid is $\textup{Pair}\,\mathbb{R}^2,$ and the cocycle is 
\begin{equation}
    \Omega(p,q,p_1,q_1,p_2,q_2)=\begin{vmatrix} p_{1}-p & p_2-p \\ q_{1}-q & q_{2}-q \end{vmatrix}\;,
\end{equation}
ie. $VE_0(\Omega)=\omega$ (in fact, with the standard metric $VE_{\infty}(\Omega)=\omega).$ We want to approximate $(f\star g)(p,q),$ so let's fix a $(p,q).$ The space we want to integrate over is 
\begin{equation}
    \{\mathbf{x}\in\textup{Hom}(\Delta^2,\textup{Pair}\,\mathbb{R}^2):\mathbf{x}(\infty)=(p,q)\}\;,
    \end{equation}
    which is isomophic to
\begin{equation}
    \{(p,q,p_1,q_1,p_2,q_2):(p_1,q_1,p_2,q_2)\in\mathbb{R}^4\};.
    \end{equation}
We have a canonical measure on this space induced by $\omega.$  After normalizing so that $1\star 1=1,$ the resulting integral is
\begin{align}\label{moyal}
(f\star g)(p,q)=\frac{1}{(2\pi\hbar)^2}\int_{\mathbb{R}^4}f(p_2,q_2)g(p_1,q_1)e^{\frac{i}{\hbar}\Omega(p,q,p_1,q_1,p_2,q_2)}\,dp_1\,dq_1\,dp_2\,dq_2\;. 
\end{align}
This approximation is exact, ie. \ref{moyal} is the non-perturbative form of the Moyal product, see \cite{baker}, also \cite{Zachos}. It is associative and satisfies 
\begin{equation}
    f\star g-g\star f=i\hbar\{f,g\}+\mathcal{O}(\hbar^2)\;.
\end{equation}
It provides a strict deformation quantization of $T^*\mathbb{R}^2,$ in Rieffel's sense, see \cref{strict}. The operator assignment \begin{equation}
    f\mapsto \hat{f}\,,\;\hat{f}g:=f\star g\;,
    \end{equation}
for $g\in L^2(T^*\mathbb{R},\omega),$ is essentially the non-perturbative form of the Weyl quantization of $f.$
\\\\One can get this product from geometric quantization as well, but to do so  requires a connection, polarization and the Fourier transform, see eg. \cite{eli}.
\end{exmp}
\begin{remark}
One can show that \ref{est} converges to \ref{moyal} when one takes the limit over triangulations with an even number of vertices between the marked points, however we believe it is more natural to shrink the local groupoid with the triangulations. It is not known what happens in this case.
\end{remark}
\begin{exmp}\label{tang}
 Similar to the previous example, one can show that the approximation is exact,\footnote{The previous remark still applies here.} and thus gives strict deformation quantizations (which we discuss in \cref{deform}), in the following cases:
 \begin{enumerate}
     \item  $0$-Poisson structure on a manifold,
     \item the dual of the Lie algebra of the Heisenberg group,
     \item  symplectic manifolds whose universal cover is $T^*\mathbb{R}^n,$ 
     \item Poisson structures on manifolds obtained by an action of $\mathbb{R}^n\rightrightarrows *$ with the constant Poisson structure, see Rieffel \cite{reiffelact},
     \item Poisson structures on manifolds obtained by an action of the two dimensional affine group with its invariant Poisson structure, see Bieliavsky–Bonneau–Maeda\cite{pierre}.
 \end{enumerate}     
To explain examples $4$ and $5,$ let $\Pi$ be an invariant Poisson structure on a Lie group $G\rightrightarrows *.$ Then given an action of $G$ on a manifold $M,$ we get a morphism $\mathfrak{g}\to TM$ and we can use this morphism to push forward the Poisson structure to $M.$
\\\\Example $2$ was computed by the author in \cite{Lackman4}. The result is
\begin{align}\label{hei}
    &(f\star g)(x,y,z)=\nonumber
    \\&\frac{1}{(2\pi z\hbar)^2}\int_{\mathbb{R}^2\times\mathbb{R}^2}f(x'',y'',z)g(x',y',z)e^{\frac{i}{z\hbar}((x''-x)(y'-y)-(x'-x)(y''-y))}\,dx''\,dy''\,dx'\,dy'\;,
\end{align}
with $(f\star g)(x,y,0)=f(x,y,0)g(x,y,0).$ This product is equal to the product Reiffel constructed on the dual of nilpotent Lie algebras in \cite{rieffel2}, in this special case. For fixed $z$ this is the Moyal product. 
\\\\For example $4,$ consider an action of $\mathbb{R}^n$ on a manifold $M,$ denoted $m\mapsto x\cdot m.$ A constant Poisson structure on $\mathbb{R}^n$ is determined by a skew-symmetric matrix $J$ on $\mathbb{R}^n.$ One can show that applying our framework gives Rieffel's universal deformation formula:
\begin{equation}
(f\star g)(m)=\frac{1}{(2\pi\hbar)^n}\int_{\mathbb{R}^n\times \mathbb{R}^n}f(x\cdot m)g(Jy\cdot m)e^{\frac{i}{\hbar}x\cdot y}\,d^nx\,d^ny\;.
\end{equation}
A similar formula holds for example $5.$ It would be interesting to determine if other known examples of tangential, strict deformation quantizations\footnote{See \cref{deform}.} of Poisson manifolds have exact approximations, eg. \cite{bone}.
\\\\In general, there are canonical $2$-cocycles on the symplectic groupoid for the duals of exponential Lie algebras (ie. where the exponential map is a diffeomorphism) and the duals of semisimple Lie algebras; see the examples section in \cite{Lackman4}.
\end{exmp}
\subsection{*Non-Perturbative Deformation Quantizations}\label{deform}
Observe that, the star product in \ref{mystarr} is tangential, meaning that it pulls back to a star product on symplectic leaves, see eg. \cite{wein}, \cite{amar}, \cite{cahen}, \cite{gamme}. This is because a Lie algebroid morphism remains in a single orbit. Therefore, if $f_1,g_1, f_2,g_2$ are such that $f_1=f_2,\,g_1=g_2$ on a symplectic leaf $L,$ then $f_1\star g_1=f_2\star g_2$ on $L$ as well. 
\\\\There are Poisson structures which are known to not have tangential deformation quantizations, so it would seem that \ref{mystarr} can't be equal to \ref{star} in full generality. However, it is true that regular Poisson structures always have tangential deformation quantizations, so as far as the author is aware it is consistent with the literature that these are equal for regular Poisson structures, see Masmoudi.\cite{Masmoudi}. The dual of the Lie algebra of the Heisenberg group is an example of a Poisson structure which isn't regular that has a tangential quantization.
\\\\We conjecture that there is a cocycle for which the approximation is exact for any tangential and strict deformation quantization (say, 
 a cocycle obtained using $VE_{\infty}),$ though we don't expect that there are many of these beyond the ones we listed in example \ref{tang}. All of the examples we listed have the property that the universal cover of each symplectic leaf is $T^*\mathbb{R}^n.$ 
 \\\\We recall Rieffel's definition of strict deformation quantization \cite{rieffel}:
 \begin{definition}\label{strict}
Let $(M,\Pi)$ be a Poisson manifold. A strict deformation quantization of $(M,\Pi)$ is, for each $\hbar\in\mathbb{R}\,,$ a $C^*$-algebra containing $C^{\infty}_c(M)$ as a dense $^{*_{\hbar}}$-subalgebra, denoted 
\vspace{0.05cm}\\
\begin{equation*}
    (C^{\infty}_c(M),\,\star_{\hbar}, \,^{*_{\hbar}},\,||\cdot ||_{\hbar})\,,
    \end{equation*}
such that
\begin{enumerate}
    \item For each $f\in C^{\infty}_c(M)\,,$ the map $\hbar\to ||f||_{\hbar}$ is continuous.
    \item For $\hbar=0$ the $C^*$-algebra is the natural one, ie. $(C^{\infty}_c(M),\cdot, ^*,||\cdot ||_{L^{\infty}(M)})\,,$
    where the product is multiplication and $^*$ denotes complex conjugation.
    \item $||(f\star_{\hbar}g-g\star_{\hbar}f)/\hbar-i\{f,g\}||_{\hbar}\xrightarrow[]{\hbar\to 0}0\,.$
\end{enumerate}
\end{definition}
We conjecture the following: 
\begin{conjecture}
Let $(M,\omega)$ be a symplectic manifold. Then there is a strict deformation quantization of $C^{\infty}_c(M),$ whose underlying $C^*$-algebra is a subalgebra of bounded linear operators on $L^2(M,\omega),$ if and only if its universal cover is $T^*\mathbb{R}^n.$ 
\end{conjecture}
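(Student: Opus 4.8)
The plan is to prove the two implications separately. The forward implication can be assembled from the machinery already developed in this paper together with Rieffel's deformation theory; the reverse implication is where essentially all of the difficulty lies, and I expect a complete argument to require genuinely new input, so below I only outline a plausible line of attack.

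\textbf{The ``if'' direction.} Suppose the universal cover of $(M,\omega)$ is symplectomorphic to $(T^*\mathbb{R}^n,\omega_{\mathrm{can}})=(\mathbb{R}^{2n},\omega_{\mathrm{std}})$, with deck group $\Gamma=\pi_1(M)$. By Example~\ref{tang}(3), the groupoid construction applied to the symplectic groupoid and the cocycle $VE_0^{-1}(\omega)$ already yields, upstairs, the (non-perturbative) Moyal product of \eqref{moyal}, which is an associative strict deformation quantization of $(\mathbb{R}^{2n},\omega_{\mathrm{std}})$; its left-regular representation $f\mapsto(g\mapsto f\star_\hbar g)$ is unitarily equivalent, via the Weyl transform, to the action of $\mathcal K(L^2(\mathbb{R}^n))$ on its Hilbert--Schmidt ideal $L^2(\mathbb{R}^{2n},\omega)\cong L^2(\mathbb{R}^n)\otimes\overline{L^2(\mathbb{R}^n)}$, so it is faithful and by bounded operators, continuously in $\hbar$ and reducing to multiplication operators at $\hbar=0$. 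Invariance of the Moyal product under affine symplectic maps lets $\star_\hbar$ descend to $\Gamma$-invariant functions, i.e.\ to $C^\infty_c(M)$, as an oscillatory integral exactly in the manner of Rieffel's universal deformation formula (cf.\ Example~\ref{tang}(4),(5)); amenability of $\Gamma$ — which holds in the cases of interest, tori and symplectic nilmanifolds, and is in the spirit of the amenability hypotheses used elsewhere in this paper — guarantees that the descended algebra is still a $C^*$-algebra faithfully and boundedly represented on $L^2(M,\omega)=L^2(\mathbb{R}^{2n},\omega)^{\Gamma}$, so Definition~\ref{strict} is met. The one geometric point to be isolated as a lemma is that a symplectic manifold with universal cover $(\mathbb{R}^{2n},\omega_{\mathrm{std}})$ admits a presentation in which $\Gamma$ acts by affine symplectic transformations; for tori, $T^*T^n$, and Kodaira--Thurston-type symplectic nilmanifolds this is classical.

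\textbf{The ``only if'' direction.} Assume a strict deformation quantization $(C^\infty_c(M),\star_\hbar,{}^{*_{\hbar}},\|\cdot\|_\hbar)$ whose completions $\mathcal A_\hbar$ sit in $B(L^2(M,\omega))$, continuously in $\hbar$, with $\mathcal A_0$ the multiplication operators. The strategy is: (a) use trace/index constraints to force $M$ to be symplectically aspherical with contractible universal cover; (b) transport the representation to $L^2(\widetilde M,\widetilde\omega)$ and extract from it a global symplectic normal form, forcing $\widetilde M\cong(\mathbb{R}^{2n},\omega_{\mathrm{std}})$. For (a): integration against $\omega^{\wedge n}/n!$ is a trace-like functional which, by the $\hbar=0$ normalization and continuity, deforms to a faithful semifinite trace $\tau_\hbar$ implemented by the $L^2(M,\omega)$-representation; pairing $\tau_\hbar$ with $K_0(\mathcal A_\hbar)$ and invoking the algebraic index theorem, its range is $\hbar^{-1}$ times the group of periods of $[\omega]$, and the demand that the relevant $K$-classes be represented by honest projections on the \emph{fixed} infinite-dimensional Hilbert space $L^2(M,\omega)$, compatibly as $\hbar\to 0$, excludes the ``rigid'' quantizations (the finite-dimensional Toeplitz/fuzzy-sphere algebras for integral $\hbar$) — this is precisely the obstruction visible for $S^2$ — and should yield $[\omega]|_{\pi_2(M)}=0$ and, with further analysis of the topology of $\mathcal A_\hbar$, contractibility of $\widetilde M$. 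For (b): pulling the continuous field back to $\widetilde M$ gives operators $\widehat f_\hbar$ on $L^2(\widetilde M,\widetilde\omega)$ with $[\widehat f_\hbar,\widehat g_\hbar]/(i\hbar)\to\widehat{\{f,g\}}$ built from \emph{all} of $C^\infty_c(\widetilde M)$ acting on the full $L^2$; the claim is that such a representation cannot be ``polarized away'', so it must arise from a genuine global Weyl calculus, which in turn produces a Lagrangian fibration $\widetilde M\to\mathbb{R}^n$ with contractible fibers and hence an identification with $T^*\mathbb{R}^n$. I expect step (b) — converting the analytic/representation-theoretic data back into a symplectic rigidity statement — to be the main obstacle, and it is plausible that finishing it requires a new rigidity theorem for strict deformation quantizations rather than only the tools assembled here; the tangential nature of the framework's star products (Section~\ref{deform}) is, for symplectic $M$, vacuous and so is not by itself enough to drive this direction.
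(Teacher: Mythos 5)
This statement is labelled a \emph{conjecture} in the paper: the author offers no proof, only two consistency checks (Weinstein's observation that his geometric quantization scheme seems to require the symplectic groupoid to be diffeomorphic to $T^*M$, and Rieffel's no-go theorem for $SO(3)$-equivariant strict deformation quantization of $S^2$). So there is no argument in the paper to compare yours against, and your proposal should be judged as a standalone attempt at an open problem rather than as a reconstruction of a known proof.

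As such an attempt it has genuine gaps in both directions. In the ``if'' direction, the lemma you isolate — that a symplectic manifold whose universal cover is $(\mathbb{R}^{2n},\omega_{\mathrm{std}})$ admits a presentation in which $\pi_1(M)$ acts by affine symplectic transformations — is false in the generality you need it. A closed hyperbolic surface of genus $\ge 2$ with its area form has universal cover symplectomorphic to $(\mathbb{R}^2,dx\wedge dy)$ (the hyperbolic plane has infinite area), but the deck group is a non-amenable surface group acting by hyperbolic isometries, which is not conjugate to an affine symplectic action; both the affineness and the amenability hypotheses on which your descent of the Moyal product rests therefore fail for a case the conjecture is supposed to cover. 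Your argument really only treats tori and nilmanifold-type quotients, i.e.\ it proves a much weaker statement than the ``if'' direction. In the ``only if'' direction you candidly flag that step (b) — converting the representation-theoretic data into the symplectic rigidity statement $\widetilde M\cong T^*\mathbb{R}^n$ — is missing, and step (a) as written is also not a proof: the claim that the trace pairing with $K_0(\mathcal{A}_\hbar)$ together with representability on a fixed Hilbert space forces $[\omega]\vert_{\pi_2(M)}=0$ and contractibility of $\widetilde M$ is an expectation, not an argument (the algebraic index theorem computes the pairing for formal deformations, and passing from that to constraints on honest projections in a continuous field of $C^*$-algebras is exactly the hard analytic content). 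In short, the proposal is a reasonable research plan but not a proof, which is consistent with the statement's status in the paper as a conjecture.
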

Indeed, this is in a similar vein to an observation made by Weinstein in \cite{weinstein}, which is that in order for his geometric quantization scheme to work for a Poisson manifold $(M,\Pi)$ it seems necessary that the symplectic groupoid is diffeomorphic to $T^*M.$ Specializing this to the case of symplectic manifolds, his observation implies that the universal cover of $M$ should be contractible.
\\\\This conjecture is also consistent with a no-go theorem proved by Rieffel, which says that there is no $SO(3)$-equivariant, strict deformation quantization of $(S^2,\omega)$ with its standard symplectic structure, see point 14 in \cite{rieffel3}. This means that if there is one, it has to be anomalous. This sheds some doubt on the existence of an associative and non-perturbative form of Kontsevich's star product on $(S^2,\omega).$ 
\\\\Given this information, one might wonder what extra structure a non-perturbative definition of \ref{star} would have. We can relax Rieffel's definition of strict deformation quantization in such a way that we still get a deformation quantization and a $C^*$-algebra. This can be done by not requiring $C^{\infty}_c(M)$ to be a subalgebra in \cref{strict}, but strengthening the third condition so that it looks like a subalgebra perturbatively:
\begin{definition}
Let $(M,\Pi)$ be a Poisson manifold. For functions $f:M\to\mathbb{C}$ such that 
\begin{equation}
    f\vert_{\mathcal{L}}\in L^2(\mathcal{L},\Pi\vert_{\mathcal{L}}^{-1})
    \end{equation}
for all symplectic leaves $\mathcal{L},$ let
\begin{equation}
    ||f||:=\text{sup}_{\mathcal{L}}||f||_{\mathcal{L}}\;,
\end{equation}
where $||f||_{\mathcal{L}}$ is the $L^2$ norm of $f\vert_{\mathcal{L}}$ over the symplectic leaf. Denote the space of functions with bounded norm by $L^2(M,\Pi).$
\end{definition}
\begin{definition}
A non-perturbative deformation quantization of a Poisson manifold $(M,\Pi)$ is given by a pair $(\star_{\hbar}, \bullet_{\hbar}),$ where $\star_{\hbar}$ is a star product on $C_c^{\infty}(M)$ and where
\begin{equation}
    \bullet_{\hbar}:L^2(M,\Pi)\times L^2(M,\Pi)\to L^2(M,\Pi)
\end{equation}
is a continuous bilinear map. We get an operator assignment given by $f\mapsto\hat{f},\;\hat{f}g:=f\bullet_{\hbar}g.$ We require that:
\begin{enumerate}
\item $\overline{f\bullet_{\hbar} g}=\bar{g}\bullet_{\hbar} \bar{f},$ where $\;\bar{}\;$ is the complex conjugate,
    \item For $f,g\in C^{\infty}_c(M),$
    \begin{equation}
        \frac{1}{\hbar^n}||\widehat{f\bullet_{\hbar}g} -\widehat{T_n[f\star_{\hbar}g]}\,||\xrightarrow[]{\hbar\to 0} 0
    \end{equation}
    for all $n\ge 0,$ where $T_n$ truncates the formal power series at degree $n$ (ie. we use the operator norm).
\end{enumerate}
\end{definition}
The above definition can be weakened. In particular, in condition $2$ we can instead only require that
\begin{equation}
    \frac{1}{\hbar^n}||f\bullet_{\hbar}g -T_n[f\star_{\hbar}g]\,||\xrightarrow[]{\hbar\to 0} 0\;.
\end{equation}
\begin{comment}\begin{proposition}
Let $\Delta_M$ be a triangulation of $M$ with vertices $m_1,\ldots,m_n,$ and let $G\rightrightarrows X$ be a (local) Lie groupoid.
Then the precomposition map
\begin{equation}
    \textup{Hom}(\Pi_{1_{\{m_1,\ldots,m_n\}}}(M),G)\to \textup{Hom}(\Delta_M,G)
\end{equation}
is a bijection.
\end{proposition}
\end{comment}
\begin{comment}
    \hline
  measure on orbits & measure on orbits\\
  \hline
  $f\mapsto \pi^*f+\lambda\, df\circ\alpha$ & $f\mapsto (1-\lambda)s^*f+\lambda\, t^*f$\\
\end{comment}
\subsection{Lie Algebroid:Lie Groupoid Dictionary}\label{corr}
Here we include a \textit{rough} correspondence of Lie algebroid$\Longleftrightarrow$ Lie Groupoid data.
\begin{center}
\begin{tabular}{ | m{5cm} | m{7cm}| } 
  \hline
  Lie Algebroids & Lie Groupoids\\ 
  \hline
  $\mathfrak{g}\to X$& (local) Lie groupoid $G\rightrightarrows X$   \\ 
  \hline
  morphisms $\mathbf{x}:TM\to\mathfrak{g}$ & morphisms $\{\mathbf{x}:\Delta_M\to G\}_{\Delta_M\in \mathcal{T}_M}$\\
  \hline
  metric, symplectic form, etc. & measures on spaces $\{\mathbf{x}:\Delta_M\to G\}_{\Delta_M\in \mathcal{T}_M}$\\
  \hline
  $\mathfrak{g}^{\oplus n}\to X$ & $G^{(n)}\xrightarrow[]{s}X$\\
  \hline
 (closed) n-form & (closed) completely antisymmetric $n$-cochain \\ 
  \hline
    (completely) symmetric n-form & normalized (completely) symmetric n-cochain\\
  \hline
  Riemannian metric & positive, normalized symmetric 2-cochain\\
  \hline
  Poisson manifold & symplectic groupoid\\

  \hline
\end{tabular}
\end{center}
 \begin{appendices}\label{appen}
 \section{Basic Theory of Lie Groupoids and Lie Algebroids}
In this section we will begin by describing Lie groupoids and Lie algebroids and we will give some important examples. For some lecture notes, see \cite{eck}. For more about simplicial manifolds, their cohomology, etc. see Deligne \cite{Deligne}.
\subsection{Lie Groupoids and Lie Algebroids}
\begin{definition}
A groupoid is a category $G\rightrightarrows X$ for which the objects $X$ and arrows $G$ are sets and for which every morphism is invertible. Notaionally,, we have two sets $X, G$ with structure maps of the following form:
\begin{align*}
    & s,t:G\to X\,,
   \\ & \textup{id}:X\to G\,,
    \\ & \cdot:G\sideset{_t}{_{s}}{\mathop{\times}} G\to G\,,
    \\& ^{-1}:G\to G\,.
\end{align*}
Here $s,t$ are the source and target maps, $\textup{id}$ is the identity bisection (ie. $X$ can be thought of as the set of identity arrows inside $G$), $\cdot$ is the multiplication, denoted $(g_1,g_2)\mapsto g_1\cdot g_2,$ and $^{-1}$ is the inversion map. We will frequently identify a point $x\in X$ with its image in $G$ under $\textup{id}$ and write $x\in G.$  
\\\\A Lie groupoid is a groupoid $G\rightrightarrows X$ such that $G, X$ are smooth manifolds, such that all structure maps are smooth and such that the source and target maps submersions.
\end{definition}
For brevity, we will sometimes denote a (Lie) groupoid $G\rightrightarrows X$ exclusively by its space of arrows $G.$
\begin{definition}
A morphism of groupoids $G\to H$ is a functor between them, ie. a function which is compatible with the multiplcations. A morphism of Lie groupoids is a functor which is smooth.
\end{definition}
\begin{exmp}
Any Lie group $G$ is a Lie groupoid $G\rightrightarrows \{e\}$ over the manifold containing only the identity $e\in G.$
\end{exmp}
The following example is the one most relevant to Brownian motion:
\begin{exmp}\label{pair}
 Let $X$ be a manifold. The pair groupoid, denoted 
 \begin{equation*}
     \textup{Pair}(X)\rightrightarrows X\,,
      \end{equation*}
    is the Lie groupoid whose objects are the points in $X$ and whose arrows are the points in $X\times X.$ An arrow $(x,y)$ has source and target $x,y,$ respectively. Composition is given by $(x,y)\cdot(y,z)=(x,z),$ the identity bisection is $\textup{id}(x)=(x,x)$ and the inversion is $(x,y)^{-1}=(y,x).$
\end{exmp}
\begin{exmp}\label{fundg}
Let $M$ be a manifold. The fundamental groupoid, denoted $\Pi_1(M)\rightrightarrows M,$ is a Lie groupoid whose arrows between two objects $m_1,m_2$ are homotopy classes of smooth maps starting at $m_1$ and ending at $m_2,$ with composition given by concatenation. The space of arrows is naturally identified with $\tilde{M}\times\tilde{M}/\pi_1(M),$ where $\tilde{M}$ is the universal cover of $M$ and $\pi_1(M)$ acts by the diagonal action. This groupoid integrates $TM$ and has simply connected source fibers. Its local groupoid is isomorphic to the local pair groupoid of $M.$
\end{exmp}
\begin{exmp}
Let $G$ be a Lie group acting on a manifold $M.$ We obtain a Lie groupoid
\begin{equation}
    G\ltimes M\rightrightarrows M,
\end{equation}
called the action groupoid. The source and target of $(g,m)$ are $m,\,g\cdot m,$ respectively. The composition is given by $(g_1,m)\cdot(g_2,g_1\cdot m)=(g_2g_1, m).$
\end{exmp}
The infinitesimal counterpart of a Lie groupoid is a Lie algebroid.
\begin{definition}
A Lie algebroid is a triple ($\mathfrak{g},[\cdot,\cdot],\alpha)$ consisting of 
\begin{enumerate}
    \item A vector bundle $\pi:\mathfrak{g}\to X\,,$
    \item A vector bundle map (called the anchor map) $\alpha:\mathfrak{g}\to TX\,,$
    \item A Lie bracket $[\cdot,\cdot]$ on the space of sections $\Gamma(\mathfrak{g})$ of $\pi:\mathfrak{g}\to X,$
\end{enumerate}
such that for all smooth functions $f$ and all $\xi_1,\xi_2\in \Gamma(\mathfrak{g})$ the following Leibniz rule holds: $[\xi_1,f\xi_2]=(\alpha(\xi_1)f)\xi_2+f[\xi_1,\xi_2]\,.$
\end{definition}
\begin{exmp}
Any Lie algebra $\mathfrak{g}$ is a Lie algebroid $\mathfrak{g}\to \{0\}$ over the manifold containing only $0\in \mathfrak{g}.$
\end{exmp}
The following example is the one relevant to Brownian motion.
\begin{exmp}
Let $X$ be a manifold. The tangent bundle $TX\to X$ is a Lie algebroid, where the anchor map $\alpha$ is the identity. Sections in $\Gamma(TX)$ are just vector fields and the Lie bracket is Lie bracket of vector fields.
\\\\More generally, the vectors tangent to the leaves of a foliation of a manifold $X$ form a Lie algebroid, with the Lie bracket being the Lie bracket of vector fields.
\end{exmp}
\begin{exmp}
Let $(M,\Pi)$ be a Poisson manifold. Then there is a natural Lie algebroid structure on $T^*M\to M,$ with anchor map given by contraction with $\Pi$ ($\,\Pi$ is a bivector). The Lie bracket is determined by $[df,dg]:=d\,\Pi(df,dg).$ If $\Pi$ is symplectic then this Lie algebroid is isomorphic to $TM.$
\end{exmp}
Morphisms of Lie algebroids are slightly cumbersome to define, however we won't really be needing them in this paper. By Lie's second theorem, they are vector bundle maps which are obtained by Lie groupoid morphisms via differentiation. We give some examples:
\begin{exmp}
Let $M, X$ be manifolds. A morphism $TM\to TX$ is the same as the differential of a smooth map $M\to X.$
\end{exmp}
Of course, in the special case of Lie algebras we get the expected result:
\begin{exmp}
If $\mathfrak{g}, \mathfrak{h}$ are Lie algebras, then $\mathfrak{g}\to\mathfrak{h}$ is a morphism of Lie algebroids if and only if it is a morphism of Lie algebras.
\end{exmp}
One of the most important classes of Lie algebroid morphisms are the following:
\begin{exmp}
Let $M$ be a one dimensional manifold and let $\mathfrak{g}\to X$ be a Lie algebroid. Given a vector bundle map $f:TM\to\mathfrak{g},$ let $f\vert_{M}:M\to X$ be the induced map on the base. Then $f:TM\to\mathfrak{g}$ is a Lie algebroid morphism if and only if $df\vert_M:TM\to TX$ is equal to $\alpha\circ f.$ If $M=[0,1],$ such a path is called an algebroid path.
\end{exmp}
\subsection{The van Est Map}
We now state the definition of the van Est map given by Weinstein–Xu in \cite{weinstein1}. The description of the nerve that they use is 
\begin{equation}
    G^{(n)}=\underbrace{G\sideset{_t}{_{s}}{\mathop{\times}} G \sideset{_t}{_{s}}{\mathop{\times}} \cdots\sideset{_t}{_{s}}{\mathop{\times}} G}_{n \text{ times}}\;.
\end{equation}
Let $G\rightrightarrows X$ be a Lie groupoid. Given $X\in\Gamma(\mathfrak{g})\,,$ we can left translate it to a vector field $L_X$ on $G^{(1)}\,.$  Now suppose we have an $n$-cochain $\Omega,$ $n\ge 1\,.$ We get an $(n-1)$-cochain $L_X\Omega$ by defining
\begin{equation}
    L_X\Omega(g_1,\ldots, g_{n-1})=L_X\Omega(g_1,\ldots, g_{n-1},\cdot)\vert_{t(g_{n-1})}\,,
\end{equation}
ie. we differentiate it in the last component and evaluate it at the identity $t(g_{n-1})\,.$ 
\\\\The following definition uses sections of $\mathfrak{g},$ so one must check that it defines an $n$-form:
\begin{definition}\label{vanest}
Let $\Omega$ be a normalized $n$-cochain and let $X_1,\ldots X_n\in\Gamma(\mathfrak{g})\,.$ We define
\begin{align}
    VE(\Omega)(X_1,\ldots, X_n)=\sum_{\sigma\in S_n} \textup{sgn}(\sigma)L_{X_{\sigma(1)}}\cdots L_{X_{\sigma(n)}}\Omega\;.
\end{align} 
\end{definition}
\subsection{Lie's Theorems for Groupoids}
Generalizing the case of Lie groups, a Lie algebroid $\mathfrak{g}\to X$ is associated to every Lie groupoid $G\rightrightarrows X.$ The construction is formally the same as in the case of Lie groups: the underlying vector bundle $\mathfrak{g}\to X$ is given by the normal bundle of $\textup{id}(X)\subset G\,.$ This vector bundle can be identified with $\textup{id}^*\textup{ker}(ds)\,,$ ie. vectors tangent to the source fibers at the identity bisection. The anchor map is given by $dt$ and the Lie bracket is obtained in the same way it is obtained on Lie groups: by left translating sections in $\Gamma(\mathfrak{g})$ to vector fields in $TG$ tangent to the source fibers (via the derivative of the multiplication map), and evaluating the Lie bracket of vector fields at the identity bisection. 
\\\\Lie's second theorem holds for Lie groupoids, ie. given a morphism $f:G\to H$ we can differentiate it to get a corresponding morphism $df:\mathfrak{g}\to\mathfrak{h},$ and this correspondence is a bijection. However, Lie's third theorem fails: not every Lie algebroid integrates to a Lie groupoid, though they often do in practice, and they always do locally, which is enough for the constructions we will be making.
\\\\Despite the faiulre of Lie's third theorem, a Lie algebroid $\mathfrak{g}\to X$ can always be integrated to a topological groupoid which is smooth in a neighborhood of the identity bisection.\footnote{Though it may fail to be globally Hausdorff.} The idea is similar to the construction of the universal cover of a manifold: given a Lie algebroid $\mathfrak{g}\to X,$ its canonical topological integration $\Pi_1(\mathfrak{g})$ has arrows which are algebroid paths up to homotopy; these can be composed in a natural way. We will make this precise with the next proposition. 
\\\\The details of this section, including the following proposition, can be found in Crainic–Fernandes \cite{rui}:
\begin{proposition}
Let $G\rightrightarrows X$ be a Lie groupoid with simply connected source fibers, and let $\mathfrak{g}\to X$ be its Lie algebroid. We have a natural bijection 
\begin{equation}\label{path}
    G\to \textup{hom}(T[0,1],\mathfrak{g})/\sim
\end{equation}
defined as follows (here $\sim$ identifies two algebroid paths if they are homotopic relative to the endpoints):
for each $g\in G$ choose a path 
\begin{equation}
    \gamma:[0,1]\to s^{-1}(s(g))
    \end{equation}
such that $\gamma(0)=s(g),\,\gamma(1)=g.$ The map in \ref{path} is given by differentiating $\gamma$ and left translating the vectors in the image to the identity bisection, ie. for a vector $V\in T[0,1]$ at the point $t\in [0,1],$ 
\begin{equation}
    V\mapsto (\gamma(t))^{-1}\cdot d\gamma(V)\;.
\end{equation}
\end{proposition}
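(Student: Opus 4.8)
The plan is to verify that the stated assignment is well-defined and is a bijection by exhibiting an explicit inverse, using only the standard facts about source-simply-connected Lie groupoids and their Lie algebroids recalled in Crainic–Fernandes \cite{rui}. First I would check that the map is well-defined on homotopy classes: given $g\in G$, if $\gamma,\gamma':[0,1]\to s^{-1}(s(g))$ are two paths from $s(g)$ to $g$, then because the source fiber $s^{-1}(s(g))$ is simply connected they are homotopic rel endpoints inside the fiber; left translating this homotopy to the identity bisection, i.e. applying $t\mapsto (\gamma(t))^{-1}\cdot d\gamma(V)$ pointwise, produces a homotopy of algebroid paths rel endpoints, so the class in $\textup{hom}(T[0,1],\mathfrak{g})/\sim$ does not depend on the choice of $\gamma$. (One must also note that the differentiated, left-translated path is genuinely an algebroid path, i.e. a Lie algebroid morphism $T[0,1]\to\mathfrak{g}$ — this is the one-dimensional criterion recalled in the excerpt, that $\tfrac{d}{dt}(s\circ\gamma) = \alpha\circ(\text{the translated derivative})$, which holds because $s\circ\gamma$ is constant and the anchor of the translated vector is $dt$ applied to a source-fiber vector, hence zero; more generally one uses $t\circ\gamma$ if one left translates differently, but the source-fiber version makes the anchor condition immediate.)

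Next I would construct the inverse. Given an algebroid path $a:T[0,1]\to\mathfrak{g}$ with base path $a|:[0,1]\to X$, lift it to a path $\tilde\gamma:[0,1]\to G$ in the source fiber $s^{-1}(a|(0))$ by solving the ODE $\tilde\gamma(0) = \textup{id}(a|(0))$, $\dot{\tilde\gamma}(t) = (\tilde\gamma(t))\cdot a(t)$, i.e. left translating the time-dependent section defined by $a$ to a time-dependent vector field on $G$ tangent to the source fiber and taking the integral curve; this solution exists and is unique on all of $[0,1]$ because $s$-fibers are the leaves on which these left-invariant vector fields are complete (again \cite{rui}). Then send $a$ to $\tilde\gamma(1)\in s^{-1}(a|(0))\subset G$. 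I would check this descends to homotopy classes — a homotopy of algebroid paths rel endpoints integrates to a homotopy of the lifts $\tilde\gamma$ rel endpoints in the relevant source fibers (this is exactly the statement that the monodromy of the $A$-path is trivial, and here there is no monodromy obstruction precisely because we are landing in $G$ with simply connected source fibers), so the endpoint $\tilde\gamma(1)$ is unchanged. Finally I would verify the two composites are the identity: starting from $g$, choosing $\gamma$, differentiating-and-translating to get $a$, then re-integrating $a$ recovers $\gamma$ by uniqueness of solutions to the defining ODE, hence returns $g$; conversely starting from an $A$-path and integrating then differentiating gives back the same $A$-path on the nose.

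I expect the main obstacle to be the careful bookkeeping in the integration step and its invariance under homotopy — specifically, showing that the integral curve $\tilde\gamma$ of the left-translated (time-dependent) vector field exists on the full interval $[0,1]$ and that a homotopy of $A$-paths can be simultaneously integrated to a genuine homotopy of lifts keeping the endpoints fixed. This is the content that is genuinely about Lie theory for algebroids rather than formal nonsense, and it is where the hypothesis of simply connected source fibers is essential (otherwise one only gets a well-defined map to $\textup{hom}(T[0,1],\mathfrak{g})/\sim$ from the monodromy groupoid, not from $G$). Everything else — well-definedness of the forward map, the anchor condition making the translated derivative an $A$-path, and the two round-trip identities — is a routine consequence of uniqueness of ODE solutions and the definition of the Lie algebroid bracket via left translation. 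I would cite \cite{rui} for the existence/completeness statements and for the homotopy-invariance of $A$-path integration, and keep the write-up at the level of indicating these ingredients rather than reproving them.
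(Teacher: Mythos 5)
The paper does not actually prove this proposition; it states it and defers entirely to Crainic--Fernandes \cite{rui}. Your outline is precisely the standard argument from that reference (well-definedness via simple connectivity of the source fibers, the inverse via integrating the left-translated time-dependent vector field, homotopy invariance of the integration, and the two round-trip identities from uniqueness of ODE solutions), so in structure there is nothing to object to and nothing in the paper to compare it against beyond the citation.

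There is, however, one concrete misstep in your justification of the anchor condition. You claim the translated derivative is an algebroid path because ``$s\circ\gamma$ is constant and the anchor of the translated vector is \ldots hence zero.'' The base path of the $A$-path $a(t)=(\gamma(t))^{-1}\cdot d\gamma(\partial_t)$ is \emph{not} $s\circ\gamma$ (which is indeed constant) but $t\circ\gamma$, since left translation by $(\gamma(t))^{-1}$ carries $\gamma(t)$ to the identity over $t(\gamma(t))$, so $a(t)\in\mathfrak{g}_{t(\gamma(t))}$. The anchor condition to verify is
\begin{equation*}
\alpha(a(t))=\frac{d}{dt}\bigl(t\circ\gamma\bigr)(t)\;,
\end{equation*}
and neither side is zero in general. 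It holds because the target map is invariant under left translation, $t(g_1\cdot g_2)=t(g_2)$, so $dt\bigl((\gamma(t))^{-1}\cdot d\gamma(V)\bigr)=dt(d\gamma(V))=d(t\circ\gamma)(V)$, and $\alpha=dt$ on $\mathfrak{g}=\mathrm{id}^*\ker(ds)$. This is still a one-line fix, but as written your reasoning identifies the wrong base path and would not survive scrutiny. The rest of the proposal --- in particular flagging that completeness of the lift on all of $[0,1]$ and the compatibility of $A$-homotopy with homotopy of lifts rel endpoints are the genuinely Lie-theoretic inputs to be quoted from \cite{rui} --- is accurate and appropriately calibrated.
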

\subsection{Index of Notation}\label{a4}
\begin{enumerate}
\item $C^n(G), C_0^n(G)$ are the subspaces of (normalized) smooth n-cochains on $G,$ \ref{normc}
\item $C^n(\mathfrak{g})$ are pointwise multilinear maps on $\mathfrak{g},$ \ref{multil}
\item $\Lambda^nG^*$ is the subspace of completely antisymmetric n-cochains, \ref{antic}
\item $S_0^nG^*$ is the subspace of normalized, completely antisymmetric n-cochains, \ref{well}
\item  $\mathcal{A}^n_0G^*=S^{n}_0G^*\oplus\Lambda^{n} G^*$ is the subspace of normalized $n$-cochains invariant under even permutations, \ref{even}
    \item $G_{\textup{loc}}$ is a local Lie groupoid, \ref{locg}
    \item $\textup{Pair}\,\textup{M}_{\textup{loc}}$ is the local pair groupoid, \ref{locp}, \ref{pair}
    \item $\delta^*, d$ are the Lie groupoid and Lie algebroid differentials, respectively, \ref{diffg}, \ref{diffa}
    \item $\mathcal{T}_M$ is the set of all smooth triangulaions of $M,$ $|\Delta_M|$ is a triangulation of $M,$ $\Delta_M$ is the associated simplicial complex/simplicial set, \ref{trian}
    \item $VE_{\bullet}$ is the graded van Est map, \ref{ve1}, \ref{ve2}
    \item $\mathcal{H}_i(TX)$ are pointwise homogeneous maps of degree $i,$ \ref{pointh}
    \item $\Pi_1(M)$ is the fundamental groupoid, \ref{fundg}

\end{enumerate}
\end{appendices}

\end{document}